\newtheorem{thm}{Theorem}[section]
\newtheorem{Def}[thm]{Definition}
\newtheorem{lem}[thm]{Lemma}
\newtheorem{prop}[thm]{Proposition}
\newtheorem{cor}[thm]{Corollary}
\theoremstyle{definition}
\newcommand{\N}{\ensuremath{\mathbb N}}
\newcommand{\R}{\ensuremath{\mathbb{R}}}
\newcommand{\MM}{\ensuremath{\mathbf{M}}}
\newcommand{\FF}{\ensuremath{\mathbf{F}}}
\newcommand{\II}{\ensuremath{\mathbf{I}}}
\newcommand{\XX}{\ensuremath{\mathbf{X}}}
\newcommand{\NN}{\ensuremath{\mathbf{N}}}
\newcommand{\TT}{\ensuremath{\mathbf{T}}}
\newcommand{\RR}{\ensuremath{\mathbf{R}}}
\newcommand{\pS}{\ensuremath{\mathbf{S}}}
\newcommand{\QQ}{\ensuremath{\mathbf{Q}}}
\newcommand{\JJ}{\ensuremath{\mathbf{J}}}
\newcommand{\YY}{\ensuremath{\mathbf{Y}}}
\newcommand{\ZZ}{\ensuremath{\mathbf{Z}}}
\newcommand{\Ms}{\mathcal{M}}
\newcommand{\bo}{\mathrm{O}}
\newcommand{\Var}{\mathrm{Var}}
\newcommand{\Cov}{\mathrm{Cov}}
\newcommand{\sg}{\mathrm{sg}}   
\newcommand{\E}[1]{\ensuremath{\mathbb{E} \left[#1 \right]}}
\newcommand{\Prob}[1]{\ensuremath{\mathbf{P} \left(#1 \right)}}
\newcommand{\I}[1]{\ensuremath{\mathbf{1}_{ \left \{ #1 \right \} }}}
\newcommand{\Law}{\mathcal{L}}
\newcommand{\Do} {\mathcal{D}[0,1]}
\newcommand{\Dy} {\mathscr{D}}
\begin{document}

\title{\bf A Gaussian limit process for optimal  FIND algorithms}
\author{Henning Sulzbach\footnote{School of Computer Science,
McGill University,
3480 University Street,
Montreal, Canada H3A 0E9}
\and
Ralph Neininger\footnote{Institute for Mathematics,
J.W.~Goethe University,
60054 Frankfurt am Main,
Germany}  \and
Michael Drmota\footnote{Institute for Discrete Mathematics and Geometry, TU Vienna, A-1040 Vienna, Austria}
}

\maketitle

\begin{abstract}
 We consider versions of the  FIND algorithm  where the pivot element used is the median of a
subset  chosen uniformly at random from the data. For the median selection we assume that subsamples of size asymptotic to  $c \cdot n^\alpha$ are chosen, where
$0<\alpha\le \frac{1}{2}$, $c>0$ and $n$ is the size of the data set to be split.
We consider the complexity of FIND as a process in the rank to be selected and measured by the number of
key comparisons required.  After normalization we show weak convergence of the complexity to a centered Gaussian process as $n\to\infty$, which depends only on $\alpha$. The proof relies on a
contraction argument for probability distributions on c{\`a}dl{\`a}g functions.
We also identify the covariance function of the Gaussian limit process and discuss path and tail properties.
\end{abstract}

\noindent
{\em  AMS 2010 subject classifications.} Primary 60F17, 68P10; secondary 60G15, 60C05, 68Q25.\\
{\em Key words.} FIND algorithm, Quickselect, complexity, key comparisons, functional limit theorem, contraction method, Gaussian process.

\section{Introduction}
The FIND algorithm is a selection algorithm, also called Quickselect, to find an element of given rank $\ell$ in a set $S$ of data, where the data set $S$  is a subset of finite cardinality $|S|$ of some ordered set. We have $\ell\in\{1,2,\ldots,|S|\}$ and assume that the data are  distinct. The algorithm was  introduced by Hoare \cite{ho61}.

FIND is a one-sided version of the well-known sorting algorithm Quicksort. It works recursively by first choosing one element $p\in S$, called the pivot element, and generating two subsets $S_<$ and $S_>$, where $S_<:=\{s\in S\,|\, s<p\}$ and $S_>:=\{s\in S\,|\, s>p\}$.
If $\ell=|S_<|+1$ then the pivot element is the rank $\ell$ element to be selected and the algorithm stops. Otherwise, if $\ell\le |S_<|$ it is recursively applied to $S_<$, if $\ell\ge |S_<|+2$ it is recursively applied the $S_>$ searching for rank $\ell-|S_<|-1$. This is called the $3$-version of the algorithm, since the first partitioning step leads to three cases. A variant is the $2$-version, where in the first partitioning step the sets $S_\le:=\{s\in S\,|\, s\le p\} $ and $S_>$ are generated. Note that we have $p\in S_\le$. We ignore the case where the pivot element is the rank $\ell$ element and recursively apply the algorithm to the subset among $S_\le$ and  $S_>$ where the rank $\ell$ element is contained.
Actually we will discuss both versions of the algorithm.

This specifies the FIND algorithm except for the choice of the pivot element in the partitioning step.
It can be chosen as the first element of $S$,
if $S$ is given as a list (vector) so that a first
element is well-defined, it can as well be chosen
uniformly at random from $S$. In order to obtain
better balanced subsets $S_<$ and $S_>$, respectively
$S_\le$ and $S_>$, one may first choose a subset $M$ of
odd cardinality $k$ from $S$ and use the median of
$M$ as pivot element. This version is called the
median-of-$k$ FIND algorithm. Here $k$ is fixed in
advance and constant until the algorithm has
performed all recursive calls and stops. A
variant of FIND, which is discussed  in the
present paper, consists of letting $k=k(n)$ depend
on $n$ so that $1\le k(n)\le n$ is odd and grows
asymptotically as $c \cdot n^\alpha$ where $c>0$,
$0<\alpha\le \frac{1}{2}$ and $n\to\infty$.  Note
that in a recursive call on some $S'\subset S$  the
subset of $S'$ to choose the median from is of the
 size $k(|S'|)$.  These routines turn out to be asymptotically optimal in a sense described below. First discussions of such
 versions can be found in \cite{gr99} and more systematically in \cite{maro01}.

The algorithmic  motivation
for this  version is to obtain even more
balanced sublists. This results in algorithms which
are efficient uniformly over the rank $\ell$, hence
they are reliable as universal algorithms to search
for any rank $1\le \ell\le n$. Note that one could
also adapt the algorithm to select particular ranks
$\ell$.    This is a different task; the literature
is reviewed below.

For our subsequent probabilistic analysis we assume that the data are random variables in the unit interval $[0,1]$, which are independent and identically distributed all with the uniform distribution on $[0,1]$. Note that all our results also hold for any deterministic set of data as long as the subset to select the pivot element in each step is chosen independently and uniformly from the set of data. In our probabilistic model we also assume that the subset for the pivot selection is chosen  independently from the data.

As a measure for the complexity we consider the number of key comparisons required by the  version of FIND.
We denote by $X_n^{(2)}(\ell)$ and $X_n^{(3)}(\ell)$ the number of key comparisons required when starting with a set of size $n$ and selecting the element with rank $1\le  \ell \le n$ using the $2$-version and $3$-version respectively. Note that the choice of $c$ and $\alpha$ as well as the particular choice of the median selection algorithm to find the pivot element within the subset
are suppressed in the notation. A median of a set can be found in time (i.e.~number of key comparisons) linear in the size of the set. It will later
 turn out that our results are independent of the choice of the median-selection algorithm to find the pivot element within the random subset as long
as mild assumptions are satisfied which are shared by standard median-selection algorithms (we could in fact use FIND itself in this step). We denote
the number of key comparisons needed to find the pivot as the median of a subset  of size $k=k(n)$ by $T_n$ and assume   for any $p \geq 1$ that
we have
\begin{align}\label{rn_con_med}
\|T_n\|_p=\bo(k(n)), \quad (n\to \infty),
\end{align}
where $\|X\|_p:=\E{|X|^p}^{1/p}$ denotes the $L_p$-norm of a random variable $X$ for $1\le p<\infty$. The  big-$\bo$ notation as well as other Bachmann--Landau symbols are used here and later on.

The rank parameter $\ell$ is subsequently also interpreted as a time parameter of a stochastic process and we denote $X^{(2)}_n:=(X^{(2)}_n(\ell))_{1\le \ell \le n}$ and $X^{(3)}_n:=(X^{(3)}_n(\ell))_{1\le \ell \le n}$.
 In Theorems \ref{rn_thm_in1} and \ref{rn_thm_in2} we state our main results about the asymptotic behavior of $X^{(2)}_n$ and $X^{(3)}_n$. Subsequently, we consider all appearing stochastic processes in time $t\in[0,1]$ with c{\`a}dl{\`a}g paths as random elements of the space $(\Do,d_{sk})$
 of c{\`a}dl{\`a}g functions on $[0,1]$ with the Skorokhod metric $d_{sk}$, see Billingsley \cite[Chapter 3]{Billingsley1999}.

\begin{thm} \label{rn_thm_in1}
Consider the process $X_n^{(2)}= (X_n^{(2)}(\ell))_{1\le \ell\le n}$ of the number of key comparisons needed by the $2$-version of the median-of-$k$ FIND algorithm with $k=k(n)\sim c n^\alpha$ with $c>0$ and $\alpha\in(0,\frac{1}{2}]$ and condition (\ref{rn_con_med}) for the pivot selection in the partitioning step. Then we have, as $n\to \infty$,  the weak convergence
 \begin{align*}
  \left(\frac{X_n^{(2)}(\lfloor tn \rfloor + 1)  -2n}{ n^{1-\alpha/2}/\sqrt{c}} \right)_{t\in[0,1]} \stackrel{d}{\longrightarrow} Z \quad \mbox{ in } \; (\Do,d_{sk}),
\end{align*}
where $Z=(Z_t)_{t\in[0,1]}$ is a centered Gaussian process depending on $\alpha$ with covariance function specified in Theorem \ref{rn_gau_pro} below (and where we set
by convention $X_n^{(2)}(n+1):=X_n^{(2)}(n)$).
\end{thm}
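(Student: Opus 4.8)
The plan is to set up a recursive distributional equation for the normalized process and apply the contraction method in the space of càdlàg functions. Write $Y_n^{(2)}(t) := \bigl(X_n^{(2)}(\lfloor tn\rfloor+1)-2n\bigr)/(n^{1-\alpha/2}/\sqrt c)$. The first partitioning step splits the data at the median of a random subset of size $k(n)\sim cn^\alpha$; conditionally on the sizes $(n_1, n_2)$ of the two resulting sublists (with $n_1+n_2 = n-1$ in the $3$-version, $n_1+n_2=n$ in the $2$-version), the algorithm recurses on exactly one of them, the one containing rank $\ell$. This yields, for a suitable split proportion $V_n \to V$ in distribution, a recursion of the schematic shape
\begin{align*}
Y_n^{(2)} \stackrel{d}{=} A_n(V_n)\, Y_{n_1}^{(2)}\!\circ (\text{time-rescaling}) \;\oplus\; A_n'(V_n)\, \widetilde Y_{n_2}^{(2)}\!\circ(\text{time-rescaling}) \;\oplus\; b_n,
\end{align*}
where the two families on the right are independent copies, $A_n, A_n'$ are deterministic scaling maps acting on the relevant subinterval of $[0,1]$ (indicator-type coefficients depending on whether $t$ falls left or right of $V_n$), and $b_n$ is a toll term coming from the $n + \bo(k(n)) = n + \bo(n^\alpha)$ comparisons used in the split plus the median-finding cost $T_n$. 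First I would pin down the limiting split distribution: since the pivot is the median of $\sim cn^\alpha$ uniform samples, $V_n$ concentrates around $1/2$ at scale $n^{-\alpha/2}$, and in fact $\sqrt{cn^\alpha}(2V_n-1)$ converges to a standard Gaussian (a CLT for the sample median, e.g. via the Bahadur representation); this Gaussian fluctuation, propagated through the recursion, is exactly what produces the $n^{1-\alpha/2}$ normalization and the $\alpha$-dependent limit.

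Next I would carry out the following steps. (i) Derive the exact recursion for $\E{X_n^{(2)}(\ell)}$ and show the mean is $2n + \bo(n^{1-\alpha/2})$ uniformly in $\ell$, so that the centering by $2n$ is correct and the residual mean contributes nothing in the limit; this is where assumption (\ref{rn_con_med}) and the growth $k(n)\sim cn^\alpha$ get used. (ii) Identify the limit map: the limiting recursive equation should read $Z \stackrel{d}{=} \mathbf 1_{[0,U]}(\cdot)\, Z'\bigl(\tfrac{\cdot}{U}\bigr) + \mathbf 1_{(U,1]}(\cdot)\, Z''\bigl(\tfrac{\cdot-U}{1-U}\bigr) + G$ — no, more precisely with $U\equiv 1/2$ deterministically in the limit for the $t$-support but with a Gaussian toll $G$ capturing the $\sqrt{cn^\alpha}(2V_n-1)$ fluctuation scaled appropriately — and I would show this fixed-point equation has a unique centered solution with finite second moments in $(\Do,d_{sk})$ by a Banach fixed-point argument in an appropriate Zolotarev-type or minimal $L_2$ metric on probability measures on $\Do$. (iii) Verify the contraction: the Lipschitz constant of the limiting map in the chosen metric is $<1$ because the two scaling coefficients are supported on complementary halves of $[0,1]$ (so their squared-$L_2$ contributions sum to roughly $2\cdot(1/2) < 1$ after the time-rescaling Jacobian is accounted for), giving a strict contraction. (iv) Check the transfer conditions: $A_n(V_n)\to A(U)$, $b_n \to G$ in $L_2$, the $\bo(n^{\alpha})$ toll is negligible after normalization, and the remainder terms from recursing on $n_i$ of size $\approx n/2$ are controlled — this requires a tightness/regularity estimate showing the normalized processes don't oscillate too wildly, typically via a moment bound on increments $\|Y_n(t)-Y_n(s)\|_2$ that is uniform in $n$. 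Then the contraction method (in the càdlàg-function version, as in the referenced framework) yields $Y_n^{(2)}\to Z$ weakly in $(\Do,d_{sk})$, and $Z$ is Gaussian because the fixed-point equation is linear and driven by a Gaussian toll, with the covariance pinned down by the fixed-point identity (matching Theorem \ref{rn_gau_pro}).

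The main obstacle I expect is step (iv), specifically establishing enough uniform regularity of the prelimit processes to get convergence in the Skorokhod topology rather than merely finite-dimensional convergence, and handling the fact that the relevant scale is $n^{-\alpha/2}$ rather than $n^{-1/2}$: the fluctuation of the split point is much larger than in plain Quickselect, so the "toll" term $b_n$ is itself a nondegenerate random process (essentially $(2V_n-1)$ times a known deterministic profile in $t$), and one must show this converges jointly with the recursive parts. A secondary technical point is the boundary case $\alpha = 1/2$, where $k(n)\sim c\sqrt n$ and the subsample size is comparable to $\sqrt n$; here one must check that the median-of-$k$ CLT and the moment estimate (\ref{rn_con_med}) still combine to give the clean Gaussian limit with no correction, and that the recursion does not degenerate (the sublists of size $\approx n/2$ still have subsample size $\sim c(n/2)^{1/2}$, consistent with the scaling). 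The choice of metric on $\mathcal P(\Do)$ — it must simultaneously metrize a useful form of weak convergence, be complete, and make the limit map a contraction despite the time-rescaling — is the other delicate ingredient; a weighted minimal $L_p$ metric with a supremum norm on $[0,1]$ is the natural candidate.
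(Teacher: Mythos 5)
Your overall strategy coincides with the paper's: derive the distributional recursion for the normalized process (with both sublists contributing, indicator coefficients left/right of the split, and a toll that is essentially $(2I_n-n)/n^{1-\alpha/2}$ times a step profile), identify the limiting fixed-point equation with \emph{deterministic} split at $1/2$ and Gaussian toll $N\cdot\sg$, and run a contraction argument on probability measures on $\Do$. Two smaller remarks first. Your step (i) (a separate analysis of $\E{X_n^{(2)}(\ell)}$ to justify the centering) is unnecessary: plugging the centering $-2n$ into the recursion gives the exact algebraic identity, with toll $T_n-k+\I{t<I_n/n}(2I_n-n)+\I{t\ge I_n/n}(n-2I_n)$, so no mean estimate is needed for the $2$-version (the paper only needs mean asymptotics later, to transfer to the $3$-version). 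Your contraction constant ``roughly $2\cdot(1/2)<1$'' is not quite right; the correct computation uses that the two pieces have disjoint supports, so squared sup-norms add, giving the factor $2\cdot(1/2)^{2-\alpha}=2^{\alpha-1}<1$ (and $2^{1/p-(1-\alpha/2)}<1$ exactly when $p>p_\alpha=2/(2-\alpha)$).

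The genuine gap is your step (iv), which you correctly flag as the main obstacle but for which your proposed remedy would not work. You suggest establishing tightness in $(\Do,d_{sk})$ via uniform moment bounds on increments $\|Y_n(t)-Y_n(s)\|_2$. But the limit $Z$ has jumps at \emph{every} dyadic point, and its increments do not vanish as $|t-s|\to 0$: by Theorem \ref{rn_gau_pro}, $\E{(Z(t)-Z(s))^2}=\gamma\kappa^{j(s,t)}$, which stays bounded away from $0$ for, say, $s=1/2-\varepsilon$, $t=1/2+\varepsilon$. So no Kolmogorov--Chentsov/Billingsley-type criterion in the Euclidean metric on $[0,1]$ can apply, and a direct tightness argument in $J_1$ would have to control the location of infinitely many jumps of random positions --- this is precisely the difficulty the paper's proof is built to avoid. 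The paper's device is a coupling: all split variables, normal variables and toll variables are realized on one probability space (via Skorokhod representation for $(2I_n-n)/n^{1-\alpha/2}\to N$), and two auxiliary processes $\QQ_n$ and $\RR_n$ are introduced whose jump locations are \emph{identical} to those of $\YY_n$ by construction. The distances $\YY_n\to\QQ_n$ (replace the prelimit toll by its Gaussian limit) and $\QQ_n\to\RR_n$ (replace the random scaling factors $(\JJ_n/n)^{1-\alpha/2}$ by $(1/2)^{1-\alpha/2}$) are then shown to vanish in the \emph{supremum} norm by the contraction recursion, with no topology issues at all. Only in the last step is the Skorokhod metric used: a single explicit piecewise-linear time change $\lambda_n$ aligns the finitely many split points of the first $n_1$ recursion levels of $\RR_n$ with the dyadic jump points of $\ZZ_{n_1}$, and the uniform convergence $\ZZ_{n_1}\to\ZZ$ from the construction of the limit absorbs the rest. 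Without this (or an equivalent mechanism for matching jump locations), your outline establishes at best finite-dimensional convergence, not convergence in $(\Do,d_{sk})$.
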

Our main convergence result for the $3$-version is the weak convergence of all finite dimensional marginals, denoted by $\stackrel{fdd}{\longrightarrow}$, for the analogously normalized  process  to the corresponding marginals of the Gaussian process of Theorem \ref{rn_thm_in1}.
\begin{thm} \label{rn_thm_in2}
Consider the process $X_n^{(3)}= (X_n^{(3)}(\ell))_{1\le \ell\le n}$ of the number of key comparisons needed by the $3$-version of the median-of-$k$ FIND algorithm with $k=k(n)\sim c n^\alpha$ with $c>0$ and $\alpha\in(0,\frac{1}{2}]$ and condition (\ref{rn_con_med}) for the pivot selection in the partitioning step. Then we have, as $n\to \infty$,   convergence of the finite dimensional marginals,
 \begin{align*}
  \left(\frac{X_n^{(3)}(\lfloor tn \rfloor + 1)  -2n}{ n^{1- \alpha/2}/\sqrt{c}} \right)_{t\in[0,1]} \stackrel{fdd}{\longrightarrow} Z,
\end{align*}
where $Z=(Z_t)_{t\in[0,1]}$ is the centered Gaussian process of Theorem \ref{rn_thm_in1}
 (and where we set
by convention $X_n^{(3)}(n+1):=X_n^{(3)}(n)$).
\end{thm}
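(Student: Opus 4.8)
The plan is to derive Theorem~\ref{rn_thm_in2} from Theorem~\ref{rn_thm_in1} by comparing the $3$-version with the $2$-version on a common probability space. Realize both $X_n^{(2)}$ and $X_n^{(3)}$ on the same data and let the two executions use, at each recursive call, the same random subset for the pivot selection, matched through the recursion in the natural way. The two runs then perform the same comparisons up to the first recursive call at which the chosen pivot happens to be the element of the rank currently searched for; at that call the $3$-version stops, whereas the $2$-version keeps this pivot inside its left subproblem and continues searching for it, now as the maximal element of a set whose cardinality equals the current rank. Writing $D_n(\ell):=X_n^{(2)}(\ell)-X_n^{(3)}(\ell)\ge 0$, we obtain $D_n(\ell)=\mathbf{1}_{A_n(\ell)}W_n(\ell)$, where $A_n(\ell)$ is the event that such a coincidence occurs along the search path, $R_n(\ell)$ is the size of the subproblem at which it occurs, and, conditionally on $\{R_n(\ell)=r\}$, the overhead $W_n(\ell)$ is distributed as the number of key comparisons of a $2$-version run on a uniform set of size $r$ searching for its maximum.

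Since Theorem~\ref{rn_thm_in1} already gives joint convergence of any finite collection of marginals of the normalized $X_n^{(2)}$ (every fixed time being a.s.\ a continuity point of the limit), and since marginal convergence in probability to constants is automatically joint, Slutsky's lemma reduces Theorem~\ref{rn_thm_in2} to showing that $D_n(\lfloor tn\rfloor+1)/n^{1-\alpha/2}\to 0$ in probability for each fixed $t$. For this I would use, first, the uniform linear estimate $\sup_{1\le \ell\le m}\E{X_m^{(2)}(\ell)}=\bo(m)$ available from the moment bounds behind Theorem~\ref{rn_thm_in1}, which gives $\E{W_n(\ell)\mid R_n(\ell)}=\bo(R_n(\ell))$; and, second, a tail bound on $R_n(\ell)$. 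The key point for the latter is that a coincidence at a subproblem of size $m$ forces the rank then searched for to lie within $\bo(m/\sqrt{k(m)})$ of $m/2$, since the rank of the median of a uniform sample of size $k(m)$ among $m$ points concentrates on that scale; a local bound for this rank distribution makes the conditional probability of a coincidence at a level of size $m$ at most $\bo(\sqrt{k(m)}/m)=\bo(m^{\alpha/2-1})$, and summing over the $\bo(\log n)$ recursion levels of size $\ge s$ yields $\Prob{R_n(\ell)\ge s}=\bo(s^{\alpha/2-1}\log n)$. Taking $s=\delta n^{1-\alpha/2}$, this tends to $0$ since $(1-\alpha/2)(\alpha/2-1)<0$, while on $\{R_n(\ell)\le s\}$ a Markov bound controls $W_n(\ell)/n^{1-\alpha/2}$ up to $\bo(\delta)$; letting $\delta\downarrow 0$ finishes the argument.

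The main difficulty lies in the two estimates just indicated: the tail bound on $R_n(\ell)$, ensuring that an early termination affects only small subproblems, and the verification that the slight desynchronization of the coupling --- the $2$-version dragging each pivot into its left subproblem, thereby perturbing the size of the next sample and of later subproblems --- produces only lower-order corrections that do not accumulate over the $\bo(\log n)$ recursive calls. A self-contained alternative that avoids the coupling is to rerun the contraction argument of Theorem~\ref{rn_thm_in1} on the finite-dimensional projections, i.e.\ for $\R^d$-valued instead of $(\Do,d_{sk})$-valued random variables: the recursive distributional equation for $X_n^{(3)}$ has the same limiting fixed point, and over $\R^d$ one does not need the modulus-of-continuity control whose failure is the reason full process convergence is not claimed for the $3$-version.
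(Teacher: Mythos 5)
Your overall reduction is the same as the paper's: both arguments deduce Theorem~\ref{rn_thm_in2} from Theorem~\ref{rn_thm_in1} by showing that the (nonnegative) discrepancy between the two versions is $o_P(n^{1-\alpha/2})$ at each fixed time, and then applying Slutsky. The paper implements this via Lemma~\ref{lem3} in the Appendix, which bounds $\sup_\ell\bigl(\E{X_n^{(2)}(\ell)}-\E{X_n^{(3)}(\ell)}\bigr)=\bo(n^{\alpha}\sqrt{\log n})=o(n^{1-\alpha/2})$ and concludes by a first-moment argument.

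The genuine gap in your version is the claimed identity $D_n(\ell)=\mathbf{1}_{A_n(\ell)}W_n(\ell)$, i.e.\ the assertion that the two runs ``perform the same comparisons'' up to the first coincidence of the pivot with the sought rank. Under any natural coupling this fails already after the first left-recursion: the $3$-version recurses on $S_<$ of size $I_n-1$ while the $2$-version recurses on $S_\le$ of size $I_n$, so the subsequent subsample sizes $k(I_n-1)$ versus $k(I_n)$, hence the random subsets, pivots, and all later comparison counts, differ — not only on the coincidence event. You name this ``desynchronization'' as the main difficulty but offer no argument for why it contributes only lower-order corrections, and that is precisely where all the technical work lies. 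The paper handles it not by a pathwise coupling but by comparing the two recurrences for the means, \eqref{rec:mean2} and its $3$-version analogue, where the off-by-one shows up as a sum of terms $|\Prob{I_n=i+1}-\Prob{I_n=i}|\,c_i^{(3)}(\ell)$; the key estimate is the local smoothness bound $\Prob{I_n=i+1}-\Prob{I_n=i}=\bo(n^{\alpha-2}\sqrt{\log n})$, proved from the explicit hypergeometric-type weights \eqref{weights} together with Bernstein-type concentration for $M_n$, followed by an induction on $n$. Without an analogue of this estimate (or some other quantitative control of the size perturbation accumulating over the recursion), your decomposition does not yield a proof. Your fallback suggestion of rerunning the contraction argument on finite-dimensional projections is also not a routine modification: the recurrence evaluates the subprocesses at random times $\ell/(I_n-1)$ and $(\ell-I_n)/(n-I_n)$, so the projections do not close as an $\R^d$-valued recursion without an additional continuity argument for the limit at the (random) evaluation points.
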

Some additional related results are stated in Corollary \ref{cormain2}.

As observed by Gr\"ubel \cite{gr99}, for the worst-case behavior of any version of FIND, we have $$ \liminf_{n \to \infty}
\sup_{1 \leq \ell \leq n} \frac{X_n^{(3)}(\ell)}{n} \geq 2.$$ Moreover,
Gr\"ubel \cite[Theorem 5]{gr99} notes that $\frac 1 n \sup_{1 \leq \ell \leq n} X_n^{(3)}(\ell) \to 2$ in probability for any median-of-$k$
FIND variant with $k = k(n) \to \infty$ and $k = o(n/\log n)$. Hence, the algorithms investigated in the present work are asymptotically optimal with respect to the
worst-case behavior.  The following theorem gives more precise information.
\begin{thm} \label{thm_sup}
 As $n \to \infty$, with convergence of all moments, we have
$$\frac{\sup_{1 \leq \ell \leq n} X^{(3)}_n(\ell) - 2n }{n^{1-\alpha/2}/\sqrt{c}} \to \sup_{t \in [0,1]} Z(t),$$
where $Z(t)$ is the process of Theorem \ref{rn_thm_in1}.
The same result holds for the $2$-version.
\end{thm}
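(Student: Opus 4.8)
\textbf{Proof plan for Theorem~\ref{thm_sup}.}
The plan is to establish the statement first for the $2$-version, where the functional limit theorem of Theorem~\ref{rn_thm_in1} is available, and then to transfer it to the $3$-version by comparing the two algorithms. Since the supremum functional $\Phi(f)=\sup_{t\in[0,1]}f(t)$ is continuous on $(\Do,d_{sk})$, Theorem~\ref{rn_thm_in1} and the continuous mapping theorem give
\begin{align*}
\frac{\sup_{1\le\ell\le n}X_n^{(2)}(\ell)-2n}{n^{1-\alpha/2}/\sqrt c}\;\convdist\;\sup_{t\in[0,1]}Z(t).
\end{align*}
To upgrade this to convergence of all moments it suffices, by uniform integrability, to show $\lVert\sup_{1\le\ell\le n}|X_n^{(2)}(\ell)-2n|\rVert_q=\bo(n^{1-\alpha/2})$ for every $q\ge1$; I would obtain this from a maximal inequality fed by the increment moment bounds already used to prove tightness in Theorem~\ref{rn_thm_in1} (estimates of the form $\lVert X_n^{(2)}(\ell)-X_n^{(2)}(\ell')\rVert_q=\bo(n^{1-\alpha/2}(|\ell-\ell'|/n)^{\gamma})$ for some $\gamma>0$, together with $\lVert X_n^{(2)}(1)-2n\rVert_q=\bo(n^{1-\alpha/2})$), via a chaining argument. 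Convergence in distribution together with this uniform integrability then gives convergence of all moments for the $2$-version.

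For the $3$-version the normalized process does \emph{not} converge in $(\Do,d_{sk})$: whenever a rank $\ell$ was used as a pivot somewhere in the recursion the algorithm stops early, producing a downward spike of depth of order $n$, i.e.\ of order $n^{\alpha/2}\to\infty$ after normalization, so no continuous mapping theorem applies. These spikes point downward and are thus irrelevant for the supremum. Writing $\mathcal T_n$ for the recursion tree obtained by pivoting the whole data set down to singletons, one has $X_n^{(3)}(\ell)=\sum_{u\preceq v(\ell)}(|S_u|-1+T_{|S_u|})$, where $v(\ell)$ is the node whose pivot has rank $\ell$; since node costs are nonnegative, the maximum over $\ell$ is attained at a leaf, and descending always into the larger child shows $\sup_{1\le\ell\le n}X_n^{(3)}(\ell)\ge2n-\bo(\log n)$ deterministically. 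The corresponding identity for the $2$-version expresses $\sup_\ell X_n^{(2)}(\ell)$ as a maximal root-to-leaf cost in the slightly fatter $2$-version tree, whose size-$m$ nodes have children of sizes $j+1$ and $m-1-j$ rather than $j$ and $m-1-j$. Constructing the two trees on the same data and the same pivot selections --- using that $k(\cdot)$ is slowly varying, so the subsample sizes at corresponding nodes agree up to a negligible amount --- one shows
\begin{align*}
0\ \le\ \sup_{1\le\ell\le n}X_n^{(2)}(\ell)-\sup_{1\le\ell\le n}X_n^{(3)}(\ell)\ =\ R_n,\qquad \lVert R_n\rVert_q=o(n^{1-\alpha/2})\ \text{ for all }q\ge1,
\end{align*}
the discrepancy coming only from the extra pivot carried on each left branch and from the median-finding costs, both accumulating to $\bo((\log n)^2+n^{\alpha})$ in $L_q$.

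Granting this comparison, the upper bound transfers from the $2$-version, $\limsup_n\mathbf P\bigl(\tfrac{\sup_\ell X_n^{(3)}(\ell)-2n}{n^{1-\alpha/2}/\sqrt c}>x\bigr)\le\mathbf P(\sup_tZ(t)>x)$, and $\lVert(\sup_\ell X_n^{(3)}(\ell)-2n)^+\rVert_q=\bo(n^{1-\alpha/2})$; with the deterministic lower bound this yields uniform integrability of all powers of $\tfrac{\sup_\ell X_n^{(3)}(\ell)-2n}{n^{1-\alpha/2}/\sqrt c}$. The matching lower bound comes from Theorem~\ref{rn_thm_in2}: for every finite grid $G\subset[0,1]$ with $1\in G$, $\max_{t\in G}\tfrac{X_n^{(3)}(\lfloor tn\rfloor+1)-2n}{n^{1-\alpha/2}/\sqrt c}\convdist\max_{t\in G}Z(t)$, so $\liminf_n\mathbf P(\,\cdot\,>x)\ge\mathbf P(\max_{G}Z>x)$, and letting $G$ increase to a dense set and using that $Z$ has c{\`a}dl{\`a}g paths, $\mathbf P(\max_{G}Z>x)\uparrow\mathbf P(\sup_{[0,1]}Z>x)$. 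The two bounds give convergence in distribution (the limit law being non-atomic), and with the uniform integrability, convergence of all moments; the same reasoning applies to the $2$-version, where the comparison step is not needed.

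The step I expect to be the main obstacle is the comparison bound on $R_n$. Because the $3$-version has no functional scaling limit, a direct oscillation estimate is hopeless (the oscillation really is of order $n^{\alpha/2}$ after normalization), so everything rests on showing that the \emph{maximal} root-to-leaf cost in the $3$-version recursion tree differs from that in the $2$-version tree by a lower-order amount, uniformly in $n$ and in every $L_q$; this requires a careful simultaneous construction of the two differently shaped trees and control of how the ``extra pivot per left branch'' and the median-finding costs propagate toward the deepest leaves. As an alternative to the comparison one can apply the contraction method directly to $M_n:=\sup_\ell X_n^{(3)}(\ell)$: a one-step analysis gives $M_n=(n-1+T_n)+\max(M_{j},M'_{n-1-j})$ with $j+1$ the rank of the top pivot, and after normalization this converges to the unique finite-moment fixed point of $\mathcal Y\stackrel{d}{=}\max\bigl(\xi+2^{-(1-\alpha/2)}\mathcal Y^{(1)},-\xi+2^{-(1-\alpha/2)}\mathcal Y^{(2)}\bigr)$ with $\xi\sim\mathcal N(0,1)$; identifying this limit with $\sup_tZ(t)$ then still requires the comparison above or a recursive self-similarity of the process $Z$.
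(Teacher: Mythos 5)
Your treatment of the $2$-version matches the paper's: continuous mapping for the distributional limit, plus $L_q$-boundedness of the normalized supremum for uniform integrability. The only difference is how that boundedness is obtained — the paper does not need a chaining/maximal-inequality argument, because the coupling used to prove Theorem~\ref{rn_thm_in1} already yields $\sup_n \E{\|\YY_n\|^m}<\infty$ for every $m$ by a direct induction on the recursive definition of $\YY_n$ (this is recorded in the proof of Corollary~\ref{cormain2}). Your chaining route would presumably also work but is heavier than necessary.

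For the $3$-version, your primary route — a pathwise coupling of the two recursion trees giving $0\le \sup_\ell X_n^{(2)}(\ell)-\sup_\ell X_n^{(3)}(\ell)=R_n$ with $\|R_n\|_q=o(n^{1-\alpha/2})$ — is a genuine gap, and you are right to flag it as the main obstacle. The difficulty is not just bookkeeping of the ``extra pivot per left branch'': after the first split the left subproblems have sizes $I_n$ and $I_n-1$, so the subsample sizes $k(I_n)$ and $k(I_n-1)$ and the sampled subsets themselves differ, the pivots downstream diverge, and the two trees cease to be comparable node by node; even the sign of the difference of the two maxima is then unclear pathwise. The paper makes no attempt at such a coupling (its only $2$-vs-$3$ comparison, Lemma~\ref{lem3}, is at the level of means). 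Instead, the paper's actual proof for the $3$-version is exactly your stated alternative: the one-step recurrence $W_n \stackrel{d}{=} \max(W_{I_n-1},\widetilde W_{n-I_n})+n-k+T_n$ for $W_n=\sup_\ell X_n^{(3)}(\ell)$, to which the contraction method for max-type recurrences (Theorem 4.6 of \cite{Ru06}) applies directly and yields convergence, with all moments, to the unique fixed point of the map $\mu\mapsto \mathcal{L}\bigl((\kappa^{1/2}X_0+N)\vee(\kappa^{1/2}X_1-N)\bigr)$. The ``recursive self-similarity of $Z$'' you say is still needed to identify this fixed point with $\mathcal{L}(\sup_t Z(t))$ is supplied by Corollary~\ref{cor_sup}, which derives the almost sure identity $S=(\kappa^{1/2}S^{0}+N_\epsilon)\vee(\kappa^{1/2}S^{1}-N_\epsilon)$ from the explicit construction of $Z$ and proves uniqueness of the fixed point in $\Ms_p(\R)$ for $p>p_\alpha$. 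So the correct reading of your proposal is: discard the tree comparison, promote the alternative to the main argument, and supply the self-similarity of $\sup Z$ as a separate (easy) corollary of the construction of $Z$ — which is precisely the paper's structure.
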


In the classical case of FIND (by classical we mean with a uniformly chosen pivot element) a process convergence results for the number of
key comparisons (as in Theorems \ref{rn_thm_in1} and \ref{rn_thm_in2})
 has been obtained in the seminal paper of Gr\"ubel and R\"osler \cite{grro96}.
More precisely, if $X_n(\ell)$ denotes the number of key comparisons
(in the $2$-version) in classical FIND then
 \begin{align}\label{findgrro}
  \left(\frac{X_n(\lfloor tn \rfloor + 1)}{n} \right)_{t\in[0,1]} \stackrel{d}{\longrightarrow}
(\widetilde{Z}(t))_{t\in[0,1]} \quad \mbox{ in } \; (\Do,d_{sk}),
\end{align}
where $\widetilde Z:=(\widetilde{Z}(t))_{t\in[0,1]}$ satisfies the stochastic fixed point equation
\begin{align*}
(\widetilde{Z}(t))_{t\in[0,1]}\stackrel{d}{=} \left( 1 + {\bf 1}_{[0,U)}(t)\, U \widetilde Z_0\left( \frac{t}U \right) +
 {\bf 1}_{[U,1]}(t)\, (1-U) \widetilde Z_1\left( \frac{t-U}{1-U} \right)\right)_{t\in[0,1]}.
\end{align*}
Here, $\widetilde Z_0$ and $\widetilde Z_1$ have the same distribution as  $\widetilde Z$,
$U$ is uniformly distributed on $[0,1]$, and $\widetilde Z_0, \widetilde Z_1,U$
are independent. In \cite{grro96} also the difference between the $2$-version and
$3$-version is discussed regarding weak convergence in $(\Do,d_{sk})$ for the $2$-version, whereas for the $3$-version such a convergence does not hold. A similar behavior appears for our  FIND algorithm as reflected in Theorems \ref{rn_thm_in1} and \ref{rn_thm_in2}.

For the classical FIND  Paulsen \cite{pa97} studied variances and higher moments  in the setting of quantiles of \cite{grro96}.   Kodaj and M\'ori \cite{komo97} investigated rates of convergence for the marginals of the process.  Hwang and Tsai \cite{hwts02} considered the case $t=0$, i.e.~ranks of the form $\ell=o(n)$ and found (among other things) that here the limit distribution is the Dickman distribution. Note that this is the distribution of $\widetilde{Z}(0)$.

With respect to the one-dimensional marginals, Theorem \ref{rn_thm_in1} and Theorem \ref{rn_thm_in2} reveal that, asymptotically, both first and second order behavior of the considered complexities do not depend on $t \in [0,1]$. This stands in sharp contrast to the results for classical FIND (and median-of-$k$ FIND with $k > 1$ fixed reviewed below), as the distribution of $\widetilde{Z}(t)$ in \eqref{findgrro} depends on $t$.

Historically the mathematical analysis of classical FIND was initiated with an average case analysis for fixed ranks $\ell$ by Knuth \cite{kn72}. Variances were derived in Kirschenhofer and Prodinger \cite{kipr98}.

For mathematical analysis of median-of-$k$ versions of FIND with fixed $k$ not depending on the size of the input we refer to
Anderson and Brown \cite{anbr92}, Kirschenhofer, Mart{\'{\i}}nez and Prodinger \cite{kiprma97} and Gr\"ubel \cite{gr99}. A broad survey, also covering median-of-$k$ analysis is given in R\"osler \cite{ro04}.

A discussion of FIND versions with $k=k(n)$ depending on the size $n$ of the list to be split with respect to the worst-case behavior was given in Gr\"ubel \cite{gr99}. Mart{\'{\i}}nez and Roura \cite{maro01} give an average case analysis,
where optimal choices for the tradeoff between better balanced sublists versus additional
cost for the median selection are discussed. Note that another idea to adapt the FIND algorithm is to not
choose the median of a subsample but to choose an element that may depend on the rank $\ell$ searched for such that the sublist where the algorithm is recursively called may be small. This is investigated in Mart{\'{\i}}nez,  Panario and Viola \cite{mapavi10}, see also Knof and R\"osler \cite[pp. 151--153]{knro12}.

In various contributions also the number of key exchanges is studied which has to be compared with the number of key comparisons for a more realistic measure of complexity.
Corresponding limit distributions can be found in
Hwang and Tsai \cite{hwts02}, Knape and Neininger \cite[Section 5]{knne08}, Mahmoud \cite{ma09,ma10} and Dadoun and Neininger \cite{dane14}.

Another model for the rank searched for is to consider a random rank chosen uniformly and independently from the data and algorithm. So called grand averages where considered for key comparisons in
 Mahmoud, Modarres and Smythe \cite{momosm95},
  and, for a different version of the partitioning stage using two pivot elements, in Wild, Nebel and Mahmoud \cite{winema13}. For the number of key exchanges under grand averages see \cite{ma10, dane14}. Yet another complexity measure is the worst case complexity with worst case over the possible ranks, see Devroye \cite{de01b}.

Tail bounds for the number of key comparisons for the classical FIND were studied in  Devroye \cite{de84} and Gr\"ubel \cite{gr98}.

A fundamentally different cost measure arises when a key comparison is weighted by the number of bit comparisons needed to identify its result. The number of  bit comparisons was studied by Vall\'ee et al. \cite{vaclfifl09} and Fill and Nakama \cite{fina10,fina12}, see also Grabner and Prodinger \cite{grpr08}.

Finally we mention studies of exact simulation from distributions appearing as limit distributions in the analysis of FIND: Devroye \cite{de01a}, Fill and Huber \cite{fihu10}, Devroye and Fawzi \cite{defa10}, Devroye and James \cite{deja11} and \cite{blsi11,knne12,dane14}.\\

The techniques used to show convergence in Section \ref{rn_sec3} and to construct the limit process $Z$ in Section \ref{rn_sec21} are in the spirit of the contraction method.
(We refer to R\"osler and R\"uschendorf \cite{RoRu01} and Neininger and R\"uschendorf \cite{NeRu04} for an introduction and survey of the contraction method for univariate and finite-dimensional quantities.) In the last years a couple of general approaches have  been developed to show process convergences within the contraction method on different function spaces and in different topologies, see Eickmeyer and R\"uschendorf \cite{EiRu07}, Drmota, Janson and Neininger \cite{DrJaNe08}, Knof and R\"osler \cite{knro12}, Neininger and Sulzbach \cite{nesu12} and Ragab and R\"osler \cite{raro13}, as well as  the PhD theses of Knof \cite{kn07}, Ragab \cite{ra11} and Sulzbach \cite{SuDiss}.

The construction of the limit process $Z$ that we present in Section \ref{rn_sec21}
builds upon ideas of  Ragab and R\"osler \cite{raro13}. However, the convergence proof for
Theorem \ref{rn_thm_in1} yields weak convergence in $(\Do,d_{sk})$ which has to be compared with
the convergence of finite dimensional distributions shown for a related problem in \cite{raro13}.
Our approach to convergence is almost entirely based on contraction arguments on the level of the
supremum norm of processes and very little (deformation of time) is needed in addition  to align
jumps. Besides 
leading to comparatively strong results, we feel that the technique for
convergence  developed here   is   flexible and general  to be easily applicable to related
recursive problems.

A similar version of the Quicksort algorithm consists in also choosing the pivot element in each step  as a median of a random sub-sample of size $k = k(n) \sim c n^\alpha$ with $n$ the size of the list to be split. We conjecture that such a Quicksort algorithm admits a Gaussian limiting distribution for the normalized number of key comparisons. This would be in contrast to the well-known non-Gaussian limiting distribution
for classical Quicksort, see \cite{ro91}.

\bigskip

{\bf Plan of the paper.}
The paper is organized as follows. In Section \ref{rn_sec21} the limit process $Z$ is constructed and in Section \ref{rn_sec22} identified as a centered Gaussian process with explicitly given covariance function. Section \ref{rn_sec3} contains the asymptotic analysis of the complexity of the  median-of-$k$ FIND leading to the proofs of Theorems \ref{rn_thm_in1} and \ref{rn_thm_in2}. The organization of the proofs is outlined at the beginning of Section \ref{rn_sec3}.
In the final Section \ref{rn_sec4} we present properties of the limit process $Z$. In Subsections \ref{rn_sec41} and \ref{rn_sec42} path properties of $Z$ are discussed, Subsection \ref{rn_sec43} has a characterization and a tail bound  for the supremum of the limit process $Z$.  The Appendix is devoted to the proofs of two technical lemmata. The first, Lemma \ref{lem3}, allows the transfer of the results for the 2-version in Theorem \ref{rn_thm_in1} to the 3-version in Theorem \ref{rn_thm_in2}. The second, Lemma \ref{lem:helppq}, is needed in the study of the path variation of the limit process $Z$.

\bigskip
{\bf Acknowledgements:} We thank the referees for their careful reading and constructive remarks.

\section{Construction and characterization of the limit process}
We first construct and characterize the limit process $Z$ appearing in Theorems \ref{rn_thm_in1} and \ref{rn_thm_in2}. In this and the following section
we fix $\alpha \in (0,1/2]$ and suppress the dependence on $\alpha$ in the notation.
\subsection{Construction} \label{rn_sec21}
We consider the rooted complete infinite binary tree, where the root is labeled by the empty word $\epsilon$ and left and right children of a node labeled $\vartheta$ are labeled by the extended words $\vartheta 0$ and $\vartheta 1$ respectively. The set of labels is denoted by $\Theta:=\cup_{k=0}^\infty \{0,1\}^k$. The length $|\vartheta|$ of a label of a node is identical to the depth of the node in the rooted complete infinite binary tree.

We denote the supremum norm on $\Do$ by $\|\,\cdot\,\|$. For a random variable $X$ in $(\Do, d_{sk})$ and $1\le p<\infty$ we denote the $L_p$-norm by $\|X\|_p:=(\E{\|X\|^p})^{1/p}$.

For $u\in[0,1]$ we define linear operators
\begin{align*}
\mathfrak{A}_u,\mathfrak{B}_u: \Do \to \Do
\end{align*}
as follows. For $f\in \Do$ the c{\`a}dl{\`a}g functions $\mathfrak{A}_u(f)$ and $\mathfrak{B}_u(f)$ are defined as
\begin{align*}
t\mapsto \I{t <u} f \left(\frac t u \right),  \quad t\mapsto \I{t \geq u} f \left(\frac{1-t}{1-u}\right),
\end{align*}
respectively. Furthermore, we define the step function $\sg\!\!:[0,1]\to \R$ by $\sg(t)=\I{t < 1/2} - \I{t  \geq 1/2}$. Hence, $\sg$ is a shifted version the sign function, and it is in $\Do$.

For a given family $\{N_\vartheta\,|\, \vartheta\in\Theta\}$ of independent random variables in $\R$ each with the standard normal distribution
we recursively define a family $\{Z^\vartheta_n\,|\, \vartheta\in\Theta, n\in\N_0\}$ of random variables in $(\Do,d_{sk})$ as follows: We set $Z_0^\vartheta:= 0$ for all $\vartheta\in\Theta$. Assume, the $Z_n^\vartheta$ are already defined for an $n\ge 0$ and all $\vartheta\in\Theta$. Then for all  $\vartheta\in\Theta$ we set
\begin{align}\label{recdefproc}
Z_{n+1}^\vartheta := \left( \frac 1 2 \right)^{1-\alpha/2}
\mathfrak{A}_\frac{1}{2}( Z_n^{\vartheta 0})
+  \left( \frac 1 2 \right)^{1-\alpha/2} \mathfrak{B}_\frac{1}{2}(Z_n^{\vartheta 1}) +  N_\vartheta \cdot\sg.
\end{align}
We have the following asymptotic properties for the $Z_n^{\vartheta}$:
\begin{lem} \label{rnlem}
 Let $\{Z^\vartheta_n\,|\, \vartheta\in\Theta, n\in\N_0\}$ be a family as defined (\ref{recdefproc}).
Then, for each $\vartheta \in \Theta$, the sequence $(Z^\vartheta_n)_{n\ge 0}$ converges almost surely uniformly and in the $L_p$-norm for all $p \in \N$ to a limit c{\`a}dl{\`a}g process $Z^\vartheta$.
For all $\vartheta\in\Theta$ we have, almost surely,
\begin{align}\label{rn2}
Z^\vartheta = \left( \frac 1 2 \right)^{1-\alpha/2}
\mathfrak{A}_\frac{1}{2}( Z^{\vartheta 0}) +
\left( \frac 1 2 \right)^{1-\alpha/2} \mathfrak{B}_\frac{1}{2}(Z^{\vartheta 1}) +  N_\vartheta\cdot \sg.
\end{align}
The family $\{Z^\vartheta\,|\, \vartheta\in\Theta\}$ is identically distributed and all moments of the $\|Z^\vartheta\|$ are finite.
 \end{lem}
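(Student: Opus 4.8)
The plan is to set up a contraction argument in the complete metric space of $L_p$-integrable random variables in $(\Do, \|\cdot\|)$, exploiting that the recursion \eqref{recdefproc} is an affine map whose linear part is a strict contraction once $\alpha \le 1/2$. First I would observe that the operators $\mathfrak{A}_{1/2}$ and $\mathfrak{B}_{1/2}$ are isometries onto complementary ``halves'' of $[0,1]$: for any $f,g \in \Do$ one has $\|\mathfrak{A}_{1/2}(f) + \mathfrak{B}_{1/2}(g)\| = \max\{\|f\|, \|g\|\}$, because the supports $[0,1/2)$ and $[1/2,1]$ are disjoint and the time-rescalings are bijections. Hence, writing $\beta := (1/2)^{1-\alpha/2}$, and coupling two solution sequences $(Z_n^\vartheta)$ and $(\widetilde Z_n^\vartheta)$ driven by the \emph{same} normals $N_\vartheta$, we get the pathwise bound
\begin{align*}
\|Z_{n+1}^\vartheta - \widetilde Z_{n+1}^\vartheta\| \;\le\; \beta \,\max\bigl\{\|Z_n^{\vartheta 0} - \widetilde Z_n^{\vartheta 0}\|,\ \|Z_n^{\vartheta 1} - \widetilde Z_n^{\vartheta 1}\|\bigr\}.
\end{align*}
Since $0 < \alpha \le 1/2$ gives $\beta = 2^{-1+\alpha/2} \le 2^{-3/4} < 1$, iterating this bound over the binary tree will yield geometric decay.

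Next I would establish the almost sure and $L_p$ convergence. Taking $\widetilde Z_n^\vartheta := Z_{n-1}^\vartheta$ (i.e.\ comparing consecutive iterates) and writing $D_n := \sup_{\vartheta \in \Theta} \|Z_{n+1}^\vartheta - Z_n^\vartheta\|$ does not quite work uniformly over the infinite tree, so instead I would track, for fixed $\vartheta$, the quantity $a_n(p) := \sup_{\vartheta} \|\,\|Z_{n+1}^\vartheta - Z_n^\vartheta\|\,\|_p$. From the recursion one gets $a_{n+1}(p) \le \beta\, a_n(p)$ by the displayed contraction (the noise terms cancel since they are identical at each level), together with $a_0(p) = \| \,|N_\epsilon|\, \|_p < \infty$; hence $a_n(p) \le \beta^n a_0(p)$, so $\sum_n \|Z_{n+1}^\vartheta - Z_n^\vartheta\|$ converges in $L_p$ for every $p \in \N$, and in particular the sequence $(Z_n^\vartheta)_{n\ge 0}$ is Cauchy in $L_p(\Do)$ and converges to some $Z^\vartheta$. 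For the almost sure uniform convergence I would use that $\sum_n \|Z_{n+1}^\vartheta - Z_n^\vartheta\| < \infty$ a.s.\ (by Borel--Cantelli applied to $\Prob{\|Z_{n+1}^\vartheta - Z_n^\vartheta\| > \beta^{n/2}} \le \beta^{n p/2} a_0(p)^p / \beta^{np} \to 0$ summably for $p$ large, or simply by monotone convergence of $\sum_n \E{\|Z_{n+1}^\vartheta - Z_n^\vartheta\|}$), so $(Z_n^\vartheta)$ converges uniformly a.s.; the uniform limit of c\`adl\`ag functions is c\`adl\`ag, giving $Z^\vartheta \in \Do$.

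With convergence in hand, the fixed-point identity \eqref{rn2} follows by passing to the limit in \eqref{recdefproc}: the right-hand side converges a.s.\ and in $L_p$ because $\mathfrak{A}_{1/2}, \mathfrak{B}_{1/2}$ are bounded (indeed norm-one) linear operators and $Z_n^{\vartheta 0} \to Z^{\vartheta 0}$, $Z_n^{\vartheta 1} \to Z^{\vartheta 1}$, while the noise term $N_\vartheta \cdot \sg$ is fixed. Finiteness of all moments of $\|Z^\vartheta\|$ is immediate from $\|Z^\vartheta\|_p \le \sum_{n\ge 0} a_n(p) \le a_0(p)/(1-\beta) < \infty$ for every $p$. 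Finally, for the ``identically distributed'' claim I would argue by induction on $n$ that the $Z_n^\vartheta$ all share a common law: this holds for $n=0$, and the inductive step uses that the $N_\vartheta$ are i.i.d.\ and that for each $\vartheta$ the pair $(Z_n^{\vartheta 0}, Z_n^{\vartheta 1})$ has the same joint law (two independent copies of the common law) and is independent of $N_\vartheta$ — a fact itself proved by induction, since the subtrees rooted at $\vartheta 0$ and $\vartheta 1$ use disjoint sets of driving variables. Passing to the limit preserves equality in distribution, so the $Z^\vartheta$ are identically distributed. The main obstacle is organizing the bookkeeping so that all estimates are uniform in $\vartheta \in \Theta$ despite the tree being infinite; this is handled cleanly by the observation that, because $\mathfrak{A}_{1/2}$ and $\mathfrak{B}_{1/2}$ have disjoint supports, the recursion never mixes different $\vartheta$'s within a single level and the $L_p$ bounds propagate with the uniform constant $\beta$.
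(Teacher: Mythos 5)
Your argument is essentially the paper's: you telescope consecutive differences $Z_{n+1}^\vartheta-Z_n^\vartheta$, use the disjoint supports of $\mathfrak{A}_{1/2}$ and $\mathfrak{B}_{1/2}$ to get geometric decay of the increments in $L_p$, deduce that $(Z_n^\vartheta)$ is almost surely uniformly Cauchy, invoke completeness of $(\Do,\|\cdot\|)$, pass to the limit in the recursion, and prove equidistribution by induction on $n$. The one step that is wrong as written is the claimed contraction $a_{n+1}(p)\le\beta\,a_n(p)$: from the pathwise bound $\|Z_{n+1}^\vartheta-Z_n^\vartheta\|=\beta\max\{\|Z_n^{\vartheta0}-Z_{n-1}^{\vartheta0}\|,\|Z_n^{\vartheta1}-Z_{n-1}^{\vartheta1}\|\}$ you can only conclude, via $\E{\max(X,Y)^p}\le\E{X^p}+\E{Y^p}$, that $a_{n+1}(p)\le 2^{1/p}\beta\,a_n(p)$; the $L_p$-norm of a maximum of two independent variables is not the maximum of their $L_p$-norms. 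The extra factor matters: $2^{1/p}\beta=2^{1/p-1+\alpha/2}$ is $<1$ only for $p>p_\alpha=2/(2-\alpha)$, so your recursion contracts for every integer $p\ge2$ but \emph{not} for $p=1$. This is harmless — the paper works with $p=2$ and obtains exactly the rate $(2^{1/2}\beta)^{2n}=2^{-(1-\alpha)n}$, and $L_1$-convergence then follows from $L_2$-convergence — but you should state the corrected constant and route the $p=1$ case through a higher moment. (Your Borel--Cantelli display also has the exponent upside down, $a_n(p)^p/\beta^{np/2}$ should come out as $\beta^{np/2}a_0(p)^p$ after inserting the decay of $a_n(p)$; the monotone-convergence alternative you mention is cleaner and is in effect what the paper does via Markov's inequality.) The remaining steps — continuity of the operators to pass to the limit in \eqref{recdefproc}, the independence of the two subtrees for the equidistribution induction, and summing $a_n(p)$ for the moment bounds — are all correct and match the paper.
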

 \begin{proof}
We first show by induction that for all $\vartheta\in\Theta$ and all $n\in\N_0$ we have
\begin{align}\label{rn1}
\E{\|Z_{n+1}^\vartheta - Z_n^\vartheta\|^2} \leq 2^{-(1-\alpha)n}.
\end{align}
For $n=0$ and $\vartheta\in\Theta$ we have
$\E{\|Z_{1}^\vartheta - Z_0^\vartheta\|^2}=\E{|N_\vartheta|^2}=1$, so (\ref{rn1}) is satisfied for $n=0$. Now as induction hypothesis assume, that (\ref{rn1}) is true for all $\vartheta\in\Theta$ with  $n$ replaced by $n-1$.
Note that for a random variable $X$ in $\Do$ we have $\E{\|X\|^2}=\E{\|X^2\|}$ and that for all $f,g\in \Do$ we have $\mathfrak{A}_u(f)\mathfrak{B}_u(g)=0$ and $\|\mathfrak{A}_u(f)\|=\|\mathfrak{B}_u(f)\|=\|f\|$. With these properties, (\ref{recdefproc}) and the induction hypothesis we obtain
\begin{align}\label{rn3}
\E{\|Z_{n+1}^\vartheta - Z_n^\vartheta\|^2}
& \leq\E{\left\| \left(\frac 1 2 \right)^{1-\alpha/2}
\mathfrak{A}_\frac{1}{2}( Z^{\vartheta 0}_n - Z^{\vartheta 0}_{n-1})\right\|^2 +  \left\| \left(\frac 1 2 \right)^{1-\alpha/2}
\mathfrak{B}_\frac{1}{2}( Z^{\vartheta 1}_n
- Z^{\vartheta 1}_{n-1})\right\|^2} \nonumber\\
&=\left(\frac 1 2 \right)^{2-\alpha}\left\{\E{\|  Z^{\vartheta 0}_n - Z^{\vartheta 0}_{n-1}\|^2} + \E{\|  Z^{\vartheta 1}_n - Z^{\vartheta 1}_{n-1}\|^2}  \right\}\nonumber\\
&\leq \left(\frac 1 2 \right)^{2-\alpha} 2 \cdot 2^{-(1-\alpha)(n-1)}=2^{-(1-\alpha)n}.
\end{align}
From \eqref{rn1}, using Markov's inequality, we infer that $\sup_{m \geq n} \| Z^\vartheta_m - Z^\vartheta_n\| \to 0$ as $n \to \infty$ in probability and hence $\sup_{m,p \geq n} \| Z^\vartheta_m - Z^\vartheta_p\| \to 0$ as $n \to \infty$ in probability by a simple application of the triangle inequality. By monotonicity, the latter convergence is almost sure. In other words, for each $\vartheta\in\Theta$, the sequence $(Z^\vartheta_n)_{n\ge 0}$ is almost surely a Cauchy sequence with
respect to the $\|\,\cdot\,\|$-norm.
Since $(\Do, \| \cdot \|)$ is complete, there is a
limit random process $Z^\vartheta$ such that we have
convergence almost surely uniformly.

Since the
operators $\mathfrak{A}_\frac{1}{2}$ and $\mathfrak{B}_\frac{1}{2}$ are
continuous with respect to the $\|\,\cdot\,\|$-norm we
obtain (\ref{rn2}) from (\ref{recdefproc}) by letting
$n\to\infty$. By construction,
$\{Z_n^\vartheta\,|\, \vartheta \in \Theta\}$ is a family of identically distributed random variables for
each $n\in\N_0$. Hence we obtain that the
$Z^\vartheta$ are identically distributed.
  Finally, note that $Z_n^\vartheta = Z_0^\vartheta + \sum_{k=1}^n Z_k^\vartheta - Z_{k-1}^\vartheta$. Using \eqref{rn1} and the triangle inequality for the $\|\, \cdot \, \|_2$-norm implies that
$\E{\|Z_n^\vartheta\|^2}$ is bounded. The same arguments applied to the decomposition $Z^\vartheta  = Z_0^\vartheta + \sum_{k=1}^\infty Z_k^\vartheta - Z_{k-1}^\vartheta$ show that
$\E{\|Z_n^\vartheta - Z^\vartheta\|^2} \to 0$. 
Similar arguments apply for higher  moments. 
 \end{proof}
\begin{Def}\label{rn_nota}
We write $Z:=Z^\epsilon$, hence $Z$ is a random process identically distributed as the $Z^\vartheta$ in Lemma \ref{rnlem} and call it the limit process and its distribution the limit distribution. Analogously we define $Z_n:=Z^\epsilon_n$.
\end{Def}
Let $\mathcal{M}$ denote the set of probability measures on $(\Do,d_{sk})$. We define the map $T : \mathcal{M} \to \mathcal{M}$ by, for $\mu \in \mathcal{M}$,
\begin{align}\label{rn_mapT}
 T(\mu) := {\cal L} \left( \left( \frac{1}{2} \right)^{1-\alpha/2} \mathfrak{A}_{\frac{1}{2}}(X_0) +  \left( \frac{1}{2} \right)^{1-\alpha/2} \mathfrak{B}_{\frac{1}{2}}(X_1) + N\cdot \sg \right),
\end{align}
where $\Law(X_0) = \Law(X_1) = \mu$, $N$ has the standard normal distribution and  $X_0,X_1, N$ are independent.  For $1\le p <\infty $,
 we further denote
\begin{align*}
\mathcal{M}_p(\Do):=\Big\{\mu\in\mathcal M(\Do) \,\Big|\, \int \|x \|^p d\mu(x) <  \infty\Big\}.
\end{align*}
Let
\begin{align} \label{const_p} p_\alpha = \frac{2}{2-\alpha}.
\end{align}
We have the following characterization of the limit distribution $\Law(Z)$ of $Z$:
\begin{lem}
Let $p > p_\alpha$. The limit distribution $\Law(Z)$ from Definition \ref{rn_nota} is the unique fixed-point of the restriction
of $T$ to $\mathcal{M}_p(\Do)$.
\end{lem}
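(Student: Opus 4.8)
The plan is to establish two things: that $\Law(Z)$ lies in $\mathcal{M}_p(\Do)$ and is a fixed point of $T$, and that $T$ has at most one fixed point in $\mathcal{M}_p(\Do)$ for $p > p_\alpha$. The first part is essentially already in hand: Lemma \ref{rnlem} shows all moments of $\|Z\|$ are finite, so $\Law(Z) \in \mathcal{M}_p(\Do)$ for every $p$, and the almost sure identity \eqref{rn2} at $\vartheta = \epsilon$, together with the independence of $Z^0, Z^1, N_\epsilon$ (which follows from the independence of the $N_\vartheta$ and the fact that $Z^0$ and $Z^1$ are built from disjoint sub-families of the $N_\vartheta$), shows directly that $\Law(Z)$ satisfies $T(\Law(Z)) = \Law(Z)$. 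So the content of the lemma is the uniqueness assertion.

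For uniqueness I would run the standard contraction argument on the space of probability measures in the minimal (Wasserstein-type) $\ell_p$ metric built from the supremum norm on $\Do$. Define, for $\mu, \nu \in \mathcal{M}_p(\Do)$,
\begin{align*}
d_p(\mu,\nu) := \inf \big\{ \|X - Y\|_p : \Law(X) = \mu,\ \Law(Y) = \nu \big\},
\end{align*}
where the infimum is over couplings $(X,Y)$ on a common probability space; this is a complete metric on $\mathcal{M}_p(\Do)$ by the usual argument (the relevant completeness and the existence of optimal or near-optimal couplings go through on the Polish space $(\Do, d_{sk})$, and one notes $\|\cdot\|$-convergence implies $d_{sk}$-convergence). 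The key computation is the contraction estimate: given $\mu, \nu \in \mathcal{M}_p(\Do)$, pick a near-optimal coupling $(X_0, Y_0)$ of $(\mu,\nu)$ and an independent copy $(X_1, Y_1)$, together with a single $N$ independent of both, and feed these through the map in \eqref{rn_mapT}. Because $\mathfrak{A}_{1/2}(f)$ and $\mathfrak{B}_{1/2}(g)$ have disjoint supports and $\|\mathfrak{A}_{1/2}(f)\| = \|\mathfrak{B}_{1/2}(f)\| = \|f\|$, the $N\cdot\sg$ term cancels in the difference and
\begin{align*}
\big\| (T\text{-image of }X) - (T\text{-image of }Y) \big\|^p = \left( \tfrac12 \right)^{(1-\alpha/2)p} \max\big\{ \|X_0 - Y_0\|^p,\ \|X_1 - Y_1\|^p \big\} \le \left( \tfrac12 \right)^{(1-\alpha/2)p} \big( \|X_0 - Y_0\|^p + \|X_1 - Y_1\|^p \big),
\end{align*}
so taking expectations and then the infimum over couplings gives $d_p(T\mu, T\nu)^p \le 2 \cdot 2^{-(1-\alpha/2)p} d_p(\mu,\nu)^p$, i.e.\ $d_p(T\mu,T\nu) \le 2^{1/p - (1-\alpha/2)} d_p(\mu,\nu)$. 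The contraction factor is strictly less than $1$ precisely when $1/p < 1 - \alpha/2$, that is $p > \frac{2}{2-\alpha} = p_\alpha$, which is exactly the hypothesis. Banach's fixed point theorem then gives at most one fixed point in $\mathcal{M}_p(\Do)$, and combined with the first part, $\Law(Z)$ is it.

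The main obstacle — and the step that needs genuine care rather than bookkeeping — is the verification that $d_p$ is a well-defined complete metric on $\mathcal{M}_p(\Do)$ and that the coupling-and-feeding construction is legitimate: one must check that near-optimal couplings of $(\mu,\nu)$ exist (or work with $\varepsilon$-optimal couplings and let $\varepsilon \to 0$), that an independent second copy and an independent $N$ can be realized on a common space, and that $\mathfrak{A}_{1/2}, \mathfrak{B}_{1/2}$ map into $\Do$ measurably so that the image law is well-defined. These are routine but must be stated; the key algebraic inequality above is immediate from the support-disjointness and norm-preservation properties of $\mathfrak{A}_u, \mathfrak{B}_u$ already recorded in the proof of Lemma \ref{rnlem}. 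One should also remark that the restriction to $\mathcal{M}_p(\Do)$ with $p > p_\alpha$ is necessary, not merely convenient: for $p \le p_\alpha$ the map is non-expansive at best and uniqueness can genuinely fail, which is why the statement is phrased with a strict inequality.
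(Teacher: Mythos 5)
Your proof is correct and follows essentially the same route as the paper: existence of the fixed point from Lemma \ref{rnlem} and \eqref{rn2}, and uniqueness via the strict contraction estimate $d_p(T\mu,T\nu)\le 2^{1/p-(1-\alpha/2)}d_p(\mu,\nu)$ in the minimal $L_p$ metric built from the supremum norm, using $\varepsilon$-optimal couplings and the disjoint-support/norm-preservation properties of $\mathfrak{A}_{1/2},\mathfrak{B}_{1/2}$. The only cautionary note is your closing remark that uniqueness ``can genuinely fail'' for $p\le p_\alpha$: the loss of strict contractivity does not by itself establish non-uniqueness, so that claim would need separate justification (it is, however, not needed for the lemma).
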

\begin{proof}
It is clear that $T(\mathcal{M}_p(\Do)) \subseteq \mathcal{M}_p(\Do)$.
We endow $\mathcal{M}_p(\Do)$ with the following metric $d$: For $\mu, \nu \in \mathcal{M}_p(\Do)$ let
\begin{align*}
 d(\mu, \nu) := \inf\left\{ \left( \E { \|X-Y\|^p}\right)^{1/p} : \Law(X)= \mu, \Law(Y) =\nu\right\}.
\end{align*}
To see that the restriction of $T$ to $\mathcal{M}_p(\Do)$ is a strict contraction with respect to $d$ let $\mu, \nu \in \mathcal{M}_p(\Do)$ be arbitrary,
fix $\varepsilon > 0$ and choose random processes $X$ and $Y$ with $\Law(X) = \mu$, $\Law(Y) = \nu$ and
$\left(\E{ \|X-Y\|^p}\right)^{1/p} \leq d(\mu, \nu) + \varepsilon$. Let $(X',Y')$ be a copy of $(X, Y)$ such that $N, (X, Y), (X',Y')$ are independent
and $N$ has the standard normal distribution. Then a calculation similar to (\ref{rn3}) implies
\begin{align*}
 d^p(T(\mu), T(\nu)) & \leq  \left( \frac{1}{2} \right)^{p(1-\alpha/2)}  \left(  \E{\|X - Y\|^p} + \E {\|X' - Y'\|^p}\right) \\
 &\leq 2^{1-p(1-\alpha/2)} (d(\mu, \nu) + \varepsilon)^p.
\end{align*}
With $\varepsilon \downarrow 0$ we obtain $d(T(\mu), T(\nu)) \leq 2^{1/p-(1-\alpha/2)} \: d(\mu, \nu)$. Hence, the restriction of $T$ to
$\mathcal{M}_p(\Do)$ is a strict contraction and has at most one fixed point. This implies the assertion.
\end{proof}

\subsection{Characterization of the limit process}\label{rn_sec22}
For $\vartheta \in \Theta$ let $B_\vartheta$ be the set of real numbers in $[0,1]$ whose binary representation has prefix $\vartheta$. Here, the binary expansion of $t= t_1 t_2 \ldots  \in [0,1)$ is uniquely determined by the convention that we always use expansions such that for all $k \in \N$ there exists $\ell > k$ with $t_\ell= 0$. Note that we have the decomposition $B_\vartheta =B_{\vartheta 0} \cup B_{\vartheta 1}$.
The construction in (\ref{recdefproc}) with the $N_\vartheta$ there implies  representations for $Z$ and $Z_n$ from Definition \ref{rn_nota}, for all $t\in[0,1]$ and $n\ge 0$:
\begin{align}
& Z_n(t) = \sum_{\vartheta \in \Theta:\; |\vartheta| < n} \left( \frac 1 2 \right)^{(1-\alpha/2) \cdot |\vartheta|} \left( \I{t \in B_{\vartheta 0}} - \I{t \in B_{\vartheta 1}}\right) N_\vartheta \nonumber\\
& Z(t) = \sum_{\vartheta \in \Theta} \left( \frac 1 2 \right)^{(1-\alpha/2) \cdot |\vartheta|} \left( \I{t \in B_{\vartheta 0}} - \I{t \in B_{\vartheta 1}}\right) N_\vartheta.\label{rn_z_rep}
\end{align}
Thus, $Z_n$ is constant on  the intervals $[i2^{-n}, (i+1)2^{-n})$ for $i =0, \ldots, 2^n -1$. The $\vartheta \in\Theta$ with $|\vartheta|=n$ we denote in lexicographical order by $w_0, w_1, \ldots, w_{2^n-1}$. Then we have
\begin{align*} Z_{n+1}(t) - Z_n(t) & = \left( \frac 1 2 \right)^{(1-\alpha/2) \cdot n}\sum_{i = 0}^{2^n-1} \left( \I{t \in B_{w_i 0}} - \I{t \in B_{w_i 1}}\right) N_{w_i} \nonumber\\
& = \left( \frac 1 2 \right)^{(1-\alpha/2) \cdot n}\sum_{j = 0}^{2^{n+1}-1} \I{j2^{-(n+1)} \leq t < (j+1) 2^{-(n+1)}} (-1)^j N_{w_{\lfloor j/2 \rfloor}}.
 \end{align*}
For $u,v \in [0,1]$  we denote their binary expansions by
\begin{align*}
 u = \sum_{i=1}^{\infty} u_i2^{-i}, \quad v = \sum_{i=1}^{\infty} v_i2^{-i},
\end{align*}
with $u_i, v_i \in \{0,1\}$, again with the convention  introduced above. Then we denote the length of the longest joint prefix of $u$ and $v$ in their binary expansions  by
\begin{align*}
 j(u,v) = \max\{j \geq 1: (u_1, \ldots, u_j) = (v_1, \ldots, v_j)\},
\end{align*}
with the conventions $\max \emptyset := 0$ and $\max \N :=\infty$.
\begin{thm}\label{rn_gau_pro}
The limit process $Z$ from Definition \ref{rn_nota} is  a centered Gaussian process with c{\`a}dl{\`a}g paths. For its covariance function $\sigma(s,t):=\E {Z(t) Z(s)}$ we have
\begin{align}\label{rn5}
 \sigma(s,t) = \frac{\kappa^{j(s,t)+1}-2 \kappa^{j(s,t)} +1}{1-\kappa}, \quad \kappa := \left( \frac 1 2 \right)^{2-\alpha}
\end{align}
with the convention $\kappa^{\infty} := 0$. Equivalently,
\begin{align}\label{rn4}
 \E {(Z(t)- Z(s))^2} = \gamma  \kappa^{j(s,t)}, \quad \gamma = \frac{4 - 2\kappa }{1-\kappa}.
\end{align}

\end{thm}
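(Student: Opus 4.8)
The plan is to deduce everything from the explicit series representation \eqref{rn_z_rep} together with the convergence already obtained in Lemma \ref{rnlem}. First I would establish that $Z$ is a centered Gaussian process. For any fixed $t$ the partial sum $Z_n(t)$ is a finite linear combination of the independent standard normals $N_\vartheta$, so for $t_1,\dots,t_m\in[0,1]$ the vector $(Z_n(t_1),\dots,Z_n(t_m))$ is centered Gaussian. By Lemma \ref{rnlem} we have $\E{\|Z_n-Z\|^2}\to 0$, hence $Z_n(t_i)\to Z(t_i)$ in $L_2$, and an $L_2$-limit of centered Gaussian vectors is again centered Gaussian. Thus $Z$ is a centered Gaussian process, and its paths are c\`adl\`ag by Lemma \ref{rnlem}. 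This reduces the theorem to computing $\sigma(s,t)$ and then deriving \eqref{rn4} from \eqref{rn5}.

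For the covariance, set $c_\vartheta(t):=(1/2)^{(1-\alpha/2)|\vartheta|}\big(\I{t\in B_{\vartheta 0}}-\I{t\in B_{\vartheta 1}}\big)$, so that $Z(t)=\sum_\vartheta c_\vartheta(t)N_\vartheta$ with convergence in $L_2$. Since the summands are independent and centered, the series converges orthogonally in $L_2$ and $\sigma(s,t)=\sum_{\vartheta\in\Theta}c_\vartheta(s)c_\vartheta(t)$ (equivalently, one computes $\E{Z_n(s)Z_n(t)}$ term by term and passes to the limit using $Z_n(\cdot)\to Z(\cdot)$ in $L_2$). The key observation is that $B_\vartheta=B_{\vartheta 0}\cup B_{\vartheta 1}$ is a disjoint decomposition, so $\I{t\in B_{\vartheta 0}}-\I{t\in B_{\vartheta 1}}=\I{t\in B_\vartheta}\,(-1)^{t_{|\vartheta|+1}}$, where $t=0.t_1t_2\ldots$ is the binary expansion of $t$ under the paper's convention. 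Hence $c_\vartheta(s)c_\vartheta(t)=\kappa^{|\vartheta|}\,\I{s\in B_\vartheta}\I{t\in B_\vartheta}\,(-1)^{s_{|\vartheta|+1}+t_{|\vartheta|+1}}$ with $\kappa=(1/2)^{2-\alpha}$, and $\I{s\in B_\vartheta}\I{t\in B_\vartheta}=1$ exactly when $\vartheta$ is a common prefix of $s$ and $t$, i.e.\ when $|\vartheta|\le j(s,t)$.

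Therefore only the $\vartheta$ with $|\vartheta|=k\le j(s,t)$ contribute, exactly one for each such $k$. For $k<j(s,t)$ one has $s_{k+1}=t_{k+1}$, so the sign is $+1$; for $k=j(s,t)$ (when $j(s,t)<\infty$) one has $s_{k+1}\ne t_{k+1}$ by maximality of $j(s,t)$, so the sign is $-1$. This gives $\sigma(s,t)=\sum_{k=0}^{j(s,t)-1}\kappa^k-\kappa^{j(s,t)}$, and summing the geometric series yields $\sigma(s,t)=\frac{1-\kappa^{j(s,t)}}{1-\kappa}-\kappa^{j(s,t)}=\frac{\kappa^{j(s,t)+1}-2\kappa^{j(s,t)}+1}{1-\kappa}$, which is \eqref{rn5}; the case $j(s,t)=\infty$, which under the expansion convention means $s=t$, gives $\sigma(s,s)=\sum_{k\ge 0}\kappa^k=\frac{1}{1-\kappa}$, consistent with the convention $\kappa^\infty:=0$. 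Finally \eqref{rn4} follows from $\E{(Z(t)-Z(s))^2}=\sigma(t,t)-2\sigma(s,t)+\sigma(s,s)$ by elementary algebra, which collapses to $\gamma\kappa^{j(s,t)}$ with $\gamma=\frac{4-2\kappa}{1-\kappa}$.

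The Gaussianity step and the geometric summation are routine; I expect the only delicate point to be the bookkeeping in the middle step, namely correctly matching the common prefixes of $s$ and $t$ with the indices $k\le j(s,t)$, tracking the sign $(-1)^{s_{k+1}+t_{k+1}}$ across the threshold $k=j(s,t)$, and handling the degenerate/boundary cases $j(s,t)=0$, $j(s,t)=\infty$, and dyadic rationals where the uniqueness convention for binary expansions matters.
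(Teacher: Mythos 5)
Your proof is correct, but it takes a different route from the paper's. The paper establishes Gaussianity the same way you do (each $Z_n$ has Gaussian finite-dimensional marginals, and $L_2$-limits of Gaussian vectors are Gaussian), but for the covariance it does not use the series representation \eqref{rn_z_rep} directly: instead it compares the two sides of the almost-sure fixed-point identity \eqref{rn2} and, using the independence of $N_\vartheta$, $Z^{\vartheta 0}$, $Z^{\vartheta 1}$, derives the self-similar functional equation $\sigma(s,t)=\kappa\,\sigma(2s,2t)+1$ for $s,t<1/2$, $\sigma(s,t)=\kappa\,\sigma(2s-1,2t-1)+1$ for $s,t\ge 1/2$, and $\sigma(s,t)=-1$ when $s,t$ lie on opposite sides of $1/2$; iterating this $j(s,t)$ times yields $\sigma(s,t)=-\kappa^{j(s,t)}+\sum_{i<j(s,t)}\kappa^i$ for $s\neq t$, and the diagonal case $j=\infty$ is then recovered via right-continuity of $t\mapsto\sigma(s,t)$ (dominated convergence plus $\E{\|Z\|^2}<\infty$). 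You instead exploit the orthogonality of the expansion $Z(t)=\sum_\vartheta c_\vartheta(t)N_\vartheta$ and count the contributing common prefixes with their signs, arriving at exactly the same intermediate formula. Your route handles the diagonal $s=t$ for free (no continuity argument needed) and sidesteps the mild bookkeeping around the time reversal hidden in $\mathfrak{B}_{1/2}$ in the second branch of the paper's recursion; the paper's route makes the self-similarity of the covariance explicit, which is thematically consistent with the contraction-method viewpoint used throughout. Both are complete; the only shared point of care is the convention for binary expansions at dyadic points (so that $j(s,t)=\infty$ iff $s=t$), which you correctly flag.
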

\begin{proof}
By induction we find that  $(Z_n)_{n\ge 0}$ is a sequence of centered Gaussian processes. Hence, Lemma \ref{rnlem} implies that $Z$ is a centered Gaussian process.  It remains to compute the covariance function of $Z$. Comparing left and right hand side of equation (\ref{rn2}) and using that, by construction, $N_\vartheta$, $Z^{\vartheta 0}$ and $Z^{\vartheta 1}$ are independent, we find
\begin{align*}
\sigma(s,t) = \left\{ \begin{array}{cl} \kappa \, \sigma(2s,2t) +1, &\mbox{if } 0 \leq s,t < 1/2,\\
\kappa \,\sigma(2s-1,2t-1) +1, &\mbox{if }  1/2 \leq s,t \leq 1, \\ -1, &\mbox{if }  0 \leq s < 1/2 \leq t \leq 1. \end{array} \right.
\end{align*}
From this it follows, for $s\neq t$ that
\begin{align*}
 \sigma(s,t) = -\kappa^{j(s,t)} + \sum_{i=0}^{j(s,t) - 1} \kappa^i.
\end{align*}
By the theorem of dominated convergence, right-continuity of $Z$ and the fact that $\E{\|Z\|^2} < \infty$ it follows, that for any $s \in [0,1]$, $t \to \sigma(s,t)$ is right-continuous.
This finishes the proof of (\ref{rn5}). The equivalence with (\ref{rn4}) is obvious.
\end{proof}
For $k \in \N$ let $\Dy_k = \{i 2^{-k}: i = 1, \ldots, 2^k-1\}$ and $\Dy = \bigcup_{k \geq 1} \Dy_k$ be the set of dyadic numbers on $(0,1)$. For $t \in [0,1)$ and a c{\`a}dl{\`a}g  function $f$, we define $f(t-) = \lim_{s \uparrow t} f(s)$ and
$\Delta f(t) = f(t) - f(t-)$. Then, as $Z$ is almost surely c{\`a}dl{\`a}g, the previous theorem also implies
\begin{align} \label{dist:delta}
\Law( \Delta Z(t)) = \mathcal{N}\left(0,   \gamma \kappa^{i} \right)\end{align}
for any $t \in \Dy$ where $i$ is minimal with $t \in \Dy_i$. Here and subsequently, $\mathcal{N}(\mu, \sigma^2)$ denotes the normal distribution with mean $\mu$ and variance $\sigma^2$.
\begin{cor}
 Almost surely, $Z$ is continuous at $t$ for all $t \notin \Dy$. On the contrary, for any $t \in \Dy$, almost surely, $Z$ is not continuous at $t$.
\end{cor}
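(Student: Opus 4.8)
The plan is to prove the two assertions separately, using the series representation \eqref{rn_z_rep} of $Z$ together with the distributional information on the jumps from \eqref{dist:delta}.

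For the first assertion, fix $t \notin \Dy$, so that the binary expansion of $t$ is not eventually constant and all proper prefixes $\vartheta$ of $t$ satisfy that $t$ lies in the \emph{interior} of $B_\vartheta$ with respect to the dyadic filtration; more precisely, for each $n$ there is a dyadic interval $[i2^{-n},(i+1)2^{-n})$ with $t$ in its interior. The increments $Z_{n+1}-Z_n$ are, on each such interval, constant, so $Z_m - Z_n$ is constant on a neighbourhood of $t$ for all $m\ge n$; letting $m\to\infty$ and using the uniform convergence $Z_m \to Z$ from Lemma~\ref{rnlem}, we get that $Z - Z_n$ is constant on a neighbourhood of $t$. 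Since $Z_n$ is a step function continuous at $t$ (because $t$ is not a dyadic breakpoint at level $n$), $Z$ is continuous at $t$. This holds simultaneously for all $t\notin\Dy$ on the almost sure event that $Z_n\to Z$ uniformly, which is the content of Lemma~\ref{rnlem}; hence almost surely $Z$ is continuous at every $t\notin\Dy$.

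For the second assertion, fix $t\in\Dy$ and let $i$ be minimal with $t\in\Dy_i$. By \eqref{dist:delta}, $\Delta Z(t)$ has the $\mathcal{N}(0,\gamma\kappa^i)$ distribution, which is a non-degenerate centered Gaussian since $\gamma\kappa^i>0$. Therefore $\Prob{\Delta Z(t)=0}=0$, i.e.\ almost surely $Z(t)\neq Z(t-)$, so $Z$ is not continuous at $t$. (Note that here the exceptional null set depends on $t$, but since $\Dy$ is countable one may even conclude that almost surely $Z$ is discontinuous at \emph{every} $t\in\Dy$, by a union bound; the statement as written only requires the pointwise version.)

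The only mild subtlety --- and the step I would write most carefully --- is the first assertion, where one must argue that a single almost sure event (uniform convergence $Z_n\to Z$) suffices to handle the uncountable set $\{t\notin\Dy\}$ at once. The key observation making this work is that continuity of $Z$ at $t\notin\Dy$ is a \emph{local} consequence of $Z$ agreeing with the step function $Z_n$ on a neighbourhood of $t$ up to an error that vanishes uniformly in $n$; no further randomness beyond the uniform-convergence event is needed. The second assertion is essentially immediate from \eqref{dist:delta} and requires no real obstacle.
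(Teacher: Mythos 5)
Your proof takes the same route as the paper's: the first assertion via the almost sure uniform convergence $Z_n\to Z$ from Lemma~\ref{rnlem} combined with continuity of each step function $Z_n$ at every $t\notin\Dy$, handled simultaneously on the single uniform-convergence event; the second assertion via the non-degenerate Gaussian law \eqref{dist:delta} of $\Delta Z(t)$. The second part and the overall strategy are fine. However, one intermediate claim in your first part is false: you assert that, letting $m\to\infty$, ``$Z-Z_n$ is constant on a neighbourhood of $t$''. This cannot hold, since $Z-Z_n$ has a jump at every dyadic point of level greater than $n$ and $\Dy$ is dense in $[0,1]$, so $Z-Z_n$ is not constant on any nondegenerate interval. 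The source of the error is that the interval on which $Z_m-Z_n$ is constant is the \emph{level-$m$} dyadic interval containing $t$, which shrinks to $\{t\}$ as $m\to\infty$; constancy therefore does not pass to the limit. The correct replacement is the standard three-term estimate (this is what the paper's one-line proof implicitly uses: a uniform limit of functions continuous at $t$ is continuous at $t$): on the uniform-convergence event, for $t\notin\Dy$ and $s$ in the level-$n$ dyadic interval containing $t$ (a neighbourhood of $t$, since $t$ is not an endpoint of it),
\[
|Z(s)-Z(t)| \;\le\; \|Z-Z_n\| + |Z_n(s)-Z_n(t)| + \|Z_n-Z\| \;=\; 2\,\|Z-Z_n\| \;\longrightarrow\; 0 \quad (n\to\infty).
\]
With this repair your argument is complete and coincides with the paper's; your closing remark that a union bound over the countable set $\Dy$ even gives almost sure discontinuity at \emph{every} dyadic point is correct and slightly stronger than what is stated.
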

\begin{proof} Let $A$ be a set of measure one such that $Z_n \to Z$ uniformly on $A$. As $Z_n$ is continuous at $t$ for all $n$ if $t \notin \Dy$ it follows that $Z$ is continuous at $t$ for all $t \notin \Dy$ on $A$, thus almost surely. For $t \in \Dy$, discontinuity follows immediately from \eqref{dist:delta}.
\end{proof}


More refined path properties are discussed in Sections \ref{rn_sec41} and \ref{rn_sec42}.   Simulations of realizations of $Z_{10}$  for $\alpha = 1/2$ are presented below in Figure \ref{fig_simu} to indicate the
structure  of the paths of the limit process $Z$.

\begin{figure}[h]
\includegraphics[scale=0.6]{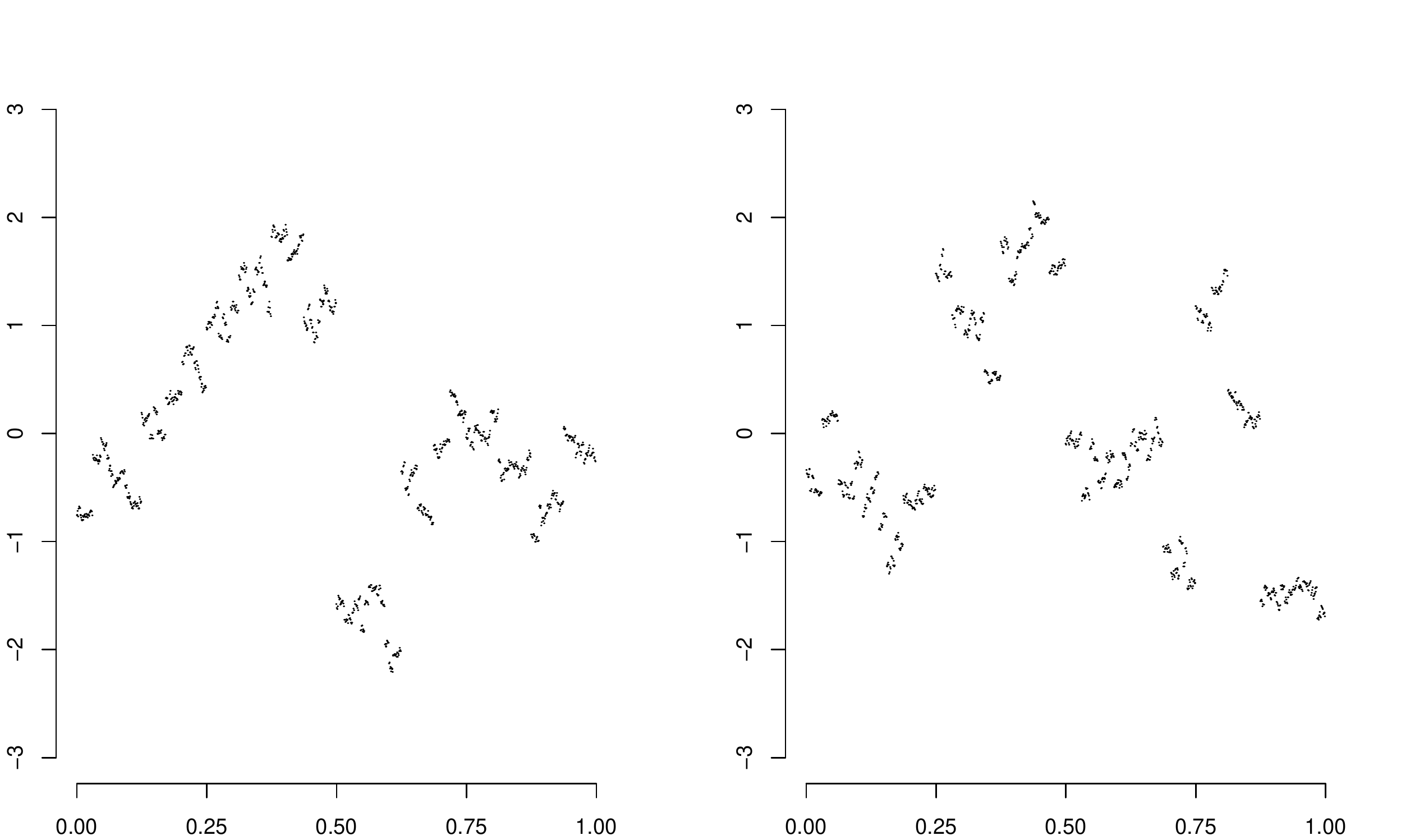}
\caption{Simulations of two independent realizations of $Z_{10}$ for $\alpha = 1/2$.}
\label{fig_simu}
\end{figure}

\section{Analysis of the  Quickselect process}\label{rn_sec3}
Our asymptotic analysis to prove the functional limit laws for  the processes in Theorems \ref{rn_thm_in1} and \ref{rn_thm_in2} is organized as follows. In Section \ref{secc_31} we state a recurrence relation on which the whole analysis is based. To apply ideas from the contraction method we need to derive a distributional fixed point equation for a potential limit of the normalized processes as captured by the map $T$ in (\ref{rn_mapT}). For this, in Section \ref{secc_32} first the asymptotic behavior of the size $I_n$ of $S_\le$ is identified. Then in Section \ref{secc_33} a recurrence for the normalized processes appearing in Theorem \ref{rn_thm_in1} is derived. The random quantities are all embedded on one probability space and coupled in such a way that distances can be
bounded pointwise (with respect to randomness $\omega$) in the supremum norm on $\Do$. We keep the jumps of a couple of auxiliary processes exactly aligned to those of $Y_n$ in order to be able to bound distances by contraction arguments. The necessary  deformations  in time to align with the jumps of the limit process $Z$ are afterwards done  in Proposition \ref{prop3}.

\subsection{Preliminaries}\label{secc_31}
Our analysis is based on a recurrence for the distributions of the processes $X_n^{(2)}= (X_n^{(2)}(\ell))_{1\le \ell\le n}$ and $X_n^{(3)}= (X_n^{(3)}(\ell))_{1\le \ell\le n}$.
 Note that after the selection of the median from the subset the $k$ elements of the subset can already be assigned to the sets $S_<$, $S_>$ and $S_\le$ respectively so that only $n-k$ remaining elements need to be compared with the pivot element. We denote  the rank of the pivot element chosen in the first step  by $I_n$. We set
 $X_0^{(2)}=X_0^{(3)}:=0$. Then we have $X_1^{(2)}=X_1^{(3)}=0$ and, for all $n\ge 2$,
\begin{align}
X^{(3)}_n \stackrel{d}{=} \left( \I{\ell < I_n} X^{(3)}_{I_n-1}(\ell)  + \I{\ell \geq I_n +1} \widehat X^{(3)}_{n-I_n}(\ell - I_n) + n-k + T_n\right)_{1\le \ell\le n},  \label{rec3ver}
\end{align}
where $T_n$,$I_n$, $X^{(3)}_{0},\ldots, X^{(3)}_{n-1}$, $\widehat X^{(3)}_{0},\ldots, \widehat X^{(3)}_{n-1}$ are independent and $\widehat{X}^{(3)}_{j}$ is distributed as $X^{(3)}_{j}$ for $0\le j \le n-1$. The stated independence is satisfied since in subsequent partitioning steps all choices of subsets are made independently.
For the $2$-version we have the same initial values as for the $3$-version and, for all $n\ge 2$ that
\begin{align} X^{(2)}_n \stackrel{d}{=} \left( \I{\ell < I_n+1} X^{(2)}_{I_n}(\ell)  + \I{\ell \geq I_n +1} \widehat X^{(2)}_{n-I_n}(\ell - I_n) + n-k + T_n\right)_{1\le \ell\le n}, \label{rec2ver}
\end{align}
with conditions on independence and identical distributions analogous to the $3$-version in (\ref{rec3ver}).

Recall that $T_n$ is the number of key comparisons for the identification of the median within the random subset and that we assume condition (\ref{rn_con_med}).

We choose $n_0$ large enough such that $k(n)\ge 3$ for all $n\ge n_0$. This ensures that $I_n < n$ for all $n\ge n_0$. 

\subsection{Asymptotics for the pivot and sublist sizes}
\label{secc_32}
 For simplicity of representation, we assume $c=1$, i.e.\ $k =k(n) \sim n^\alpha$ with $\alpha \in (0,1/2]$ for the remainder of the section. 

Elements in the presample of size $k$ are chosen without replacement, thus the distribution of $I_n$ is given by 
\begin{align} \label{weights} \Prob{I_n= i} = \frac{ {i-1 \choose (k-1)/2} {n-i \choose (k-1)/2}}{{n \choose k}}, \quad \frac{k+1}{2} \leq i \leq n-\frac{k-1}{2}. \end{align}
 Equivalently,
$$\Law(I_n) = \Law \left(\frac{k+1}{2} + \text{Bin} \left(n-k, \text{Beta} \left(\frac{k+1}{2}, \frac{k+1}{2} \right) \right)\right),$$
where, here and subsequently, for $n \in \N, p \in [0,1]$, $\text{Bin}(n,p)$ denotes a random variable with the binomial distribution for $n$ trials with success probability $p$. Moreover, for $\alpha, \beta > 0$,  $\text{Beta}(\alpha, \beta)$ denotes a random variable with the beta distribution with parameters $\alpha, \beta$.

Subsequently, let $(M_n)_{n\ge 1}$ be a sequence of random variables with   the beta distribution with parameters $(k+1)/2,(k+1)/2$.
\begin{lem}\label{rn_lem_mn}
 We have
 \begin{align*}
  \E {M_n} = \frac{1}{2}, \quad \Var(M_n) = \frac{1}{4 (k+2)} \sim \frac{1}{4} n^{-\alpha},
 \end{align*}
and, for $n \to \infty$,
\begin{align*}
n^{\alpha/2}\left(M_n  - \frac{1}{2}\right) \stackrel{d}{ \longrightarrow} \mathcal{N} \left(0,\frac{1}{4}\right).
\end{align*}
\end{lem}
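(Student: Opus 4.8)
The plan is to treat the three claims of Lemma \ref{rn_lem_mn} in order of increasing difficulty, exploiting that $M_n$ has the $\text{Beta}((k+1)/2,(k+1)/2)$ distribution. Write $a := (k+1)/2$. The mean and variance are immediate from the standard formulas $\E{M_n} = a/(a+a) = 1/2$ and $\Var(M_n) = a^2/\big((2a)^2(2a+1)\big) = 1/\big(4(2a+1)\big) = 1/(4(k+2))$; since $k = k(n) \sim n^\alpha$ this is asymptotically $\tfrac14 n^{-\alpha}$. So the only substantive point is the central limit theorem for $n^{\alpha/2}(M_n - 1/2)$.

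For the distributional convergence I would first normalize: set $W_n := (M_n - 1/2)/\sqrt{\Var(M_n)} = 2\sqrt{k+2}\,(M_n - 1/2)$, so that it suffices to show $W_n \convdist \mathcal N(0,1)$, because $n^{\alpha/2}(M_n-1/2) = \tfrac{n^{\alpha/2}}{2\sqrt{k+2}}\, W_n$ and the prefactor tends to $\tfrac12$. There are a few routine ways to get $W_n \convdist \mathcal N(0,1)$. The cleanest is the classical representation $M_n \eqdist G_1/(G_1 + G_2)$ with $G_1, G_2$ independent $\text{Gamma}(a,1)$ variables; then $M_n - 1/2 = \tfrac12 (G_1 - G_2)/(G_1+G_2)$, and writing $G_i = a + \sqrt a\, \xi_i^{(n)}$ where $\xi_i^{(n)} := (G_i - a)/\sqrt a \convdist \mathcal N(0,1)$ by the CLT for Gamma variables (each $G_i$ being a sum of $\lceil a\rceil$-many i.i.d.\ contributions, up to the non-integrality of $a$ which is handled by the infinite-divisibility/triangular-array CLT), one gets
\begin{align*}
2\sqrt{a}\,(M_n - 1/2) = \sqrt a\cdot\frac{G_1 - G_2}{G_1 + G_2} = \frac{\xi_1^{(n)} - \xi_2^{(n)}}{2 + (\xi_1^{(n)} + \xi_2^{(n)})/\sqrt a}.
\end{align*}
The numerator converges in distribution to $\mathcal N(0,2)$, the denominator converges in probability to $2$ (since $\sqrt a \to \infty$), so Slutsky's theorem yields $2\sqrt a\,(M_n - 1/2) \convdist \mathcal N(0,1/2)$, and since $\sqrt{k+2}/\sqrt a \to \sqrt 2$ this gives $W_n \convdist \mathcal N(0,1)$, hence the claim. (Alternatively one can invoke a known local/central limit theorem for $\text{Beta}(a,a)$ as $a\to\infty$, or compute the moment generating function $\E{e^{\lambda W_n}}$ directly via the Beta function and check it converges to $e^{\lambda^2/2}$ — the Gamma-ratio route just organizes this computation.)

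The main obstacle, such as it is, is purely bookkeeping: $a = (k+1)/2$ need not be an integer (indeed $k$ is odd so $a$ is an integer, but $k(n)$ is only asymptotic to $n^\alpha$, so one should not lean on any exact value), so the CLT for $G_i = \text{Gamma}(a,1)$ must be justified via a triangular-array statement rather than a fixed i.i.d.\ sum — this is standard (Gamma is infinitely divisible, or use the Lindeberg/Lyapunov condition on $\lceil a \rceil$ i.i.d.\ $\text{Gamma}(a/\lceil a\rceil,1)$ summands) but deserves a sentence. Everything else — the Slutsky step, tracking the $\sqrt{k+2}$ versus $\sqrt a$ and the $n^{\alpha/2}$ versus $\sqrt{k+2}$ discrepancies, all of which are $1 + o(1)$ because $k \sim n^\alpha$ — is routine. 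No uniform integrability or moment control beyond what is already displayed is needed for the stated weak convergence.
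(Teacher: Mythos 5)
Your proof is correct, and the mean/variance computation matches the paper's (which simply calls it ``straightforward''). For the central limit theorem, however, you take a genuinely different route. The paper exploits the beta--binomial duality
\begin{align*}
\Prob{\text{Beta}(a,b) < x} = \Prob{\text{Bin}(a+b-1, x) \geq a}, \qquad a,b \in \N,\ x \in (0,1),
\end{align*}
(valid here because $k$ is odd, so $a=b=(k+1)/2\in\N$), which converts the statement into a question about binomial tail probabilities that is settled by de Moivre--Laplace; this works directly at the level of distribution functions and needs no representation of $M_n$ as a functional of other random variables. You instead use the Gamma-ratio representation $M_n \eqdist G_1/(G_1+G_2)$ and reduce to the CLT for Gamma variables plus Slutsky. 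Both arguments are sound and of comparable length; the paper's has the minor advantage that the same identity \eqref{verb.binbeta} is reused later (in the proof of Lemma \ref{lem:binbeta}) to get tail bounds on $M_n$ via Bernstein's inequality, so it earns its keep twice. One remark on your flagged ``obstacle'': since $k(n)$ is assumed odd throughout (the paper chooses $n_0$ so that $k(n)\ge 3$ is odd), $a=(k+1)/2$ is always a positive integer, so $G_i$ is an honest sum of $a$ i.i.d.\ $\mathrm{Exp}(1)$ variables and the classical i.i.d.\ CLT along $a=a(n)\to\infty$ suffices --- no triangular-array or infinite-divisibility argument is needed, though your version of course also works.
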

\begin{proof} The  expressions for mean and variance follow by straightforward calculations. For the  limit theorem note that for the beta distribution and the binomial distribution we have the following identity
\begin{align} \label{verb.binbeta}
\Prob{\text{Beta}(a,b) < x} = \Prob{\text{Bin}(a+b-1, x) \geq a} \end{align}
 for all $a,b\in\N$ and $x\in(0,1)$. Applying this to $M_n$ and using the central limit theorem, e.g., in the version of de Moivre-Laplace implies the assertion.
\end{proof}
For the size $I_n$ of the left sublist generated in the first partitioning step we have:
\begin{lem} \label{lem:split}
 We have
 \begin{align*}
 \E{I_n} = \frac{n+1}{2}, \quad \Var(I_n) = \frac{1}{4}\left(k (2n-k) + n\left(\frac{n-1}{k+2}+1\right)\right) \sim \frac{1}{4} n^{2-\alpha}. 
\end{align*}
and
\begin{align*}
 \frac{I_n - n/2}{n^{1-\alpha/2}} \stackrel{d}{\longrightarrow} \mathcal{N} \left(0,\frac{1}{4}\right).
\end{align*}
\end{lem}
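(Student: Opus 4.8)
The plan is to exploit the explicit representation
$\Law(I_n) = \Law\!\left(\frac{k+1}{2} + \mathrm{Bin}(n-k, M_n)\right)$ with $M_n$ Beta$((k+1)/2,(k+1)/2)$, which decomposes the randomness of $I_n$ into the ``outer'' binomial fluctuation and the ``inner'' beta fluctuation. First I would compute $\E{I_n}$ and $\Var(I_n)$: the mean follows immediately from $\E{M_n}=1/2$ and the tower property, giving $\E{I_n} = \frac{k+1}{2} + (n-k)\cdot\frac12 = \frac{n+1}{2}$. For the variance I would use the conditional variance formula
\begin{align*}
\Var(I_n) = \E{\Var(\mathrm{Bin}(n-k,M_n)\mid M_n)} + \Var\!\big(\E{\mathrm{Bin}(n-k,M_n)\mid M_n}\big) = (n-k)\,\E{M_n(1-M_n)} + (n-k)^2\,\Var(M_n),
\end{align*}
then plug in $\E{M_n(1-M_n)} = \frac14 - \Var(M_n) = \frac14\cdot\frac{k+1}{k+2}$ and $\Var(M_n) = \frac{1}{4(k+2)}$ from Lemma \ref{rn_lem_mn}, and simplify; the asymptotics $\Var(I_n)\sim \frac14 n^{2-\alpha}$ will come from the dominant term $(n-k)^2\Var(M_n)\sim \frac{n^2}{4}\cdot\frac{n^{-\alpha}}{1} \cdot \frac14$ — wait, more carefully $(n-k)^2/(4(k+2)) \sim n^2/(4 n^\alpha) = \frac14 n^{2-\alpha}$, which dominates the $O(n\cdot k) = O(n^{1+\alpha})$ term precisely because $\alpha \le 1/2$ forces $2-\alpha \ge 3/2 > 1+\alpha$.

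For the central limit theorem I would normalize by $n^{1-\alpha/2}$ and write
\begin{align*}
\frac{I_n - n/2}{n^{1-\alpha/2}} = \frac{\mathrm{Bin}(n-k,M_n) - (n-k)M_n}{n^{1-\alpha/2}} + \frac{(n-k)(M_n - 1/2)}{n^{1-\alpha/2}} + \frac{(k+1)/2 - k/2}{n^{1-\alpha/2}}.
\end{align*}
The last (deterministic) term is $O(n^{-1+\alpha/2}) \to 0$. For the second term, Lemma \ref{rn_lem_mn} gives $n^{\alpha/2}(M_n - 1/2) \convdist \mathcal{N}(0,1/4)$, and since $(n-k)/n^{1-\alpha/2} = n^{\alpha/2}(1 - k/n) \to n^{\alpha/2}\cdot 1$ in the sense that $(n-k)n^{-1+\alpha/2}\big/n^{\alpha/2} \to 1$, Slutsky's lemma yields that this term converges in distribution to $\mathcal{N}(0,1/4)$. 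The first term is the ``conditionally centered binomial'': I would show it converges to $0$ in probability (or in $L^2$), since its conditional variance given $M_n$ is $(n-k)M_n(1-M_n) = O(n)$, hence after dividing by $n^{1-\alpha/2}$ the variance is $O(n\cdot n^{-(2-\alpha)}) = O(n^{-1+\alpha}) \to 0$ because $\alpha < 1$. Combining via Slutsky gives $\frac{I_n - n/2}{n^{1-\alpha/2}} \convdist \mathcal{N}(0,1/4)$.

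The main obstacle, such as it is, lies in making the Slutsky/conditioning argument for the binomial term fully rigorous: one must be slightly careful that the conditional-variance bound $(n-k)M_n(1-M_n) \le (n-k)/4$ holds surely, so that the unconditional $L^2$ bound is clean and the mixed term between the two sources of randomness vanishes. An alternative, perhaps cleaner route is to invoke identity \eqref{verb.binbeta} once more to realize $I_n - (k+1)/2$ directly as (a shift of) $\mathrm{Bin}$ of a single Beta variable already built into $M_n$, but the two-term decomposition above is the most transparent. Everything else is the routine variance bookkeeping sketched above, and the key structural point — that the beta fluctuation dominates the binomial fluctuation in this regime — is exactly what makes the limit non-degenerate with the stated scaling $n^{1-\alpha/2}$ and variance $1/4$.
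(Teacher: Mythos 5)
Your proof is correct and follows essentially the same route as the paper: both decompose $I_n$ via the Beta--Binomial mixture into a conditionally centered binomial term (negligible at scale $n^{1-\alpha/2}$) plus the Beta fluctuation $(n-k)(M_n-\tfrac12)$, which carries the Gaussian limit; the paper realizes the Beta part almost surely via Skorokhod's representation theorem and a conditional CLT, while you combine the pieces with Slutsky and a direct $L^2$ bound on the binomial part, which is if anything slightly cleaner. One small slip in your variance bookkeeping: the term $(n-k)\,\E{M_n(1-M_n)}$ is $O(n)$ (since $M_n(1-M_n)\le \tfrac14$), not $O(nk)$, and your comparison $2-\alpha\ge 3/2>1+\alpha$ fails at the boundary $\alpha=1/2$, where both sides equal $3/2$; since the term is genuinely $O(n)=o(n^{2-\alpha})$, the stated asymptotics survive, but as written your justification does not cover the case $\alpha=1/2$, which is included in the hypotheses.
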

\begin{proof}
The first two moments follow from Lemma \ref{rn_lem_mn}.
Given $M_n$, let $X_n$ have the binomial distribution with parameters $n-k, M_n$ and set $I_n = \frac{k+1}{2} + X_n$.
By Skorokhod's representation theorem, we may assume  the existence of a sequence $(\FF_n)$, where $\FF_n$ has the distribution  of
$n^{\alpha/2}(M_n  - 1/2)$ such that $\FF_n \to \mathbf{N}$ almost surely where $\mathbf{N}$ has the normal $\mathcal{N}(0,\frac{1}{4})$ distribution.
Let $\MM_n = \FF_n n^{-\alpha/2} + 1/2$ and
construct $\XX_n$ and $\II_n$ such as $X_n$ and $I_n$ but based on the $\MM_n$.
Decomposition yields
\begin{align*}
 \frac{\II_n - n/2}{n^{1-\alpha/2}} = &\frac{\XX_n - \MM_n(n-k)}{\sqrt{(n-k)\MM_n(1-\MM_n)}} \frac{\sqrt{(n-k)\MM_n(1-\MM_n))}}{n^{1-\alpha/2}} \\
&~+  \frac{n \MM_n - n/2}{n^{1-\alpha/2}} - \frac{k \MM_n - k/2 + 1/2}{n^{1-\alpha/2}}.
\end{align*}
By construction, the second summand of the latter display tends to $\mathbf N$ almost surely. Moreover, the third summand tends to zero almost surely.
By conditioning on $(\MM_n)$ and the fact that $\MM_n \to 1/2$ almost surely, the first factor of the first summand converges to a standard normal distribution by the central limit theorem for sums of independent and uniformly bounded random variables. As the second factor of the first summand tends to zero almost surely, the first summand converges to zero in probability. This shows
$$ \frac{\II_n - n/2}{n^{1-\alpha/2}} \to \mathbf{N}, \quad (n \to \infty)$$
in probability.  \end{proof}
 More refined information about the distribution of $M_n$ is given in the Appendix.

\subsection{Proof of Theorems \ref{rn_thm_in1} and \ref{rn_thm_in2}} \label{secc_33}
We first discuss the 2-version of the process and
 recall the normalization from Theorem \ref{rn_thm_in1} which we denote by $Y_0:=0$ and
 \begin{align*}
 Y_n(t) := \frac{X_n^{(2)}(\lfloor tn \rfloor + 1) - 2n}{n^{1-\alpha/2}},\quad t\in [0,1], n\ge 1,
\end{align*}
with the convention $X_n^{(2)}(n+1):=X_n^{(2)}(n)$.
Then, $Y_n:=(Y_n(t))_{t\in[0,1]}$ satisfies, as a random variable in $(\Do,d_{sk})$, for $n\ge n_0$ that
\begin{align*}
Y_n&  \stackrel{d}{=}  \left( \frac{I_n}{n} \right)^{1-\alpha/2} \mathfrak{A}_\frac{I_n}{n}\left(Y_{I_n}\right) +  \left( \frac{n-I_n}{n} \right)^{1-\alpha/2} \mathfrak{B}_\frac{I_n}{n}\left(\widehat Y_{n-I_n}\right)  \\
 &\quad~ + \frac{1}{n^{1-\alpha/2}} \left(T_n -k +  \I{t < I_n /n} (2I_n - n) +  \I{t \geq I_n /n} (n -2I_n) \right)
\end{align*} on $(\Do, d_{sk})$ with conditions on independence and distributional copies as in (\ref{rec3ver}).\\

Now, we embed all the relevant random variables on one probability space such that we have appropriate almost sure
convergences. Throughout we use boldface characters to denote the embedded quantities. To be specific,
by Skorokhod's representation theorem and Lemma \ref{lem:split}, we can construct a set of independent and identically distributed random variates $\{(\pS_n^\vartheta)_{n \geq n_0}, \NN_\vartheta, \vartheta \in \Theta\}$ such that $\NN_\vartheta$ has the standard normal distribution, $\pS_n^\vartheta$ has the distribution of $(2I_n - n)/n^{1-\alpha/2}$ and $ \pS_n^\vartheta \to \NN_\vartheta$ almost surely. Moreover,  by Lemma \ref{lem:split}, we have
$$\E{| \pS_n^\vartheta - \NN_\vartheta|^s} \to 0, \quad n \to \infty $$ for any $1 \leq s \leq 2$. Furthermore, note that $\Law(I_n) = \Law(\JJ_n^\vartheta)$ where
 $\JJ_n^\vartheta := \pS_n^\vartheta \cdot n^{1-\alpha/2}/2 + n/2$. We can further augment this set of random variables by another set $\{ \TT^\vartheta_n, n \geq n_0,  \vartheta \in \Theta\}$ of independent random variables, independent of $(\pS_n^\vartheta)_{n \geq 0}, \NN_\vartheta, \vartheta \in \Theta,$ such that  $\Law(\TT^\vartheta_n) = \Law( T_n)$.
Let $\YY^\vartheta_0 := 0$ and $\{\YY_i^\vartheta\,|\, i < n_0, \vartheta \in \Theta\}$ be a set of independent processes with $\Law(\YY_i^\vartheta) = \Law(Y_i)$, also independent of the family of random variables defined above. For $n \geq n_0$, we define recursively
\begin{align*}
\YY^\vartheta_n & := \left( \frac{\JJ_n^\vartheta}{n} \right)^{1-\alpha/2} \mathfrak{A}_\frac{\JJ_n^\vartheta}{n} \left( \YY^{\vartheta0}_{\JJ_n^\vartheta}\right) +   \left( \frac{n-\JJ_n^\vartheta}{n} \right)^{1-\alpha/2} \mathfrak{B}_\frac{\JJ_n^\vartheta}{n}\left(  \YY^{\vartheta1}_{n-\JJ_n^\vartheta}\right)  \\
 &\quad~+ \frac{1}{n^{1-\alpha/2}} \left(\TT^\vartheta_n -k+ \I{t < \JJ_n^\vartheta /n} (2\JJ_n^\vartheta - n) +   \I{t \geq \JJ_n^\vartheta /n} (n -2\JJ_n^\vartheta) \right).
\end{align*}
 By construction, we have $\Law(\YY^\vartheta_n) = \Law(Y_n)$ for all $n \in \N$, since the sequences $(\YY^\vartheta_n)_{n\ge 0}$ and $(Y_n)_{n\ge 0}$
satisfy the same distributional recurrence and have the same initial distributions for $i = 0, \ldots, n_0-1$.
Subsequently, we use the sets $\{Z_n^\vartheta, n \in \N_0, \vartheta \in \Theta\}$ and  $\{Z^\vartheta, \vartheta \in \Theta \}$ as defined in (\ref{recdefproc}) and Lemma \ref{rnlem} where the construction is executed using the particular set of random variables $\{\NN_\vartheta, \vartheta \in \Theta\}$. We denote the resulting random variables by $\ZZ_n^\vartheta, n \in \N, \vartheta \in \Theta$ and $\ZZ^\vartheta$, $\vartheta \in \Theta$.

To start bounding distances between $\YY_n$ and $\ZZ_n$
  we use two intermediate sequences of stochastic processes $\QQ^\vartheta_n$ and $\RR^\vartheta_n$   in $(\Do,d_{sk})$. First, let $\QQ^\vartheta_i := 0$ for all $\vartheta \in \Theta, i < n_0$ and,  recursively for all $n\ge n_0$,
\begin{align*}
\QQ^\vartheta_n & := \left( \frac{\JJ_n^\vartheta}{n} \right)^{1-\alpha/2} \mathfrak{A}_\frac{\JJ_n^\vartheta}{n} \left(\QQ^{\vartheta0}_{\JJ_n^\vartheta}\right) +   \left( \frac{n-\JJ_n^\vartheta}{n} \right)^{1-\alpha/2} \mathfrak{B}_\frac{\JJ_n^\vartheta}{n}\left( \QQ^{\vartheta1}_{n-\JJ_n^\vartheta}\right)  \\
 &\quad~+ \I{t < \JJ_n^\vartheta /n} \NN_\vartheta -   \I{t \geq \JJ_n^\vartheta /n} \NN_\vartheta .
\end{align*}
Second, $\RR^\vartheta_i := 0$ or all $\vartheta \in \Theta, i < n_0$ and,  recursively for all $n\ge n_0$,
\begin{align}
\RR^\vartheta_n & :=  \left( \frac 1 2\right)^{1-\alpha/2}\mathfrak{A}_\frac{\JJ_n^\vartheta}{n} \left( \RR^{\vartheta0}_{\JJ_n^\vartheta}\right)+ \left( \frac 1 2 \right)^{1-\alpha/2} \mathfrak{B}_\frac{\JJ_n^\vartheta}{n}\left( \RR^{\vartheta1}_{n-\JJ_n^\vartheta}\right) \label{ren_def_rr}\\
 &\quad~ +  \I{t < \JJ_n^\vartheta /n} \NN_\vartheta -   \I{t \geq \JJ_n^\vartheta /n} \NN_\vartheta.\nonumber
\end{align}
The proof of the functional limit law in Theorem \ref{rn_thm_in1} is organized by splitting the difference between $\YY_n^\epsilon$ and $\ZZ^\epsilon$ into several intermediate
differences involving the terms defined above.  As in Definition \ref{rn_nota} we use the abbreviations $\YY_n:=\YY_n^\epsilon$, $\QQ_n:=\QQ_n^\epsilon$, $\RR_n:=\RR_n^\epsilon$ and $\ZZ_n:=\ZZ_n^\epsilon$.
\begin{prop} \label{prop1}
As $n \to \infty$, we have $\E{\| \YY_n - \QQ_n\|^2} \to 0$.
\end{prop}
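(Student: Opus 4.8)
The plan is to unwind both recursions simultaneously and to track the pointwise (in $\omega$) difference $\|\YY_n - \QQ_n\|$ in the supremum norm, exploiting the fact that $\YY_n$ and $\QQ_n$ are built from the \emph{same} random split points $\JJ_n^\vartheta$ and the \emph{same} Gaussians $\NN_\vartheta$, so the operators $\mathfrak{A}_{\JJ_n^\vartheta/n}$ and $\mathfrak{B}_{\JJ_n^\vartheta/n}$ appearing in both recursions agree. Subtracting the defining recursions for $\YY^\vartheta_n$ and $\QQ^\vartheta_n$, the toggle terms $\I{t < \JJ_n^\vartheta/n}\,(\cdot) - \I{t \ge \JJ_n^\vartheta/n}\,(\cdot)$ cancel up to the discrepancy between $(2\JJ_n^\vartheta - n)/n^{1-\alpha/2}$ and $\NN_\vartheta$, i.e.\ up to $\pS_n^\vartheta - \NN_\vartheta$, plus the additive term $(\TT^\vartheta_n - k)/n^{1-\alpha/2}$. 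Using $\mathfrak{A}_u(f)\mathfrak{B}_u(g) = 0$ and $\|\mathfrak{A}_u(f)\| = \|\mathfrak{B}_u(f)\| = \|f\|$ exactly as in \eqref{rn3}, I obtain for $n \ge n_0$ the pointwise bound
\begin{align*}
\|\YY^\vartheta_n - \QQ^\vartheta_n\| \le \max\Bigl\{ \bigl(\tfrac{\JJ_n^\vartheta}{n}\bigr)^{1-\alpha/2} \|\YY^{\vartheta 0}_{\JJ_n^\vartheta} - \QQ^{\vartheta 0}_{\JJ_n^\vartheta}\|,\ \bigl(\tfrac{n - \JJ_n^\vartheta}{n}\bigr)^{1-\alpha/2} \|\YY^{\vartheta 1}_{n - \JJ_n^\vartheta} - \QQ^{\vartheta 1}_{n - \JJ_n^\vartheta}\| \Bigr\} + D_n^\vartheta,
\end{align*}
where $D_n^\vartheta := |\pS_n^\vartheta - \NN_\vartheta| + n^{-(1-\alpha/2)}|\TT^\vartheta_n - k|$ collects the new ``input'' error at the root of the recursion; here I use that the two supports are disjoint, so the $L^\infty$-norm of the sum is the max of the two norms.

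Set $a_n := \sup_{\vartheta} \E{\|\YY^\vartheta_n - \QQ^\vartheta_n\|^2}$ (which is finite and independent of $\vartheta$ by the identical-distribution properties established for both families). Squaring the displayed bound, using $(x+y)^2 \le (1+\varepsilon)x^2 + (1 + \varepsilon^{-1})y^2$, taking expectations, conditioning on $\JJ_n^\vartheta$, and applying Lemma~\ref{lem:split} together with the convergence $\E{|\pS_n^\vartheta - \NN_\vartheta|^2} \to 0$ from the embedding and $\|\TT_n\|_2 = \bo(k) = \bo(n^\alpha)$ from \eqref{rn_con_med} (so $n^{-(1-\alpha/2)}\|\TT_n - k\|_2 = \bo(n^{\alpha - 1 + \alpha/2}) \to 0$ since $\alpha \le 1/2$), one arrives at a recursive inequality roughly of the form
\begin{align*}
a_n \le (1+\varepsilon)\, \E{\max\{ (\JJ_n/n)^{2-\alpha} a_{\JJ_n},\ ((n-\JJ_n)/n)^{2-\alpha} a_{n - \JJ_n} \}} + C_\varepsilon\, \eta_n,
\end{align*}
with $\eta_n \to 0$. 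Bounding the maximum by the sum and using $\E{(\JJ_n/n)^{2-\alpha} + ((n-\JJ_n)/n)^{2-\alpha}} \le 2\cdot 2^{-(2-\alpha)}\cdot(1 + o(1)) = 2^{\alpha-1}(1+o(1)) < 1$ (since $\JJ_n/n \to 1/2$ and, more carefully, $\E{(\JJ_n/n)^{2-\alpha}}$ and $\E{((n-\JJ_n)/n)^{2-\alpha}}$ both converge to $(1/2)^{2-\alpha}$ by bounded convergence, as the ratios lie in $[0,1]$), together with a uniform a priori bound $\sup_n a_n < \infty$ (which follows from $\sup_n \E{\|\YY_n\|^2} < \infty$ — itself obtained from the recursion as in the proof of Lemma~\ref{rnlem} — and $\sup_n \E{\|\QQ_n\|^2} < \infty$ similarly), a standard argument shows $a_n \to 0$: choosing $\varepsilon$ small so that $(1+\varepsilon)2^{\alpha-1} < 1$, the inequality $\limsup a_n \le (1+\varepsilon)2^{\alpha-1}\limsup a_n$ forces $\limsup a_n = 0$. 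One must handle the boundary indices $\JJ_n$ near $0$ or $n$ where the recursion bottoms out at the initial processes $\YY_i - \QQ_i = \YY_i$, $i < n_0$; but $\Prob{\JJ_n < n_0} + \Prob{\JJ_n > n - n_0} \to 0$ by Lemma~\ref{lem:split} and the relevant norms have bounded second moments, so this contributes a further $o(1)$ term.

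The main obstacle is the bookkeeping needed to pass from the pointwise $\omega$-wise bound to a clean closed recursion for $a_n$: one must verify (i) that the ``max'' really does replace the ``sum'' thanks to disjoint supports so that the contraction factor is genuinely $2^{\alpha - 1} < 1$ rather than something $\ge 1$, (ii) that $\E{(\JJ_n/n)^{2-\alpha}}$ and the symmetric quantity converge to $(1/2)^{2-\alpha}$ uniformly enough (bounded convergence suffices since $\JJ_n/n \to 1/2$ in probability and the integrand is bounded by $1$), and (iii) that the error feed-in $\eta_n$ genuinely vanishes, which is exactly where the hypotheses $\alpha \le 1/2$ and \eqref{rn_con_med} are used — for $\alpha = 1/2$ the term $n^{\alpha - 1 + \alpha/2}\|\TT_n - k\|_2/k \cdot k = \bo(n^{-1/4})$ still tends to zero, and $\E{|\pS_n - \NN|^2}\to 0$ is built into the Skorokhod embedding. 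Once the recursion $a_n \le (1+\varepsilon)2^{\alpha - 1}(1+o(1))\sup_{m} a_m' + o(1)$ (with the sup over the finitely many possible summand-indices, handled by a telescoping/induction-on-blocks argument or by a direct $\limsup$ argument) is in place, the conclusion $\E{\|\YY_n - \QQ_n\|^2}\to 0$ is immediate.
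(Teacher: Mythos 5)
Your proposal is correct and follows essentially the same route as the paper: the same coupling through the common $\JJ_n^\vartheta$ and $\NN_\vartheta$ so that the operators $\mathfrak{A}$, $\mathfrak{B}$ and the toggle terms align, the same contraction factor $\E{(\JJ_n/n)^{2-\alpha} + ((n-\JJ_n)/n)^{2-\alpha}} \to 2^{\alpha-1} < 1$, and the same two-stage argument (a priori boundedness by induction, then a $\limsup$ bootstrap), the only cosmetic difference being that the paper expands the square and handles the cross term by Cauchy--Schwarz where you use the triangle inequality plus Young's inequality. One small remark: your concern (i) is unfounded --- the max is not needed to obtain a factor below one, since the \emph{sum} of the two expectations already tends to $2\cdot(1/2)^{2-\alpha} = 2^{\alpha-1} < 1$ because the exponent $2-\alpha$ exceeds $1$, which is exactly what your own computation (and the paper's) uses.
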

\begin{prop} \label{prop2}
As $n \to \infty$, we have $\E{\| \QQ_n - \RR_n\|^2} \to 0$.
\end{prop}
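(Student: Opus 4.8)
The plan is to bound $\|\QQ_n - \RR_n\|$ recursively, exploiting that $\QQ^\vartheta_n$ and $\RR^\vartheta_n$ satisfy the \emph{same} recurrence except that in $\QQ^\vartheta_n$ the prefactors of the two recursive terms are $(\JJ_n^\vartheta/n)^{1-\alpha/2}$ and $((n-\JJ_n^\vartheta)/n)^{1-\alpha/2}$, whereas in $\RR^\vartheta_n$ they are both $(1/2)^{1-\alpha/2}$. Writing $\JJ_n := \JJ_n^\epsilon$ and subtracting \eqref{ren_def_rr} from the recurrence for $\QQ_n$, one gets $\QQ_n - \RR_n$ as a sum of two boundary-disjoint contributions: a ``propagation'' term carrying $(1/2)^{1-\alpha/2}\,\mathfrak{A}_{\JJ_n/n}(\QQ^{0}_{\JJ_n} - \RR^{0}_{\JJ_n})$ plus the analogous $\mathfrak B$ term, and an ``error'' term of the form $\big((\JJ_n/n)^{1-\alpha/2} - (1/2)^{1-\alpha/2}\big)\mathfrak{A}_{\JJ_n/n}(\QQ^{0}_{\JJ_n})$ plus its $\mathfrak B$ counterpart. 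Using $\mathfrak A_u(f)\mathfrak B_u(g)=0$ and $\|\mathfrak A_u(f)\| = \|\mathfrak B_u(f)\| = \|f\|$ (as in the proof of Lemma \ref{rnlem}), taking squared supremum norms and then expectations turns this into an inequality for $a_n := \E{\|\QQ_n - \RR_n\|^2}$ of the recursive shape
\begin{align*}
a_n \le \left(\tfrac12\right)^{2-\alpha}\E{ a_{\JJ_n} + a_{n-\JJ_n} } + \varepsilon_n,
\end{align*}
where $\varepsilon_n$ collects the error term, and one uses independence of $\JJ_n$ from the families $\{\QQ^{0}_\bullet\}, \{\RR^{0}_\bullet\}$ so that the conditional expectation given $\JJ_n$ factorizes.

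Next I would estimate $\varepsilon_n$. By the elementary bound $|x^{1-\alpha/2} - (1/2)^{1-\alpha/2}| \le C|x - 1/2|$ on a neighbourhood of $1/2$ (with $\alpha \le 1/2$ the exponent lies in $(3/4,1)$, so the function is Lipschitz there), and controlling the rare event $\{\JJ_n/n \text{ far from } 1/2\}$ via Lemma \ref{lem:split} and moment bounds, one has $\E{ (\JJ_n/n - 1/2)^2 } = \bo(n^{-\alpha})$, indeed $\pS_n^\epsilon \to \NN_\epsilon$ in $L^2$ so $\JJ_n/n - 1/2 = \pS_n^\epsilon n^{-\alpha/2}/2 \to 0$ in $L^2$ at rate $n^{-\alpha/2}$. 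Combined with $\E{\|\QQ^{0}_{\JJ_n}\|^2}$ being uniformly bounded — which follows because $\QQ^\vartheta_n$ obeys the same contraction estimate \eqref{rn1}-type recursion as $Z_n^\vartheta$, so $\sup_n \E{\|\QQ_n\|^2} < \infty$ by the argument in Lemma \ref{rnlem} — a Cauchy--Schwarz split (or Hölder with a slightly higher moment, using that all moments of $\|\QQ_n\|$ are uniformly bounded by the same reasoning) gives $\varepsilon_n = \bo(n^{-\alpha/2})$, hence $\varepsilon_n \to 0$.

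Finally I would run the recursion to conclude $a_n \to 0$. Let $b_n := \sup_{m \le n} a_m$ for the finitely many small indices and $\bar a := \limsup_{n} a_n$. Since $\JJ_n \to \infty$ and $n - \JJ_n \to \infty$ almost surely and in probability (both are $n/2 + \bo_P(n^{1-\alpha/2})$ by Lemma \ref{lem:split}), and the $a_m$ are bounded, a standard argument — condition on $\JJ_n$, split according to whether $\min(\JJ_n, n-\JJ_n)$ exceeds a large threshold $N$, use $\E{a_{\JJ_n}\I{\JJ_n \le N}} \le (\max_{m\le N} a_m)\Prob{\JJ_n \le N} \to 0$ — yields $\bar a \le (1/2)^{2-\alpha}\cdot 2 \cdot \bar a = 2^{\alpha-1}\bar a$. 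As $\alpha \le 1/2 < 1$, the factor $2^{\alpha-1} < 1$ forces $\bar a = 0$, i.e.\ $\E{\|\QQ_n - \RR_n\|^2} \to 0$. The main obstacle is the bookkeeping in the second step: one must ensure the error term is genuinely $o(1)$ uniformly, which requires the uniform-in-$n$ moment bounds on $\|\QQ_n\|$ (obtained exactly as in Lemma \ref{rnlem}, since the operators $\mathfrak A_{\JJ_n/n}, \mathfrak B_{\JJ_n/n}$ have operator norm $1$ and the prefactors satisfy $(\JJ_n/n)^{2-\alpha} + ((n-\JJ_n)/n)^{2-\alpha} \le 2 (1/2)^{2-\alpha}\cdot(1+o(1))$ — a point that itself needs a short argument because this sum can slightly exceed $2(1/2)^{2-\alpha}$ when $\JJ_n/n \ne 1/2$, so one should instead bound it by $1$ crudely or handle the discrepancy as part of $\varepsilon_n$) and careful control of the Lipschitz estimate away from $u = 1/2$.
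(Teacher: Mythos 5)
Your proposal is correct and follows essentially the same route as the paper's proof: the same propagation/error decomposition of $\QQ_n-\RR_n$, Cauchy--Schwarz for the cross term, and the standard boundedness-then-limsup contraction argument. The only (immaterial) difference is that you attach the coefficient discrepancy $(\JJ_n/n)^{1-\alpha/2}-(1/2)^{1-\alpha/2}$ to the $\QQ$-processes and hence need $\sup_n\E{\|\QQ_n\|^2}<\infty$ via a Proposition~\ref{prop1}-style induction, whereas the paper attaches it to the $\RR$-processes and gets the required moment bound for free from $\|\RR_n\|\le\|\ZZ_n\|$ and Lemma~\ref{rnlem}.
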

\begin{prop} \label{prop3}
As $n \to \infty$, we have $d_{sk}(\RR_n, \ZZ_n) \to 0$ in probability.
\end{prop}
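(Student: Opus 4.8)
The plan is to truncate both processes at a fixed recursion depth $m$, compare the truncations by an explicit deformation of the time scale, and let $m\to\infty$ afterwards. The point is that $\RR_n$ and $\ZZ_n$ both carry infinitely many small jumps at slightly displaced locations, so $\|\RR_n-\ZZ_n\|$ does not tend to $0$ and a genuine (but asymptotically negligible) time change is needed, whereas only finitely many jumps can be aligned by a concrete map. For $m\in\N$ let $\RR_n^{(m)}$ be obtained from \eqref{ren_def_rr} by replacing the two terms involving $\RR^{\vartheta0}$ and $\RR^{\vartheta1}$ by $0$ once depth $m$ is reached; equivalently $\RR_n^{(m)}$ retains only the contributions of the tree nodes $\vartheta$ with $|\vartheta|<m$ in the representation of $\RR_n$ as a finite sum over tree nodes, which is the exact analogue of \eqref{rn_z_rep}. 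By \eqref{rn_z_rep} the same truncation of $\ZZ_n$ equals $\ZZ_m$ for every $n\ge m$. Since $d_{sk}\le\|\cdot\|$ and $d_{sk}$ obeys the triangle inequality,
\begin{align*}
 d_{sk}(\RR_n,\ZZ_n)\le\|\RR_n-\RR_n^{(m)}\|+d_{sk}(\RR_n^{(m)},\ZZ_m)+\|\ZZ_n-\ZZ_m\|,
\end{align*}
so it is enough to bound the first and third terms in $L_2$ uniformly in $n$ with bounds vanishing as $m\to\infty$, and to show $d_{sk}(\RR_n^{(m)},\ZZ_m)\to0$ in probability for each fixed $m$.

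The third term is controlled exactly as in the proof of Lemma \ref{rnlem}: $\E{\|\ZZ_n-\ZZ_m\|^2}^{1/2}\le\sum_{k>m}\E{\|\ZZ_k-\ZZ_{k-1}\|^2}^{1/2}\le\sum_{k>m}2^{-(1-\alpha)(k-1)/2}$, which is independent of $n\ge m$ and tends to $0$. For the first term one repeats the estimate \eqref{rn3}, using that $\mathfrak{A}_u,\mathfrak{B}_u$ preserve $\|\cdot\|$ and have disjoint supports and that the jump term $\I{t<\JJ_n^\vartheta/n}\NN_\vartheta-\I{t\ge\JJ_n^\vartheta/n}\NN_\vartheta$ cancels in $\RR_n^\vartheta-\RR_n^{(m),\vartheta}$; conditioning on $\JJ_n^\vartheta$ and writing $a_{n,m}:=\E{\|\RR_n^\vartheta-\RR_n^{(m),\vartheta}\|^2}$ (independent of $\vartheta$) gives $a_{n,m}\le2^{-(1-\alpha)}\sup_{j\le n}a_{j,m-1}$, hence $\sup_n a_{n,m}\le2^{-(1-\alpha)m}\sup_n\E{\|\RR_n\|^2}$. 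Here $\sup_n\E{\|\RR_n\|^2}<\infty$ follows from the same type of recursion since the relevant contraction factor $\sqrt{2}\,(1/2)^{1-\alpha/2}=2^{(\alpha-1)/2}$ is $<1$ for $\alpha\le1/2$.

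Now fix $m$. For $n$ large enough — which, $m$ being fixed, happens eventually almost surely, since $n_\vartheta\to\infty$ a.s.\ for each of the finitely many $\vartheta$ with $|\vartheta|<m$ — every node $\vartheta$ with $|\vartheta|<m$ has sublist size $n_\vartheta\ge n_0$, so the complete depth-$m$ binary tree underlies $\RR_n^{(m)}$. Then $\RR_n^{(m)}$ and $\ZZ_m$ are both step functions with $2^m$ constant pieces, and on the $w$-th piece ($w\in\{0,1\}^m$, in lexicographic order) both take the same value $\sum_{j=0}^{m-1}(1/2)^{(1-\alpha/2)j}(-1)^{w_{j+1}}\NN_{w_1\cdots w_j}$: for $\ZZ_m$ on the dyadic interval $B_w$, and for $\RR_n^{(m)}$ on the interval $I_w$ arising from the nested splits with relative lengths $\JJ_{n_\vartheta}^\vartheta/n_\vartheta$ and $1-\JJ_{n_\vartheta}^\vartheta/n_\vartheta$ along the path to $w$. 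Hence $\RR_n^{(m)}=\ZZ_m\circ\lambda_n$, where $\lambda_n$ is the increasing, piecewise-linear homeomorphism of $[0,1]$ mapping each $I_w$ onto $B_w$, so that $d_{sk}(\RR_n^{(m)},\ZZ_m)\le\|\lambda_n-\mathrm{id}\|$. Finally $\|\lambda_n-\mathrm{id}\|\to0$ a.s.: because $\JJ_k^\vartheta/k=\pS_k^\vartheta/(2k^{\alpha/2})+1/2$ and $\pS_k^\vartheta\to\NN_\vartheta$ a.s.\ (so $\sup_k|\pS_k^\vartheta|<\infty$ a.s.), an induction on the depth gives simultaneously $n_\vartheta\to\infty$ a.s.\ and $\JJ_{n_\vartheta}^\vartheta/n_\vartheta\to1/2$ a.s.\ for all $|\vartheta|<m$; therefore the $2^m$ lengths $|I_w|$ converge to $2^{-m}$ a.s., the endpoints of the $I_w$ converge to those of the $B_w$, and $\|\lambda_n-\mathrm{id}\|\to0$ a.s. Combining the three bounds with Markov's inequality and letting first $m\to\infty$, then $n\to\infty$, yields $d_{sk}(\RR_n,\ZZ_n)\to0$ in probability.

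The main obstacle I anticipate lies in the third paragraph: setting up the finite random tree underlying $\RR_n^{(m)}$ cleanly, verifying that $\RR_n^{(m)}$ and $\ZZ_m$ assume exactly the same values on corresponding pieces, and — above all — transferring the almost sure convergence $\pS_k^\vartheta\to\NN_\vartheta$ through the random, nested sublist sizes $n_\vartheta$ (in particular establishing $n_\vartheta\to\infty$ a.s.\ for each fixed $\vartheta$) so as to conclude that the aligning time change is uniformly close to the identity.
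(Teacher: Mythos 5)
Your proof is correct and follows essentially the same route as the paper: truncate at a fixed finite recursion depth, align the finitely many resulting jump points of $\RR_n$ with the dyadic jump points of the truncated limit via a piecewise-linear time change $\lambda_n$ with $\|\lambda_n-\mathrm{id}\|\to0$ (driven by $\JJ^\vartheta_{n_\vartheta}/n_\vartheta\to1/2$ a.s.), and control the two truncation errors. The only organizational difference is that you bound the tail $\|\RR_n-\RR_n^{(m)}\|$ by an explicit $L_2$ contraction recursion, whereas the paper handles it implicitly through the pathwise domination $\|\RR_n\|\le\|\ZZ_n\|$ and the almost-sure Cauchy property of $(\ZZ_n)$ on a high-probability event; both are valid.
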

These three propositions immediately 
yield $d_{sk}(\YY_n, \ZZ) \to 0$ in probability and thus Theorem \ref{rn_thm_in1}. From this Theorem \ref{rn_thm_in2} follows from Theorem \ref{rn_thm_in1} and Lemma \ref{lem3}.  The proof of Theorem \ref{thm_sup} is given at the
end of this section.  Corollary \ref{cormain2} gives additional information. Here, for the sake of completeness, we formulate with a general the parameter $c > 0$ as in
Theorem \ref{rn_thm_in1} and Theorem \ref{rn_thm_in2}.
\begin{cor} \label{cormain2}
Let $t \notin \Dy$.
If $t_n \to t$ then $\YY_n(t_n) \to \ZZ(t)$ in probability with convergence of all moments. Thus, for all $(\ell_n)_{n\ge 1}$ with $\ell_n\in\{1,\ldots,n\}$ and $\ell_n / n \to t$ we have $$\frac{X_n^{(2)}(\ell_n) - 2n}{n^{1-\alpha/2}/\sqrt{c}} \stackrel{d}{\longrightarrow} \mathcal{N}\left(0,\frac{1}{1-\kappa}\right) \quad (n\to\infty)$$
together with convergence of all moments. The same is true for the $3$-version $X_n^{(3)}$.
\end{cor}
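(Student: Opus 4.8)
The plan is to derive the statement as a direct consequence of Propositions \ref{prop1}--\ref{prop3} (which already give $d_{sk}(\YY_n,\ZZ)\to 0$ in probability, hence $d_{sk}(\YY_n,\ZZ)\to 0$ in probability under the general normalization by $n^{1-\alpha/2}/\sqrt c$ as well) together with the tightness-in-$L^p$ bounds produced in their proofs. First I would record that, since $t\notin\Dy$, the limit process $Z$ is almost surely continuous at $t$ (by the Corollary following Theorem \ref{rn_gau_pro}). Consequently the evaluation map $f\mapsto f(t)$ is continuous at $Z$ for $\Law(Z)$-almost every path, and the evaluation map $f\mapsto f(t_n)$ applied along a sequence $t_n\to t$ is, by the definition of the Skorokhod topology, asymptotically equivalent to $f\mapsto f(t)$ on paths continuous at $t$. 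Hence $d_{sk}(\YY_n,\ZZ)\to 0$ in probability and $t_n\to t$ together force $\YY_n(t_n)-\ZZ(t)\to 0$ in probability; this is the ``convergence in probability'' half of the claim. The distributional statement then follows because, by Theorem \ref{rn_gau_pro}, $\ZZ(t)=Z(t)$ is centered Gaussian with variance $\sigma(t,t)=\sum_{i\ge 0}\kappa^i=1/(1-\kappa)$ (using $j(t,t)=\infty$ and $\kappa^\infty=0$ in \eqref{rn5}), which gives the $\mathcal N(0,1/(1-\kappa))$ limit; restoring a general $c>0$ only rescales the normalizing sequence and not the limit, since $c$ enters the normalization exactly through the factor $\sqrt c$.

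The second, and slightly more delicate, point is upgrading convergence in probability to convergence of all moments; this is the step I expect to be the main (albeit modest) obstacle. The standard route is to show that the family $\{|\YY_n(t_n)|^p\}_{n}$ is uniformly integrable for every $p\ge 1$, which follows once one has $\sup_n \E{\|\YY_n\|^q}<\infty$ for every $q$. Such a uniform bound is exactly what the proofs of Propositions \ref{prop1} and \ref{prop2} deliver in $L^2$; to get it for all $q$ one reruns the same contraction estimates in $L^q$-norm, exactly as in the ``Similar arguments apply for higher moments'' remark at the end of the proof of Lemma \ref{rnlem} and using condition \eqref{rn_con_med} which holds for all $p\ge 1$. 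With $\sup_n\E{\|\YY_n\|^q}<\infty$ and $\sup_n\E{\|\ZZ\|^q}<\infty$ (Lemma \ref{rnlem}), the sequence $\YY_n(t_n)-\ZZ(t)\to 0$ in probability together with bounded $(p+1)$-st moments yields $\E{|\YY_n(t_n)-\ZZ(t)|^p}\to 0$, hence convergence of all moments of $\YY_n(t_n)$ to those of $\mathcal N(0,1/(1-\kappa))$.

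Finally, the translation into the statement about ranks is immediate: if $\ell_n\in\{1,\dots,n\}$ with $\ell_n/n\to t$, set $t_n:=(\ell_n-1)/n$, so that $\lfloor t_n n\rfloor+1=\ell_n$ and $t_n\to t$; applying the above with this choice of $t_n$ and reinserting the factor $\sqrt c$ gives
\begin{align*}
\frac{X_n^{(2)}(\ell_n)-2n}{n^{1-\alpha/2}/\sqrt c}\;\convdist\;\mathcal N\!\left(0,\frac{1}{1-\kappa}\right),
\end{align*}
with convergence of all moments. For the $3$-version one invokes Lemma \ref{lem3} (which transfers the results of Theorem \ref{rn_thm_in1} to Theorem \ref{rn_thm_in2}) together with the same one-dimensional argument, noting that the discrepancy between the $2$- and $3$-version processes is controlled in $L^p$ uniformly in $n$, so the moment convergence is preserved. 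The whole argument is short; its only genuine content is the continuity of $Z$ at non-dyadic $t$ and the uniform $L^q$-boundedness, both already available from the earlier sections.
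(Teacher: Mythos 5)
Your proposal is correct and follows essentially the same route as the paper: convergence in probability of $\YY_n(t_n)$ to $\ZZ(t)$ from the Skorokhod convergence together with the almost sure continuity of $Z$ at non-dyadic $t$ (the paper merely routes the first step through $\QQ_n$ via Proposition \ref{prop1} before invoking Propositions \ref{prop2} and \ref{prop3}), then uniform boundedness of $\E{\|\YY_n\|^m}$ for all $m$ obtained by rerunning the contraction/recursion estimates to upgrade to moment convergence, the variance $1/(1-\kappa)$ read off from Theorem \ref{rn_gau_pro}, and Lemma \ref{lem3} for the $3$-version.
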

The rest of this section contains the proofs of our statements.
\begin{proof}[Proof of Proposition \ref{prop1}]
By construction, we have
\begin{align}
\lefteqn{(\YY_n(t) - \QQ_n(t))^2} \nonumber\\
&= \left( \frac{\JJ_n}{n} \right)^{2-\alpha} \left(\mathfrak{A}_\frac{\JJ_n}{n} \left(\YY^{(0)}_{\JJ_n} - \QQ^{(0)}_{\JJ_n}\right)\right)^2  + \left( \frac{n - \JJ_n}{n} \right)^{2-\alpha} \left(\mathfrak{B}_\frac{\JJ_n}{n} \left(\YY^{(1)}_{ n-\JJ_n} - \QQ^{(1)}_{n-\JJ_n}\right)\right)^2 \nonumber\\
&\quad~+ \frac{(\TT_n-k)^2}{n^{2-\alpha}} +  \I{t < \JJ_n /n}(\pS_n - \NN)^2 +   \I{t \geq \JJ_n /n} (\pS_n - \NN)^2\label{rn_su1}\\
&\quad~+ 2 \frac{\TT_n-k}{n^{1-\alpha/2}} \left( \I{t < \JJ_n /n}(\pS_n - \NN) -  \I{t \geq \JJ_n /n} (\pS_n - \NN)\right)\label{rn_su2} \\
&\quad~+ 2\left( \frac{\TT_n-k}{n^{1-\alpha/2}} + \I{t < \JJ_n /n}(\pS_n - \NN) -   \I{t \geq \JJ_n /n} (\pS_n - \NN) \right)
\label{rn_su3}\\
&\quad \quad\quad \times \left( \left( \frac{\JJ_n}{n} \right)^{1-\alpha/2} \mathfrak{A}_\frac{\JJ_n}{n} \left(\YY^{(0)}_{\JJ_n} - \QQ^{(0)}_{\JJ_n}\right)
 + \left( \frac{n - \JJ_n}{n} \right)^{1-\alpha/2} \mathfrak{B}_\frac{\JJ_n}{n} \left(\YY^{(1)}_{ n-\JJ_n} - \QQ^{(1)}_{n-\JJ_n}\right) \right)  \nonumber
 \end{align}
We now take the supremum over $t \in [0,1]$ and the expectation on both sides. Then, by construction, the summands in lines (\ref{rn_su1}) and (\ref{rn_su2}) vanish as $n \to \infty$. Using the Cauchy-Schwarz inequality for the product in (\ref{rn_su3}) and furthermore $\|\mathfrak{A}_u\|=\|\mathfrak{B}_u\|=1$ we obtain altogether that
\begin{align}
\lefteqn{\E{\|\YY_n - \QQ_n\|^2}} \nonumber \\
& \leq   \E{ \left( \frac{\JJ_n}{n} \right)^{2-\alpha} \left\|  \YY^{(0)}_{\JJ_n} - \QQ^{(0)}_{\JJ_n}\right\|^2} + \E{ \left( \frac{n - \JJ_n}{n} \right)^{2-\alpha} \left\|\YY^{(1)}_{ n-\JJ_n} - \QQ^{(1)}_{n-\JJ_n}\right\|^2} \nonumber \\
&\quad~+ \varepsilon_n  \left( \E{ \left( \frac{\JJ_n}{n} \right)^{2-\alpha} \left\|  \YY^{(0)}_{\JJ_n} - \QQ^{(0)}_{\JJ_n}\right\|^2} + \E{ \left( \frac{n - \JJ_n}{n} \right)^{2-\alpha} \left\|\YY^{(1)}_{ n-\JJ_n} - \QQ^{(1)}_{n-\JJ_n}\right\|^2} \right)^{1/2}\nonumber \\
&\quad~+ \varepsilon'_n, \label{eq0}
\end{align}
where $\varepsilon_n, \varepsilon'_n \to 0$. Now, the arguments to infer $\E{\|\YY_n - \QQ_n\|^2} \to 0$ are standard in the framework of the contraction method. In a first step, one shows that the sequence $\Delta_n := \E{\|\YY_n - \QQ_n\|^2}$ is bounded. To this end, assume that $\Delta_m \leq C$ for all $m < n$ with $C \geq 1$. Then, the last display implies
\begin{align*}
\Delta_n \leq C \Bigg( &\E{ \left( \frac{\JJ_n}{n} \right)^{2-\alpha} +  \left( \frac{n - \JJ_n}{n} \right)^{2-\alpha}} \\
&~+ \varepsilon_n \left(\E{ \left( \frac{\JJ_n}{n} \right)^{2-\alpha} +  \left( \frac{n - \JJ_n}{n} \right)^{2-\alpha}}\right)^{1/2} \Bigg) + \varepsilon'_n.
\end{align*}
As $\lim_{n \to \infty} \E{ \left( \frac{\JJ_n}{n} \right)^{2-\alpha} +  \left( \frac{n - \JJ_n}{n} \right)^{2-\alpha}} = 2^{-(1-\alpha)}<1$, we can
deduce $\Delta_n \leq C$ for all sufficiently large  $n$. Then, one shows that $\limsup_{n \to \infty} \Delta_n = 0$ as follows. Start with
denoting $D = \sup_{n \geq 0} \Delta_n$ and $\beta = \limsup_{n \to \infty} \Delta_n.$ Let $\delta > 0$ be arbitrary and $\ell$ large enough such that
$\Delta_n \leq \beta + \delta$ and $\E{ \left( \frac{\JJ_n}{n} \right)^{2-\alpha} +  \left( \frac{n - \JJ_n}{n} \right)^{2-\alpha}} \leq 2^{-(1-\alpha)} + \delta$ for all $n\geq \ell$. Moreover, we can assume $n$ to be large enough to satisfy $\Prob{\ell \leq \JJ_n \leq n -\ell} \geq 1- \delta$. Then, \eqref{eq0} implies
\begin{align*}
\Delta_n \leq D \delta + (\beta + \delta)(2^{-(1-\alpha)} + \delta) + \varepsilon_n \left(D \delta + (\beta + \delta)(2^{-(1-\alpha)} + \delta)\right)^{1/2} + \varepsilon'_n
\end{align*}
Taking the limit superior on both sides and then letting $\delta \downarrow 0$ shows $\beta \leq 2^{-(1-\alpha)} \beta$. Thus, $\beta = 0$.
\end{proof}
\begin{proof}[Proof of Proposition \ref{prop2}]
By definition, we have
\begin{align*}
\lefteqn{\|\QQ_n - \RR_n\|}\\
& \leq \left \| \left( \left( \frac{\JJ_n}{n} \right)^{1-\alpha/2} - \left( \frac 1 2 \right)^{1-\alpha/2} \right) \mathfrak{A}_\frac{\JJ_n}{n}\left( \RR^{(0)}_{\JJ_n}\right) \right.\\
 &\quad \left.~+ \left( \left( \frac{n- \JJ_n}{n} \right)^{1-\alpha/2} - \left( \frac 1 2 \right)^{1-\alpha/2} \right) \mathfrak{B}_\frac{\JJ_n}{n}\left( \RR^{(1)}_{n-\JJ_n}\right)  \right \| \\
&\quad ~+ \left \| \left( \frac{\JJ_n}{n} \right)^{1-\alpha/2} \mathfrak{A}_\frac{\JJ_n}{n} \left(  \RR^{(0)}_{\JJ_n} - \QQ^{(0)}_{\JJ_n} \right) +\left( \frac{n- \JJ_n}{n} \right)^{1-\alpha/2} \mathfrak{B}_\frac{\JJ_n}{n} \left( \RR^{(1)}_{ n-\JJ_n} - \QQ^{(1)}_{n-\JJ_n}\right) \right \|
\end{align*}
Let $\varepsilon''_n$ be the second moment of the first summand in the latter display. By construction, we have $\| \RR_n \| \leq \| \ZZ_n \|$ for all $n \in \N$. Thus, Lemma \ref{rnlem} implies that the sequence  $\E{\| \RR_n\|^2}$ is  bounded.
Using the Cauchy-Schwarz inequality, we infer that $\varepsilon''_n \to 0$ as $n \to \infty$. Yet another application of the Cauchy-Schwarz inequality shows
\begin{align*}
\lefteqn{\E{\|\QQ_n - \RR_n\|^2}} \\
 &\leq \E{  \left( \frac{\JJ_n}{n} \right)^{2-\alpha} \left\|  \QQ^{(0)}_{\JJ_n} - \RR^{(0)}_{\JJ_n}\right\|^2} + \E{ \left( \frac{n- \JJ_n}{n} \right)^{2-\alpha} \left\|\QQ^{(1)}_{ n-\JJ_n} - \RR^{(1)}_{n-\JJ_n}\right\|^2} \\
&\quad~+ \sqrt{\varepsilon''_n} \left( \E{  \left( \frac{\JJ_n}{n} \right)^{2-\alpha} \left\|  \QQ^{(0)}_{\JJ_n} - \RR^{(0)}_{\JJ_n}\right\|^2} + \E{ \left( \frac{n- \JJ_n}{n} \right)^{2-\alpha} \left\|\QQ^{(1)}_{ n-\JJ_n} - \RR^{(1)}_{n-\JJ_n}\right\|^2} \right)^{1/2} \\
&\quad~+ \varepsilon''_n.
\end{align*}
The result now follows by an argument similar to the proof of Proposition \ref{prop1}.
\end{proof}
\begin{proof}[Proof of Proposition \ref{prop3}]
Let $\varepsilon >0$. By Lemma \ref{rnlem} there exists  an $n_1\in\N$  such that
$$\Prob{ \sup_{n \geq n_1} \|\ZZ_n - \ZZ_{n_1}\| > \varepsilon} \leq \varepsilon.$$
Let $n \geq n_1$. When applying the recurrence (\ref{ren_def_rr}) for $\RR_n$ iteratively $n_1$ times we obtain a representation of $\RR_n$ with at most $2^{n_1}$ summands. Each summand corresponds to one of the $2^{n_1}$ sublists (some possibly being empty) generated by the algorithm in the first $n_1$ recursive steps.  Let $A_n$ denote  the event that  each of these $2^{n_1}$ sublists has size at least $n_0$. On $A_n$ the split into these first $2^{n_1}$ sublists causes $2^{n_1}-1$ points of discontinuity of $\RR_n$ which we denote by $0<T_n^1<T_n^2<\cdots<T_n^{2^{n_1}-1}$. In fact, in general $\RR_n$ has additional points of discontinuity caused by splits when further unfolding the recurrence (\ref{ren_def_rr}). Moreover, we denote the points of discontinuity of $\ZZ_{n_1}$ by $\tau_n^k = k /2^{{n_1}}$ for $k = 1, \ldots, 2^{n_1} -1$.

By Lemma \ref{lem:split} we have
$\JJ_n^\vartheta / n \to 1/2$ for each $\vartheta \in \Theta$ almost surely, hence
\begin{align}\label{rn_event}
\Prob{A_n \cap \bigcap_{k = 1}^{2^{n_1}-1} \{|T_n^k - \tau_n^k| < \varepsilon\}} \to 1, \quad (n\to\infty).
\end{align}
To bound the Skorokhod distance  between $\RR_n$ and $\ZZ_n$ we define a deformation of time as follows:
On $A_n$ let $\lambda_n : [0,1] \to [0,1]$ be defined by $\lambda_n(0) :=0, \lambda_n(1) := 1, \lambda_n(\tau_n^k) = T_n^k$ for  $k = 1, \ldots, 2^{n_1}-1$ and linear in between these points. Then, with $\mathrm{id}$ the identity $t\mapsto t$ on $[0,1]$ we have on the event in (\ref{rn_event}) that $\|\lambda_n - \mathrm{id}\|<\varepsilon$.  This implies for all $n\ge n_1$ that
$$\bigcap_{m \geq n_1} \{\|\ZZ_m  - \ZZ_{n_1}\| < \varepsilon\} \cap A_n \cap \bigcap_{k = 1}^{2^{n_1}-1} \{|T_n^k - \tau^k| < \varepsilon\} \subseteq \{d_{sk}(\RR_n,\ZZ_n) \leq 2 \varepsilon\}.$$
To see  this, note that on event on the left hand side, we
 have $\|\lambda_n- \mathrm{id}\| \le \varepsilon$ and
$$\|\RR_n \circ\lambda_n - \ZZ_n\| \leq \|\RR_n\circ\lambda_n - \ZZ_{n_1}\| + \|\ZZ_{n_1}- \ZZ_{n}\|\le 2\varepsilon.$$
Thus, for all $n$ sufficiently large,
$\Prob{d_{sk}(\RR_n, \ZZ_n) \leq 2 \varepsilon} \geq 1- 2 \varepsilon$.
\end{proof}
\begin{proof}[Proof of Corollary \ref{cormain2}]
Let $t \in [0,1]\setminus \Dy$ and $(t_n)_{n\ge 1}$ a sequence in $[0,1]$ with $t_n \to t$. By Proposition \ref{prop1} we have $\E{|\YY_n(t_n) - \QQ_n(t_n)|^2} \to 0$ as $n\to\infty$. Moreover, $d_{sk}(\QQ_n, \ZZ) \to 0$ in probability by Propositions \ref{prop2} and \ref{prop3}. As  $\ZZ$ is almost surely continuous at $t$, it follows that
$\YY_n(t_n) \to \ZZ(t)$ in probability. Based on the uniform boundedness of the sequence $\E{\|\YY_n\|^2}$ a simple induction relying on its recursive definition shows that $\sup_{n\ge 1}\E{\|\YY_n\|^m}<\infty$  for all
$m \in \N$. This implies the result for the 2-version. The statement about the 3-version follows from this and Lemma \ref{lem3}.
\end{proof}
\begin{proof}[Proof of Theorem \ref{thm_sup}]
Distributional convergence for the $2$-version follows directly from Theorem \ref{rn_thm_in1}. The proof of Theorem \ref{rn_thm_in1} has also revealed that
$\|Z\|$ has finite moments of all orders and that the sequences $(\|\YY_n\|)_{n \geq 1}$  and  $(\|\QQ_n\|)_{n \geq 1}$ are both bounded in $L_p$ for any $1\le p <\infty$. This shows the claim of Theorem \ref{thm_sup} for the
$2$-version. An alternative approach which works for both the $2$- and the $3$-version relies on the contraction method for $\max$-type recurrences.
This  is based on the distributional recurrence $$
W_n \stackrel{d}{=} \max(W_{I_n-1}, \widetilde W_{n-I_n}) + n -k + T_n,$$
where $W_n := \sup_{1 \leq \ell \leq n} X_n^{(3)}(\ell)$ and $(\widetilde W_n)_{n \geq 0}$ is an independent copy of $(W_n)_{n \geq 1}$, both independent of
$(I_n,T_n)$. The latter display allows to deduce Theorem \ref{thm_sup} straightforwardly from Theorem 4.6 in \cite{Ru06} together with the characterization of
$\|Z\|$ given in Corollary \ref{cor_sup}.
\end{proof}
\section{Further properties of the limit process}
\label{rn_sec4}

In this section we  first study the supremum of the limit process and derive tail bounds. Then path properties of the limit process $Z$ are
investigated.  Here, first, the variation of the limit process $Z$ is studied. Then, we will endow the unit interval with an alternative metric $d_\kappa$
such that $Z$ has continuous paths with respect to $d_\kappa$. This allows to study  the modulus of continuity and H{\"o}lder continuity properties. In Sections \ref{rn_sec43} and \ref{rn_sec42}, we make use of general results about path continuity and the supremum of Gaussian processes, see, e.g., Adler's book \cite{adler90}, and of the explicit construction of the limit process.

\subsection{The supremum of the limit process} \label{rn_sec43}
Let $S_n^\vartheta = \sup_{t \in [0,1]} Z_n^\vartheta(t)$ and $S^\vartheta = \sup_{t \in [0,1]} Z^\vartheta(t)$.  By the uniform convergence stated in Lemma \ref{rnlem} we have $S_n^\vartheta \rightarrow S^\vartheta$ almost surely. 
The first result concerns a  max-type recurrence for $S_n$ and characterizes the distribution of $S$ as solution of a stochastic fixed-point equation. To this end,
let $\Ms(\R)$ denote the set of probability measures on the real line,
 \begin{align*}
\Ms_p(\R):=\Big\{\mu \in \Ms(\R)\,\Big|\, \int |x|^p\,d\mu(x)<\infty\Big\}, \quad 1\le p<\infty,
 \end{align*}
and $T^* : \Ms(\R) \to \Ms(\R)$ be defined, for $\mu \in\Ms(\R)$, by
\begin{align*}
 T^*(\mu) := {\cal L} \left( (\kappa^{1/2} X_0 + N) \vee (\kappa^{1/2} X_1 - N) \right),
\end{align*}
where $\Law(X_0) = \Law(X_1) = \mu$, $N$ has the standard normal distribution and  $X_0,X_1, N$ are independent, and $\kappa = 2^{\alpha-2}$ (as above).
\begin{cor} \label{cor_sup}
Let $\vartheta \in \Theta$. We have
 \begin{align}
  S^\vartheta_{n+1} &= (\kappa^{1/2} S_{n}^{\vartheta0} + N_\vartheta) \vee (\kappa^{1/2} S_n^{\vartheta1} - N_\vartheta),\quad n\ge 1, \nonumber \\
 S^\vartheta &= (\kappa^{1/2} S^{\vartheta0} + N_\vartheta) \vee (\kappa^{1/2} S^{\vartheta1} - N_\vartheta)\quad \text{almost surely}.  \label{fix:sup1}
 \end{align}
The distribution of $S^\vartheta$ is the unique fixed-point of the restriction of $T^*$ to $\Ms_p(\R)$ for any $p>p_\alpha$ with $p_\alpha$ given in \eqref{const_p}.
\end{cor}
\begin{proof} The  recurrence for $S_n^\vartheta$ and the almost sure identity for $S^\vartheta$ follow by construction and Lemma \ref{rnlem}. The characterization of $\Law(S^\vartheta)$ is a special case of Theorem 3.4 in \cite{neru05}.
\end{proof}
It is a well-known  phenomenon that the supremum of a Gaussian process resembles a Gaussian random variable. This explains the following proposition.
\begin{prop}
For the supremum $S=\sup_{t\in[0,1]}Z(t)$ of the limit process $Z$ from Definition \ref{rn_nota} we have for any $t > 0$ that
\begin{align}
\Prob{|S - \E{S}|  \geq t} \leq 2 \exp\left( -\frac{1-\kappa}{2} t^2 \right)  \label{tail_S}
\end{align}
The same tail bounds are valid when $S$ is replaced by $S_n=\sup_{t\in[0,1]}Z_n(t)$ for any $n \in \N$.  The constant in the exponent on the right hand side of (\ref{tail_S}) is asymptotically optimal as $t \to \infty$
Moreover, we have
\begin{align*}
 \frac{\sqrt{2}}{\sqrt{\pi}(1-\sqrt{\kappa})} \le \E{S} \le \sqrt{\frac{2}{1-2\kappa}},  \quad \Var(S) \leq \frac{1}{1-\kappa}.
\end{align*}
For $\alpha = 1/2$, the first bound leads to $\E{S} \in [1.968\ldots, 2.613\ldots]$.
\end{prop}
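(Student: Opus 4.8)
The strategy is to derive each of the four inequalities from standard Gaussian-process tools applied to the explicit series representation \eqref{rn_z_rep}, combined with the covariance computation in Theorem \ref{rn_gau_pro}. The concentration bound \eqref{tail_S} is the Borell--TIS inequality: for a centered Gaussian process with a.s.\ bounded sample paths, $\Prob{|S - \E{S}| \ge t} \le 2\exp(-t^2/(2\sigma_*^2))$, where $\sigma_*^2 = \sup_{t\in[0,1]} \Var(Z(t))$. From \eqref{rn5} with $j(t,t) = \infty$ and the convention $\kappa^\infty = 0$ we get $\Var(Z(t)) = \sigma(t,t) = 1/(1-\kappa)$ for every $t$, so $\sigma_*^2 = 1/(1-\kappa)$, giving exactly the stated exponent. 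The identical statement for $S_n$ follows the same way since $Z_n$ is also a centered Gaussian process (Theorem \ref{rn_gau_pro}) and $\Var(Z_n(t)) = \sum_{|\vartheta|<n}\kappa^{|\vartheta|} \le 1/(1-\kappa)$; the bound is uniform in $n$. Asymptotic optimality of the constant: since each single coordinate $Z(t) \sim \mathcal N(0, 1/(1-\kappa))$, we have $\Prob{S \ge t} \ge \Prob{Z(0) \ge t} \sim \exp(-(1-\kappa)t^2/2)$ up to polynomial factors, so no larger constant in the exponent is possible.

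**The variance bound.** By the Gaussian Poincaré inequality (or directly, since $S = \sup_t Z(t)$ is a $1$-Lipschitz functional of the i.i.d.\ standard normal family $\{N_\vartheta\}$ scaled by the $\ell^2$-summable weights $(\tfrac12)^{(1-\alpha/2)|\vartheta|}$ — whose squared $\ell^2$-mass summed over a dyadic level is $\kappa^{|\vartheta|}$), one gets $\Var(S) \le \sigma_*^2 = 1/(1-\kappa)$. More carefully: writing $Z(t) = \sum_\vartheta a_\vartheta(t) N_\vartheta$ with $\sum_\vartheta a_\vartheta(t)^2 = 1/(1-\kappa)$ for all $t$, the map $(n_\vartheta) \mapsto \sup_t \sum a_\vartheta(t) n_\vartheta$ is Lipschitz in $\ell^2$ with constant $\sqrt{\sup_t \sum a_\vartheta(t)^2} = 1/\sqrt{1-\kappa}$, and the Gaussian Poincaré inequality then yields the claim.

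**The bounds on $\E{S}$.** The upper bound $\E{S} \le \sqrt{2/(1-2\kappa)}$ should come from a chaining / Dudley-type estimate, or more cheaply from a union-bound-free argument: the clean route is via the fixed-point identity \eqref{fix:sup1}. Taking expectations in $S \stackrel{d}{=} (\kappa^{1/2}S_0 + N)\vee(\kappa^{1/2}S_1 - N)$ and using $x\vee y \le \tfrac{x+y}{2} + \tfrac{|x-y|}{2}$ with $\E{S_0}=\E{S_1}=\E{S}$ gives $\E{S} \le \kappa^{1/2}\E{S} + \tfrac12\E{|\kappa^{1/2}(S_0-S_1) - 2N|}$; bounding the last expectation by the square root of the second moment $\kappa\,\Var(S_0 - S_1) + 4$... — this needs $\Var(S)$, and one would solve the resulting inequality. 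Alternatively, and probably what the authors do: $\E{S} \le \sqrt{\E{S^2}}$ and then bound $\E{S^2}$ via the fixed point, or use $\E{e^{\lambda S}}$. In any case the constant $1-2\kappa$ (note $\kappa < 1/2$ since $\alpha \le 1/2$ forces $\kappa = 2^{\alpha-2} \le 1/2$) suggests optimizing $\E{S} \le \lambda^{-1}\log\E{e^{\lambda S}}$ with the log-MGF controlled recursively through \eqref{fix:sup1}: $\E{e^{\lambda S}} \le \E{e^{\lambda \kappa^{1/2} S_0 + \lambda N}} + \E{e^{\lambda\kappa^{1/2}S_1 - \lambda N}} = 2 e^{\lambda^2/2}\E{e^{\lambda\kappa^{1/2}S}}$... this does not obviously close, so I expect instead a direct second-moment bound $\E{S^2} \le 2/(1-2\kappa)$ proved by iterating \eqref{fix:sup1}: $\E{S^2} = \E{((\kappa^{1/2}S_0 + N)\vee(\kappa^{1/2}S_1-N))^2} \le \E{(\kappa^{1/2}S_0+N)^2} + \E{(\kappa^{1/2}S_1 - N)^2} = 2(\kappa\E{S^2} + 1)$, using $(a\vee b)^2 \le a^2 + b^2$ when... actually $(a\vee b)^2 \le a^2 + b^2$ is false in general but $(a \vee b)^2 \le a^2 \vee b^2 \le a^2 + b^2$ holds, so $\E{S^2} \le 2\kappa\E{S^2} + 2$, i.e.\ $\E{S^2} \le 2/(1-2\kappa)$ provided $\kappa < 1/2$, giving $\E{S} \le \sqrt{2/(1-2\kappa)}$. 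The lower bound $\E{S} \ge \sqrt{2}/(\sqrt\pi(1-\sqrt\kappa))$: take expectations in \eqref{fix:sup1} and use $x \vee y \ge x$ and $x\vee y \ge \tfrac{x+y}{2} + \tfrac{|x-y|}{2} \ge \kappa^{1/2}\tfrac{S_0+S_1}{2} + |N|$ after noticing the worst case; more precisely $(\kappa^{1/2}S_0 + N)\vee(\kappa^{1/2}S_1 - N) \ge \kappa^{1/2}(S_0 \vee S_1) + |N| \cdot(\dots)$ — one wants $\E{S} \ge \kappa^{1/2}\E{S} + \E{|N|}$ which needs a symmetrization: by independence of $N$ from $(S_0,S_1)$ and symmetry, $\E{(\kappa^{1/2}S_0 + N)\vee(\kappa^{1/2}S_1 - N)} \ge \E{\kappa^{1/2}\tfrac{S_0+S_1}{2} + |N|}$ using $a \vee b \ge \tfrac{a+b}{2}$ with $a = \kappa^{1/2}S_0 + N$, $b = \kappa^{1/2}S_1 - N$ so $\tfrac{a+b}{2} = \kappa^{1/2}\tfrac{S_0+S_1}{2}$ — that only gives $\ge \kappa^{1/2}\E{S}$, losing the $|N|$. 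Instead use $a \vee b \ge \tfrac{a+b}{2} + \tfrac{|a-b|}{2}$ exactly, with $\tfrac{|a-b|}{2} = \tfrac12|\kappa^{1/2}(S_0-S_1) + 2N| = |\kappa^{1/2}\tfrac{S_0-S_1}{2} + N|$, and condition on $(S_0, S_1)$: $\E{|\mu + N|} \ge \E{|N|} = \sqrt{2/\pi}$ for any constant $\mu$. Hence $\E{S} \ge \kappa^{1/2}\E{S} + \sqrt{2/\pi}$, i.e.\ $\E{S} \ge \sqrt{2/\pi}/(1-\sqrt\kappa)$.

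**Numerics and main obstacle.** Finally, for $\alpha = 1/2$ one has $\kappa = 2^{-3/2} = 0.3535\ldots$, so $\sqrt\kappa = 2^{-3/4} = 0.5946\ldots$ and $1-\sqrt\kappa = 0.4054\ldots$, giving the lower bound $\sqrt{2/\pi}/0.4054 = 0.7979/0.4054 = 1.968\ldots$; and $1 - 2\kappa = 1 - 2^{-1/2} = 0.2929\ldots$, giving the upper bound $\sqrt{2/0.2929} = \sqrt{6.828\ldots} = 2.613\ldots$, as stated. The main obstacle is getting the \emph{sharp} constants: the concentration and variance bounds are immediate from Borell--TIS and Gaussian Poincaré once $\sigma_*^2 = 1/(1-\kappa)$ is identified, and the $\E{S}$ bounds require only the careful but elementary manipulations of the fixed-point identity \eqref{fix:sup1} sketched above (the inequalities $(a\vee b)^2 \le a^2 \vee b^2$ and $a \vee b = \tfrac{a+b}{2} + \tfrac{|a-b|}{2}$, plus $\E{|\mu+N|} \ge \E{|N|}$). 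The only mild subtlety is that the second-moment recursion needs $\kappa < 1/2$, which is exactly the hypothesis $\alpha \le 1/2$ (with strict inequality in the relevant constant), so the upper bound genuinely uses the parameter range.
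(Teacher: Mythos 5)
Your proposal is correct, and for the tail bound, the optimality of the exponent, the variance bound, and the upper bound on $\E{S}$ it coincides with the paper's argument: Borell's inequality with $\sigma_*^2=\sup_t\Var(Z(t))=1/(1-\kappa)$, comparison with a single marginal $Z(t)$ for optimality, a Gaussian concentration/Poincar\'e-type bound for $\Var(S)$ (the paper cites Theorem~5.8 of Boucheron--Lugosi--Massart, which is exactly the Lipschitz argument you spell out), and squaring the fixed-point identity \eqref{fix:sup1} together with $(a\vee b)^2\le a^2+b^2$ to get $\E{S^2}\le 2/(1-2\kappa)$. The one place where you genuinely diverge is the lower bound on $\E{S}$. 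The paper works with the series representation \eqref{rn_z_rep}: it builds a (random) time $t_0$ by following the root-to-boundary path that at each node $\vartheta$ turns in the direction making the term $(\I{t_0\in B_{\vartheta0}}-\I{t_0\in B_{\vartheta1}})N_\vartheta$ non-negative, so that $Z(t_0)=\sum_{k\ge0}\kappa^{k/2}|N_{\vartheta_k}|$ and $\E{S}\ge\E{Z(t_0)}=\E{|N|}\sum_{k\ge0}\kappa^{k/2}$. You instead extract the same constant from the max recursion, using the exact identity $a\vee b=\tfrac{a+b}{2}+\tfrac{|a-b|}{2}$ so that $\E{S}=\kappa^{1/2}\E{S}+\E{\bigl|\kappa^{1/2}(S_0-S_1)/2+N\bigr|}\ge\kappa^{1/2}\E{S}+\E{|N|}$ by conditioning on $(S_0,S_1)$ and the fact that $\mu\mapsto\E{|\mu+N|}$ is minimized at $\mu=0$. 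Both routes give $\E{S}\ge\sqrt{2/\pi}/(1-\sqrt{\kappa})$; yours is arguably more self-contained (it never touches the tree representation, only the one-step fixed point), while the paper's greedy-path argument makes transparent \emph{which} configuration of the Gaussians realizes the bound. The exploratory dead ends in your write-up (the MGF recursion, the first attempt at the lower bound) should of course be pruned, but the arguments you finally settle on are all sound.
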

\begin{proof}
From Theorem \ref{rn_gau_pro} we have $\Var(Z(t))=1/(1-\kappa)$ for all $t\in[0,1]$.
The tail bound \eqref{tail_S} now follows from a variant of Borell's inequality, see, e.g.\ Theorem 2.1 in \cite{adler90}. For $t \to \infty$, optimality of the constant in the exponent follows directly by replacing $S$ by $Z(t)$. The corresponding bound on $\Var(S)$ can be deduced from Theorem 5.8 in
\cite{boluma13} since there, the assumption of path continuity can be relaxed to regularity. Both results also apply to $S_n$ for $n \in \N$.

For the lower bound on  $\E{S}$ note that there is a $t_0\in[0,1]$ such that the terms $( \I{t_0 \in B_{\vartheta 0}} - \I{t_0 \in B_{\vartheta 1}}) N_\vartheta$ in (\ref{rn_z_rep}) are non-negative for all $\vartheta \in \Theta$. Hence, we obtain $\E{S}\ge \E{Z(t_0)} = \E{|N|}\sum_{i\ge 0} \kappa^{i/2}$, which is the lower bound.

For the upper bound on $\E{S}$ we take squares and expectations on left and right hand side of  (\ref{fix:sup1}). This implies $\E{S^2}\le 2/(1-2\kappa)$ and we obtain the bound from $\E{S}\le \sqrt{\E{S^2}}$.
\end{proof}

\subsection{Variation of paths}
\label{rn_sec41}
We have already seen that the constant $p_\alpha$ defined in \eqref{const_p} is intimately linked to the limit process $Z$. In this section, we will see that this connection extends to path properties of $Z$, more precisely to its path variation.
To formalize the main results of the section we need some notation. For $t \in (0,1]$, let $\Pi(t)$ be the set of all finite decompositions of the interval $[0,t]$.
 Elements $\pi \in \Pi(t)$ we write as  $\pi = \{\tau_1, \tau_2, \ldots, \tau_{k}\}$ with $0 = \tau_1 < \tau_2 < \ldots < \tau_k = t$. We also denote $|\pi| = k$
the size of $\pi$. Moreover, we abbreviate $\text{mesh}(\pi) = \max_{i=1, \ldots, |\pi|-1} |\tau_{i+1} - \tau_i|$.
For a c\`adl\`ag function $f$ and $p > 0, t \in (0,1]$, we define
$$V_{p,t}(f) := \sup_{\pi \in \Pi(t)} \sum_{i=1,\ldots, |\pi|-1} |f(\tau_{i+1}) - f(\tau_i)|^p,$$
where $V_p(f) := V_{p,1}(f)$. Let $N_f$ be the set of discontinuity points of $f$. Then, we set
$$W_{p,t}(f) :=\sum_{s \in N_f \cap [0,t]} |\Delta f(s)|^p,$$
again with $W_p(f) := W_{p,1}(f)$.
Finally, we set
\begin{align} \label{def:ftp}
[f]_t^{(p)} := \lim_{\pi \in \Pi(t) \atop \text{mesh}(\pi) \to 0} \sum_{i = 1, \ldots,  |\pi|-1} |f(\tau_{i+1}) - f(\tau_i)|^p, \end{align}
if the limit exists in $\R^+_0 \cup \{\infty \}$. The c\`adl\`ag property of $f$ implies that, for any $t \in (0,1]$,
\begin{align} \label{vwz}
 V_{p,t}(f) < \infty \Rightarrow W_{p,t}(f) < \infty \quad \text{and} \quad W_{p,t}(f)= \infty \Rightarrow [f]^{(p)}_t = \infty.
\end{align}
The following lemma is well-known in the case $p=1, q = 2$, we did not find a proof for the general case in the literature. Thus, we include one in the Appendix.
\begin{lem} \label{lem:helppq}
 Let $f \in \Do$, $p >0$ and $V_p(f) < \infty$. Then, for any $q > p$, we have
\begin{align} \label{assertionlemma}
 [f]_t^{(q)} = W_{q,t}(f).
\end{align}
 Additionally, the map $t \mapsto [f]_t^{(q)}$ is c\`adl\`ag with $\Delta [f]_t^{(q)} = |\Delta f(t)|^q$.
\end{lem}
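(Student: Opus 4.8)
The plan is to prove the two claims in sequence: first the identity $[f]_t^{(q)} = W_{q,t}(f)$, then the c\`adl\`ag property of $t \mapsto [f]_t^{(q)}$ together with its jump sizes. For the main identity, I would fix $q > p$ and $t \in (0,1]$ and argue both inequalities. The inequality $W_{q,t}(f) \le \liminf_{\mathrm{mesh}(\pi)\to 0} \sum |f(\tau_{i+1})-f(\tau_i)|^q$ is the easy direction: given finitely many discontinuity points $s_1 < \dots < s_m$ in $[0,t]$, any partition $\pi$ fine enough to separate them contributes, for each $s_j$, a term $|f(\tau_{i+1})-f(\tau_i)|^q$ over the subinterval $(\tau_i,\tau_{i+1}]$ containing $s_j$; by right-continuity and the existence of left limits, as $\mathrm{mesh}(\pi)\to 0$ this term tends to $|\Delta f(s_j)|^q$. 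Summing over $j$ and then letting $m \to \infty$ (exhausting $N_f \cap [0,t]$) gives the bound. Since $V_p(f)<\infty$ forces $W_{p,t}(f)<\infty$ by \eqref{vwz}, and $q>p$, the series $\sum_{s} |\Delta f(s)|^q$ converges, so $W_{q,t}(f)<\infty$.

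The reverse inequality $\limsup_{\mathrm{mesh}(\pi)\to 0} \sum |f(\tau_{i+1})-f(\tau_i)|^q \le W_{q,t}(f)$ is the heart of the matter, and where I expect the main difficulty. The idea is: most of the ``mass'' of $|f(\tau_{i+1})-f(\tau_i)|^q$ must come from subintervals that straddle a large jump; on the remaining subintervals $f$ varies little, and here one exploits the elementary bound $|a|^q \le (\sup_i |f(\tau_{i+1})-f(\tau_i)|)^{q-p} \cdot |a|^p$ together with $\sum |f(\tau_{i+1})-f(\tau_i)|^p \le V_p(f) < \infty$, so that the small-oscillation contribution is at most $(\text{small})^{q-p} V_p(f)$. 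Concretely, fix $\varepsilon > 0$; choose finitely many jump points $s_1,\dots,s_m$ carrying all but $\varepsilon$ of $W_{q,t}(f)$. Since $f$ is c\`adl\`ag, one can find $\delta>0$ such that on $[0,t]$ minus $\delta$-neighbourhoods of $s_1,\dots,s_m$ the oscillation of $f$ over any interval of length $\le \delta$ is $< \varepsilon$ (this uses that a c\`adl\`ag function has only finitely many jumps exceeding any fixed size, so outside small neighbourhoods of finitely many big jumps $f$ is ``uniformly small-oscillation'' at scale $\delta$). Then for $\pi$ with $\mathrm{mesh}(\pi) < \delta$, split the sum into subintervals meeting some $\delta$-neighbourhood of an $s_j$ (there are $O(m)$ of these; each contributes at most $\approx |\Delta f(s_j)|^q + o(1)$ as we refine, with a controlled number of extra terms each of size $O(\varepsilon^q)$) and the rest (contributing at most $\varepsilon^{q-p} V_p(f)$). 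Letting $\mathrm{mesh}(\pi)\to 0$, then $\varepsilon \to 0$, yields the bound. The delicate bookkeeping is in the neighbourhoods of the $s_j$: one must ensure that a bounded number of partition subintervals intersect each neighbourhood and that their $q$-th power increments sum to $|\Delta f(s_j)|^q$ in the limit, which again follows from the existence of one-sided limits at $s_j$. Combining the two directions proves \eqref{assertionlemma}, and in particular shows the limit in \eqref{def:ftp} exists.

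For the last assertion, monotonicity and additivity of $t \mapsto W_{q,t}(f)$ over disjoint intervals is immediate from its definition as a sum over $N_f \cap [0,t]$. Right-continuity at $t$ follows because, for $t_n \downarrow t$, $W_{q,t_n}(f) - W_{q,t}(f) = \sum_{s \in N_f \cap (t,t_n]} |\Delta f(s)|^q$, and since $\sum_{s \in N_f \cap [0,1]} |\Delta f(s)|^q < \infty$ the tail over $(t,t_n]$ goes to $0$ (alternatively: $t \notin N_f$ contributes nothing in the limit, $t \in N_f$ contributes its own jump which is already counted in $W_{q,t}$). For left limits: $\lim_{s \uparrow t} W_{q,s}(f) = \sum_{s' \in N_f \cap [0,t)} |\Delta f(s')|^q = W_{q,t}(f) - |\Delta f(t)|^q$, which gives both the existence of the left limit and $\Delta [f]_t^{(q)} = \Delta W_{q,t}(f) = |\Delta f(t)|^q$ (the last equality being $0$ when $t \notin N_f$, consistent with $\Delta f(t) = 0$). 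This completes the proof.
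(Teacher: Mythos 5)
Your plan follows essentially the same route as the paper's proof: reduce to finitely many jump points carrying all but $\varepsilon$ of $W_{q,t}(f)$, bound the non-straddling increments via the interpolation $|a|^q\le\delta^{q-p}|a|^p$ summed against $V_p(f)<\infty$, and show that the increments over the subintervals straddling the selected jumps reproduce $\sum|\Delta f(\sigma_i)|^q$ (the paper makes this last step quantitative with the elementary inequality $||x+y|^q-|x|^q|\le|y|^q+2^q(|x|^{q-1}|y|+|x|\,|y|^{q-1})$ plus a H\"older-type estimate, whereas you take a soft limit over the fixed finite set of big jumps, which works equally well). The only point to tighten in writing is the bookkeeping near the $s_j$: the clean statement is that sufficiently short subintervals \emph{not containing} any of the finitely many selected jump points have oscillation $<\varepsilon$, so exactly one subinterval per $s_j$ is exceptional, rather than all subintervals meeting a $\delta$-neighbourhood.
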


The following theorem is the main result of this section. Recall the definition of $p_\alpha$ in (\ref{const_p}) and
\begin{align} \label{def:gamma}
 \gamma = \frac{4-2 \kappa}{1-\kappa}.
\end{align}

\begin{thm} \label{thm:grossp}

\begin{enumerate} \item For  $p > p_\alpha$, we have that, almost surely, $V_p(Z) < \infty$ and
$$[Z]_t^{(p)} = W_{p,t}(Z) = \sum_{s \in \Dy \cap [0,t]} |\Delta Z(t)|^p,$$ where the convergence in \eqref{def:ftp} with $f = Z$ also holds with respect to all moments. For the mean, we have
$$\E{[Z]^{(p)}_t} =  \gamma^{p/2} \E{|N|^p} \sum_{i=1}^\infty \kappa^{pi/2}
\left\lfloor 2^{i-1} t + \frac 12 \right\rfloor.
$$
\item Almost surely, for any $t \in (0,1]$, we have $V_{p_\alpha,t}(Z) = W_{p_\alpha, t}(Z) = [Z]_t^{(p_\alpha)} = \infty$. 
\end{enumerate}

\end{thm}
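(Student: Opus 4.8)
<br>

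The plan is to prove the two parts of Theorem~\ref{thm:grossp} using the explicit series representation \eqref{rn_z_rep} of $Z$ together with Lemma~\ref{lem:helppq} and the distributional facts \eqref{dist:delta} and \eqref{rn4}. The key structural observation is that $Z = \sum_{n\ge 0} (Z_{n+1}-Z_n)$, where $Z_{n+1}-Z_n$ is a step function taking the value $\pm(1/2)^{(1-\alpha/2)n} N_{w_{\lfloor j/2\rfloor}}$ on each dyadic interval $[j2^{-(n+1)},(j+1)2^{-(n+1)})$.

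\medskip

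\textbf{Part (i).} First I would establish $V_p(Z) < \infty$ almost surely for $p > p_\alpha$. Note $p > p_\alpha = 2/(2-\alpha)$ is exactly the condition $\kappa^{p/2} = 2^{-(2-\alpha)p/2} < 1/2$, equivalently $2\kappa^{p/2} < 1$. For any partition $\pi$, the subadditivity of $x\mapsto |x|$ through the telescoping $Z = \sum_n (Z_{n+1}-Z_n)$ gives, via Minkowski in $\ell^p$ for $p\ge 1$ (and directly by the triangle inequality / subadditivity of $x \mapsto x^p$ for $p < 1$, which only helps), the bound
\begin{align*}
\Big(\sum_i |Z(\tau_{i+1})-Z(\tau_i)|^p\Big)^{1/p} \le \sum_{n\ge 0} \Big(\sum_i |(Z_{n+1}-Z_n)(\tau_{i+1})-(Z_{n+1}-Z_n)(\tau_i)|^p\Big)^{1/p}.
\end{align*}
Since $Z_{n+1}-Z_n$ is constant on each of the $2^{n+1}$ dyadic intervals of generation $n+1$, any partition sees at most $2^{n+1}$ jumps of it, each of size at most $(1/2)^{(1-\alpha/2)n}\max_{|w|=n}|N_w|$; so the $n$-th inner sum is at most $2^{n+1}\cdot 2^{p} (1/2)^{(1-\alpha/2)pn} (\max_{|w|=n}|N_w|)^p$. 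Hence
$V_p(Z)^{1/p} \le 2\sum_n 2^{(n+1)/p}\kappa^{n/2}\max_{|w|=n}|N_w|$, and since $\max_{|w|=n}|N_w| = O(\sqrt{n})$ a.s.\ (Borel--Cantelli, $2^n$ standard Gaussians), and $2^{1/p}\kappa^{1/2} < 1$, the series converges a.s.; taking $L^p$-norms and using $\|\max_{|w|=n}|N_w|\|_p = O(\sqrt n)$ shows $\E{V_p(Z)} < \infty$. With $V_p(Z)<\infty$ in hand, Lemma~\ref{lem:helppq} (applied with the same $p$, playing the role of $q$ there, against any $p' \in (p_\alpha, p)$ for which $V_{p'}(Z) < \infty$) yields $[Z]_t^{(p)} = W_{p,t}(Z) = \sum_{s\in N_Z\cap[0,t]}|\Delta Z(s)|^p$, and the Corollary after Theorem~\ref{rn_gau_pro} identifies $N_Z = \Dy$ a.s. Convergence of the mesh-limit in all moments follows because the tail $\sum_{n\ge N}(\cdots)$ is dominated in every $L^p$ by a geometrically small quantity, uniformly in the partition. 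For the mean formula: by \eqref{dist:delta}, a point $t\in\Dy_i\setminus\Dy_{i-1}$ contributes $\E{|\Delta Z(t)|^p} = (\gamma\kappa^i)^{p/2}\E{|N|^p}$, and the number of such points in $[0,t]$ is the number of odd multiples of $2^{-i}$ in $[0,t]$, which is $\lfloor 2^{i-1}t + 1/2\rfloor$; summing over $i\ge 1$ gives the stated expression.

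\medskip

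\textbf{Part (ii).} Here $p_\alpha$ satisfies $\kappa^{p_\alpha/2} = 1/2$, so the above geometric series is critical. I would show $W_{p_\alpha,t}(Z) = \infty$ a.s.\ for every $t\in(0,1]$, which by \eqref{vwz} forces both $V_{p_\alpha,t}(Z)=\infty$ and $[Z]_t^{(p_\alpha)}=\infty$ (the latter because $W_{p,t}=\infty \Rightarrow [f]_t^{(p)}=\infty$). Now
\begin{align*}
W_{p_\alpha,t}(Z) = \sum_{s\in\Dy\cap[0,t]}|\Delta Z(s)|^{p_\alpha} = \sum_{i\ge 1}\ \sum_{\substack{s\in\Dy_i\setminus\Dy_{i-1}\\ s\le t}} |\Delta Z(s)|^{p_\alpha}.
\end{align*}
The inner sum over generation $i$ has $\asymp 2^{i}t$ terms (for fixed $t>0$, at least $2^{i-1}t - 1$ of them), each equal in distribution to $(\gamma\kappa^i)^{p_\alpha/2}|N_s|^{p_\alpha} = \gamma^{p_\alpha/2}2^{-i}|N_s|^{p_\alpha}$ since $\kappa^{p_\alpha/2}=1/2$. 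Thus the expected contribution of generation $i$ is $\gamma^{p_\alpha/2}\E{|N|^{p_\alpha}}\cdot 2^{-i}\cdot(\asymp 2^i t) = \Theta(t)$, bounded away from $0$ — so $\E{W_{p_\alpha,t}(Z)} = \infty$. To upgrade from divergence in mean to a.s.\ divergence I would exploit independence: within generation $i$ the jump sizes $|\Delta Z(s)|$, $s\in\Dy_i\setminus\Dy_{i-1}$, are independent (they involve distinct $N_\vartheta$'s with $|\vartheta|=i-1$), and across generations everything is independent; the block sums $\xi_i := \sum_{s\in\Dy_i\setminus\Dy_{i-1},\, s\le t}|\Delta Z(s)|^{p_\alpha}$ are independent, nonnegative, with $\E{\xi_i}$ bounded below by a positive constant and $\V{\xi_i} = O(2^{-i})$ (variance of a sum of $\asymp 2^i$ independent terms each of order $2^{-i}$). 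Hence $\xi_i \to c > 0$ a.s.\ along a subsequence — more directly, since $\sum_i \V{\xi_i} < \infty$ and $\sum_i \E{\xi_i}=\infty$, a Kolmogorov-type argument (e.g.\ the second Borel--Cantelli lemma applied to the events $\{\xi_i > \tfrac12 \E{\xi_i}\}$, whose probabilities are bounded below by Paley--Zygmund) gives that infinitely many $\xi_i$ exceed a fixed positive level a.s., whence $\sum_i \xi_i = \infty$ a.s. Since this holds simultaneously for all rational $t > 0$ and $W_{p_\alpha,t}$ is nondecreasing in $t$, it holds for all $t\in(0,1]$ a.s.

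\medskip

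\textbf{Main obstacle.} The delicate point is the upgrade from $\E{W_{p_\alpha,t}(Z)}=\infty$ to $W_{p_\alpha,t}(Z)=\infty$ almost surely: one must genuinely use the independence structure of the Gaussian family across and within generations, rather than just a first-moment computation. The cleanest route is the Paley--Zygmund / second Borel--Cantelli approach on the independent block sums $\xi_i$, for which the needed input is a matching upper bound on $\E{\xi_i^2}$ (equivalently $\V{\xi_i}=O(2^{-i})$), obtained from the independence of the jump sizes within a generation and the finiteness of $\E{|N|^{2p_\alpha}}$. Part~(i)'s a.s.\ finiteness of $V_p(Z)$ is comparatively routine given the telescoping estimate, and the identification of $[Z]^{(p)}_t$ then just feeds into Lemma~\ref{lem:helppq}.
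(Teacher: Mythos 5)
Part (i) of your proposal is correct and is essentially the paper's argument in a different packaging: you telescope $Z=\sum_n(Z_{n+1}-Z_n)$ and apply Minkowski in $\ell^p$, whereas the paper groups the partition pairs by $j(\tau_i,\tau_{i+1})$ and bounds $|Z(\tau_{i+1})-Z(\tau_i)|$ by the tail $\sum_{\ell\ge j}4\kappa^{\ell/2}\sqrt{\ell}$; both versions rest on the same three facts (at most $O(2^n)$ active pairs per generation, $\max_{|w|=n}|N_w|=O(\sqrt n)$ a.s.\ by Borel--Cantelli, and $2\kappa^{p/2}<1$ iff $p>p_\alpha$), and your reduction to Lemma \ref{lem:helppq} via an intermediate exponent $p'\in(p_\alpha,p)$ is the intended one.

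In part (ii) there is a genuine error in the stated justification. The block sums $\xi_i$ are \emph{not} independent across generations. By \eqref{rn_z_rep}, the jump $\Delta Z(s)$ at $s\in\Dy_i\setminus\Dy_{i-1}$ is not a function of the single $N_\vartheta$ with $|\vartheta|=i-1$ whose interval has midpoint $s$; it also picks up a contribution $\pm\kappa^{m/2}N_\vartheta$ from every node $\vartheta$ with $|\vartheta|=m\ge i$ whose interval $B_\vartheta$ has $s$ as an endpoint (two such nodes at every depth $m\ge i$). Consequently $\Delta Z(s)$ is correlated with $\Delta Z(s')$ whenever $s'$ is a nearest neighbour of $s$ in a deeper generation --- this is precisely the correlation the paper flags (``Note that this does not extend to all $t\in\Dy_n$'') and then controls by an explicit covariance estimate. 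Since the events $\{\xi_i>\tfrac12\E{\xi_i}\}$ are therefore not independent, the second Borel--Cantelli lemma you invoke does not apply.

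The gap is, however, closed by the estimates you already have, and the repair makes your route arguably cleaner than the paper's. Within a fixed generation the relevant index sets of Gaussians \emph{are} pairwise disjoint (two points of $\Dy_i\setminus\Dy_{i-1}$ are at distance at least $2^{-(i-1)}$, so they cannot be the two endpoints of a single $B_\vartheta$ with $|\vartheta|\ge i$), so the within-generation independence and hence your bound $\Var(\xi_i)=O(2^{-i})$ survive. Then $\sum_i\Var(\xi_i)<\infty$ together with Chebyshev and the \emph{first} Borel--Cantelli lemma --- which needs no independence at all --- gives $\xi_i-\E{\xi_i}\to0$ almost surely, whence $\xi_i\to\tfrac t2\gamma^{p_\alpha/2}\E{|N|^{p_\alpha}}>0$ a.s.\ and $\sum_i\xi_i=\infty$. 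With this substitution your blockwise argument avoids the cross-generation covariance computation that the paper needs because it works with the cumulative sums $X_n=\sum_{t\in\Dy_n}|\Delta Z(t)|^{p_\alpha}$ instead of the generation increments.
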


The proof of the theorem makes use of a simple yet useful tool,  well known, e.g.,  from L\'{e}vy's construction of Brownian  motion.
\begin{lem} \label{lembrown}
Let $c > \sqrt{2 \log 2}$. Then, almost surely, there exists a (random) integer $K$ such that for every $k \geq K$, we have
\begin{align*}
 \sup_{\vartheta \in \Theta: |v| = k} |N_{v}| \leq c \cdot k^{1/2}.
 \end{align*}
\end{lem}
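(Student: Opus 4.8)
The plan is to use the explicit representation \eqref{rn_z_rep} of the Gaussian coefficients $N_\vartheta$, which partition naturally by depth $k = |\vartheta|$. First I would fix $c > \sqrt{2 \log 2}$ and estimate, for each level $k$, the probability that the maximum of the $2^k$ independent standard normals at that level exceeds $c \cdot k^{1/2}$. By the standard Gaussian tail bound $\Prob{|N| \geq x} \leq 2 \exp(-x^2/2)$ valid for $x > 0$ (indeed $\Prob{N \geq x} \leq \exp(-x^2/2)$ suffices up to the factor $2$), a union bound over the $2^k$ nodes at depth $k$ gives
\begin{align*}
\Prob{\sup_{\vartheta \in \Theta : |\vartheta| = k} |N_\vartheta| > c \cdot k^{1/2}} \leq 2 \cdot 2^k \exp\left(-\frac{c^2 k}{2}\right) = 2 \exp\left( k \left( \log 2 - \frac{c^2}{2} \right) \right).
\end{align*}
Since $c^2 > 2 \log 2$, the exponent $\log 2 - c^2/2$ is strictly negative, so this bound is summable over $k \geq 1$.

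Next I would invoke the Borel--Cantelli lemma: because $\sum_{k \geq 1} \Prob{\sup_{|\vartheta| = k} |N_\vartheta| > c \cdot k^{1/2}} < \infty$, almost surely only finitely many of these events occur. Hence, almost surely there is a (random) integer $K$ such that for all $k \geq K$ one has $\sup_{\vartheta \in \Theta : |\vartheta| = k} |N_\vartheta| \leq c \cdot k^{1/2}$, which is exactly the assertion. (The statement as printed writes "$|v| = k$" and "$N_v$"; I would keep the notation $\vartheta$ consistent with the rest of the paper, i.e.\ $|\vartheta| = k$ and $N_\vartheta$.)

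This argument is entirely routine and there is no real obstacle; the only point requiring a moment's care is to use a Gaussian tail bound of the precise form that, after multiplication by $2^k$, still decays geometrically — this is what forces the threshold $c > \sqrt{2 \log 2}$ rather than merely $c > 0$. One could also state the bound with the sharper asymptotic $\Prob{N \geq x} \sim \frac{1}{x \sqrt{2\pi}} e^{-x^2/2}$, but the crude exponential bound is amply sufficient and keeps the proof short. No results beyond elementary probability and the representation \eqref{rn_z_rep} are needed.
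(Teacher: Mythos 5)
Your proposal is correct and follows essentially the same route as the paper: a union bound over the $2^k$ nodes at depth $k$, a Gaussian tail estimate giving a summable bound $2^{k+1}e^{-c^2k/2}$ precisely because $c^2>2\log 2$, and the Borel--Cantelli lemma. The only cosmetic difference is that the paper bounds $\Prob{N>ck^{1/2}}$ by inserting the factor $y$ into the tail integral, whereas you quote the standard bound $\Prob{|N|\ge x}\le 2e^{-x^2/2}$; both are valid.
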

\begin{proof}
We have
\begin{align*}
\lefteqn{ \sum_{k = 1}^{\infty} \Prob {\sup_{\vartheta \in \Theta: |v| = k} |N_{v}| > ck^{1/2} } }\\
&\leq \sum_{k = 1}^{\infty} 2^{k+1} \Prob{N > ck^{1/2}}
 \leq  \sum_{k = 1}^{\infty} \frac{2^{k+1}}{\sqrt{2 \pi}}\int_{ck^{1/2}}^{\infty} y e^{-y^2/2} dy
 =   \sum_{k = 1}^{\infty} \frac{2^{k+1}}{\sqrt{2 \pi}} e^{-c^2k/2} < \infty.
\end{align*}
The  Borel--Cantelli Lemma implies the assertion.
\end{proof}
\begin{proof}[Proof of Theorem \ref{thm:grossp}]
The main part of claim $i)$ follows immediately from Lemma \ref{lem:helppq} upon establishing $V_{p}(Z) < \infty$ for $p > p_\alpha$ almost surely.
To prove this,  let $A$ be a set of measure one and $K = K(\omega)$ for $\omega \in A$ such that the statement of Lemma \ref{lembrown} is satisfied with $c=2$ there.
Let $\pi \in \Pi(1)$. Then, for fixed $\omega \in A$,
\begin{align} \label{b0}
 \sum_{i=1}^{\pi-1} |Z(\tau_{i+1}) - Z(\tau_i)|^p = \sum_{i=1, \ldots, \pi-1 \atop j(\tau_{i+1}, \tau_i) < K} |Z(\tau_{i+1}) - Z(\tau_i)|^p +
\sum_{i=1, \ldots, \pi-1 \atop j(\tau_{i+1}, \tau_i) \geq K} |Z(\tau_{i+1}) - Z(\tau_i)|^p
\end{align}
We will show that both terms on the right hand side can be bounded from above independently of the partition $\pi$. This shows the claim $V_{p}(Z) < \infty$.
The first summand is easier. There are at most $2^\ell$ pairs $(\tau_i, \tau_{i+1})$ such that $j(\tau_i, \tau_{i+1}) = \ell$. Thus,
\begin{align} \label{b1}
\sum_{i=1, \ldots, \pi-1 \atop j(\tau_{i+1}, \tau_i) < K} |Z(\tau_{i+1}) - Z(\tau_i)|^p \leq 2^{K +1} \| Z \|^p.
\end{align}
Next, for $j(\tau_i, \tau_{i+1}) \geq K$,
$$
|Z(\tau_{i+1}) - Z(\tau_i)|^p \le \left(\sum_{\ell = j(\tau_i, \tau_{i+1})}^\infty 4 \kappa^{\ell/2}
\sqrt{\ell }\right)^p
\leq (4D)^p \kappa^{j(\tau_{i+1},\tau_i)p/2} j(\tau_{i+1},\tau_i)^{p/2},
$$
where we have abbreviated
\begin{align} \label{defD} D= \sum_{m=0}^\infty \kappa^{m/2} \sqrt{1+m}.
\end{align}
Summation implies
\begin{align*}
 \sum_{i=1, \ldots, \pi-1 \atop j(\tau_{i+1}, \tau_i) \geq K} |Z(\tau_{i+1}) - Z(\tau_i)|^p & \leq
\sum_{j=K}^{\infty} 2^j (4D)^p \kappa^{jp/2} j^{p/2}
\leq (4D)^p \sum_{j = K}^\infty (2 \kappa^{p/2})^j j^{p/2}.
\end{align*}
Since $2 \kappa^{p/2}<1$ the right hand side of the latter display is finite.
Combining the latter display and \eqref{b1}, we obtain the desired upper bound for \eqref{b0}.
For the convergence of moments let $m \in \N$. Then, for $\pi \in \Pi(1)$, we have
\begin{align*}
 \left \|\sum_{i = 0}^{|\pi|-1} |Z(\tau_{i+1}) - Z(\tau_{i})|^{p}\right\|_m &\leq \sum_{i = 0}^{|\pi|-1}
\||Z(\tau_{i+1}) - Z(\tau_{i})|^p\|_m   \leq \gamma^{p/2} \||N|^{p}\|_m \sum_{k=1}^\infty (2\kappa^{p/2})^k
\end{align*}
The result follows as the last bound does not depend on $\pi$.

Regarding the mean of the $p$-variation, abbreviating $\Dy_0 = \emptyset$, we have
\begin{align*}
\E{[Z]^{(p)}_t}  =  \sum_{s \in \Dy \cap [0,t]} \E{| \Delta Z(s)|^p} & = \gamma^{p/2} \E{|N|^p} \sum_{i=1}^\infty  |\Dy_i \backslash \Dy_{i-1} \cap [0,t] | \kappa^{p i /2}\\
& = \gamma^{p/2} \E{|N|^p}
 \sum_{i=1}^\infty \kappa^{pi/2}
\left\lfloor 2^{i-1} t + \frac 12 \right\rfloor
\end{align*}
which finishes the proof of $i)$.

We move on to the proof of $ii)$. Due to \eqref{vwz} it is sufficient to show that, for any $t \in (0,1]$, we have $W_{p_\alpha,t} = \infty$ almost
surely. Again, we restrict our presentation to the case $t =1$. 
As a warm-up we first investigate the case $p < p_\alpha$.
Let $X_n = \sum_{t \in \Dy_n} |\Delta Z(t)|^{p}$ and $X_n' = \sum_{t \in \Dy_n \backslash \Dy_{n-1}}|\Delta Z(t)|^{p}$. Then
$X_n' \leq X_n \uparrow \sum_{t \in \Dy} |\Delta Z(t)|^{p}$ almost surely. The assertion $\sum_{t \in \Dy} |\Delta Z(t)|^{p} = \infty$ almost surely now follows
easily from Chebychev's inequality and the facts that, as $n\to\infty$,
\begin{align*}
& \E{X_n'} = \sum_{t \in \Dy_n \backslash \Dy_{n-1}}\E{|\Delta Z(t)|^{p}} = \frac 1 2 \gamma^{p/2} \E{|N|^p} (2 \kappa^{p/2})^n \rightarrow \infty, \\
& \Var(X_n') = \sum_{t \in \Dy_n \backslash \Dy_{n-1}}\Var(|\Delta Z(t)|^{p}) = \frac 1 2 \gamma^{p} \Var(|N|^p) (2 \kappa^{p})^n = o\left(\E{X_n'}^2 \right).
\end{align*}
Here, we have used that the random variables $\Delta Z(t)$, $t \in \Dy_n \backslash \Dy_{n-1}$ are independent. Note that this does not
extend to all $t \in \Dy_n$.
The situation is more involved for $p = p_\alpha$. Here, the sequence $(\E{X_n'})$ is constant
which implies
$$\E{X_n}= \frac 1 2  \gamma^{1/(2-\alpha)} \E{|N|^{p_\alpha}}\cdot n.$$
Thus, $\E{[Z]^{(p_\alpha)}_1} = \infty$. The assertion now follows from showing that the variance of $X_n$ grows at most linearly. By definition we have
$$\Var( X_n) = \sum_{t\in \Dy_n} \Var\left( |\Delta Z(t)|^{p_\alpha}\right) + \sum_{s,t \in \Dy_n, s \neq t} \Cov\left(|\Delta Z(t)|^{p_\alpha}, |\Delta Z(s)|^{p_\alpha}\right). $$
First, $$\sum_{t\in \Dy_n} \Var\left( |\Delta Z(t)|^{p_\alpha}\right) \leq \gamma^{p_\alpha} \Var\left(|N|^{p_\alpha}\right) \sum_{k = 0}^\infty (2 \kappa^{p_\alpha})^k,$$
where the right hand side does not depend on $n$. 
For $t\in \Dy_i \backslash \Dy_{i-1}$ and $j > i$, $\Delta Z(t)$ is independent of $\Delta Z(s)$ for all $s \in \Dy_j \backslash \Dy_{j-1}$ except for its direct neighbors. Thus, we have
\begin{align*}
 \lefteqn{\sum_{s,t \in \Dy_n, s \neq t} \Cov\left(|\Delta Z(t)|^{p_\alpha}, |\Delta Z(s)|^{p_\alpha}\right)}  \\
& = 2 \sum_{i=1}^{n} \sum_{t \in \Dy_i \backslash \Dy_{i-1}}  \sum_{s \in \Dy_j \backslash \Dy_{j-1}, \atop i<j\leq n} \Cov\left(|\Delta Z(t)|^{p_\alpha}, |\Delta Z(s)|^{p_\alpha}\right) \\
& \leq 4 \sum_{i=1}^{n} \sum_{t \in \Dy_i \backslash \Dy_{i-1}} \sqrt{\Var( |\Delta Z(t)|^{p_\alpha})} \sqrt{\gamma^{p_\alpha} \Var (|N|^{p_\alpha})} \sum_{j >i} \kappa^{j/(3(1-\alpha))} \\
& \leq 4  \gamma^{p_\alpha} \Var \left(|N|^{p_\alpha}\right)  \sum_{j = 0}^\infty \kappa^{j/(3(1-\alpha))}\cdot n.
\end{align*}
The assertion follows.
\end{proof}

\subsection{Binary topology and path continuity}
\label{rn_sec42}
Regarding path continuity of a Gaussian process $X$ on the unit interval, the canonical choice of a metric is given by $d(s,t) := \sqrt{\E{(X(t)-X(s))^2}}$ for $s,t\in[0,1]$. In our case, that is $X = Z$,
identifying $[0,1]$ with $\{0,1\}^\N$ via the binary representations, $d$ induces the product topology on $\{0,1\}^\N$.
A sequence $(x^{(n)})_{n\ge 1}$ where $x^{(n)}=\sum_{i\ge 1} x^{(n)}_i 2^{-i}$ converges to
$x$ with respect to $d$ if and only if for each $k \in \N$ there exists $n_0 \in \N$ such that $x^{(n)}_i = x_i$ for all $i \leq k$ and $n \geq n_0$.
Convergence $d(x^{(n)}, x) \to 0$ implies $|x^{(n)}-x| \to 0$. Conversely,  $|x^{(n)} -x| \to 0$ implies $d(x^{(n)}, x) \to 0$ if and only if either $x \notin \Dy$ or $x \in \Dy$ and additionally $x^{(n)} \geq x$ for almost all $n$. The limit process $Z$ as well as its $p$-variation for $p > p_\alpha$ are almost surely continuous with respect to $d$.

For notational reasons, we work with an (topologically) equivalent metric: for $x,y \in [0,1]$ with binary representations $x=\sum_{i\ge 1} x_i 2^{-i}$, $y=\sum_{i\ge 1} y_i 2^{-i}$  we define 
\begin{align} \label{defmetrickappa}d_\kappa(x,y) := \kappa^{j(x,y)}. \end{align}
Note again that $d_\kappa$ and $d$ depend on $\alpha$ via $\kappa$.
Finally, working with $d$ or more generally, changing the base in \eqref{defmetrickappa} to any value lower than one will only effect absolute constants in the following results.

\medskip The additive construction of $Z$ somewhat resembles L{\'e}vy's construction of Brownian Motion which guides 
both intuition and proofs in the remainder of this section.

\begin{thm}[Modulus of continuity] \label{thm:modulus}
With $\gamma$ as in (\ref{def:gamma})  
we have, almost surely,
$$ \sqrt{\frac{2 \gamma \log 2}{\log(1/\kappa)}} \leq \limsup_{h \downarrow 0} \sup_{t,s \in [0,1], \atop d_\kappa(t,s) = h} \frac{|Z(t) - Z(s)|}{\sqrt{h \log(1/h)}} \leq \frac{2\sqrt{2 \log 2}}{\sqrt{\log (1/\kappa)}(1-\sqrt{\kappa})},$$
where the $\limsup$ is taken over sequences $h \downarrow 0$ with $h=\kappa^{n}$ for some $n \in \N$.
\end{thm}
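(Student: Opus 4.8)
The plan is to exploit the explicit series representation \eqref{rn_z_rep} of $Z$ together with the metric $d_\kappa$ from \eqref{defmetrickappa}, following the template of L\'{e}vy's proof for Brownian motion. Write $Z = \sum_{\ell \ge 0} D_\ell$ where $D_\ell(t) := (1/2)^{(1-\alpha/2)\ell}\bigl(\I{t \in B_{\vartheta 0}} - \I{t \in B_{\vartheta 1}}\bigr) N_\vartheta$ with $|\vartheta| = \ell$ and $t \in B_\vartheta$; note $(1/2)^{2(1-\alpha/2)\ell} = \kappa^\ell$. For the \textbf{upper bound}, fix $h = \kappa^n$ and $s,t$ with $d_\kappa(s,t) = h$, i.e.\ $j(s,t) = n$. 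For levels $\ell < n$ the functions $D_\ell$ are constant on the common dyadic interval containing both $s$ and $t$, so $D_\ell(t) - D_\ell(s) = 0$. For $\ell \ge n$ we bound $|D_\ell(t) - D_\ell(s)| \le 2\kappa^{\ell/2}\sup_{|\vartheta| = \ell}|N_\vartheta|$ and invoke Lemma \ref{lembrown}: for any $c > \sqrt{2\log 2}$, almost surely $\sup_{|\vartheta|=\ell}|N_\vartheta| \le c\sqrt{\ell}$ for all large $\ell$. Summing the geometric-type tail $\sum_{\ell \ge n} 2\kappa^{\ell/2} c\sqrt{\ell}$ and comparing $\sqrt{\ell}$ with $\sqrt{n}$ (the terms decay geometrically so the sum is $\le \frac{2c\sqrt{n}\,\kappa^{n/2}}{1-\sqrt{\kappa}}(1+o(1))$) gives $|Z(t)-Z(s)| \le \frac{2c}{1-\sqrt{\kappa}}\sqrt{n}\,\kappa^{n/2}$. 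Since $h = \kappa^n$ means $n = \log(1/h)/\log(1/\kappa)$ and $\kappa^{n/2} = \sqrt{h}$, this is $\frac{2c}{1-\sqrt{\kappa}}\sqrt{h\log(1/h)/\log(1/\kappa)}$; letting $c \downarrow \sqrt{2\log 2}$ yields the stated upper constant.

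For the \textbf{lower bound}, the strategy is a Borel--Cantelli argument showing that at level $n$ some pair of adjacent dyadic points of scale $2^{-n}$ already realizes nearly the full fluctuation. Pick $s = i2^{-n}$, $t = (i+1)2^{-n}$ for $i$ odd, so $j(s,t) = n$ and $d_\kappa(s,t) = \kappa^n = h$; among the $2^{n-1}$ such pairs the increments $Z(t) - Z(s)$ are governed (to leading order) by the single level-$(n-1)$ Gaussian $N_\vartheta$ with $|\vartheta| = n-1$, which has variance $\gamma\kappa^{n-1}$ by \eqref{dist:delta} — more precisely $Z(t) - Z(s)$ is itself centered Gaussian with variance $\gamma\kappa^n$ by \eqref{rn4}. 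These $2^{n-1}$ increments are \emph{not} independent, but the dominant contributions (the level-$(n-1)$ terms $N_\vartheta$) \emph{are} independent across the pairs, so a standard estimate on the maximum of $2^{n-1}$ i.i.d.\ Gaussians gives $\max \ge \sqrt{\gamma\kappa^n}\cdot\sqrt{2\log 2^{n-1}}(1 - o(1)) = \sqrt{2\gamma\log 2}\sqrt{n\kappa^n}(1-o(1))$, and the lower-order levels $\ell \ge n$ contribute a vanishing fraction by the same estimate used above. Rewriting $\sqrt{n\kappa^n} = \sqrt{h\log(1/h)/\log(1/\kappa)}$ produces the constant $\sqrt{2\gamma\log 2/\log(1/\kappa)}$, and summability of the failure probabilities along $h = \kappa^n$ makes this hold almost surely for infinitely many $n$, hence for the $\limsup$.

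The \textbf{main obstacle} is the dependence structure in the lower bound: although $\Delta Z$ is independent across $\Dy_n \setminus \Dy_{n-1}$ (as noted before Theorem \ref{thm:grossp}), the increment $Z((i+1)2^{-n}) - Z(i2^{-n})$ for fixed $i$ picks up, besides the level-$(n-1)$ jump, a finite-variance tail $\sum_{\ell \ge n} D_\ell$ whose sup over the $2^{n-1}$ pairs must be controlled and shown to be of smaller order than $\sqrt{n\kappa^n}$; this requires the same Lemma \ref{lembrown}-type bound and a careful bookkeeping of which Gaussians are shared between neighboring pairs. A secondary technical point is justifying that restricting the $\limsup$ to $h = \kappa^n$ loses nothing in the upper bound — for general $h \in (\kappa^{n+1},\kappa^n)$ one has $j(s,t) \ge n$ whenever $d_\kappa(s,t) \le h$, so the same level-splitting applies with $n$ replaced by the (larger) value $j(s,t)$, and monotonicity of $x \mapsto \sqrt{x\log(1/x)}$ near $0$ handles the comparison.
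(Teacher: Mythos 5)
Your upper bound is exactly the paper's argument: split $Z(t)-Z(s)$ by levels of the series \eqref{rn_z_rep}, note that all levels $\ell<j(s,t)$ cancel, bound the remaining levels via Lemma \ref{lembrown}, and sum the geometric tail; the constant $2\sqrt{2\log 2}/(\sqrt{\log(1/\kappa)}(1-\sqrt{\kappa}))$ emerges the same way. (One point to make explicit: Lemma \ref{lembrown} controls $\sup_{|\vartheta|=\ell}|N_\vartheta|$ only for $\ell\ge K(\omega)$, so you must restrict to $h=\kappa^L$ with $L\ge K(\omega)$, which is why only a $\limsup$ statement is obtained.)

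The lower bound has a concrete error in the setup. You take $s=i2^{-n}$, $t=(i+1)2^{-n}$ with $i$ \emph{odd} and claim $j(s,t)=n$. This is false: for odd $i$ the pair straddles a dyadic point of lower level, and $j(s,t)$ equals the length of the common binary prefix, which can be as small as $0$ (e.g.\ $i=1$, $n=2$: $s=0.01$, $t=0.10$, so $j(s,t)=0$). Consequently your pairs are not all at the common $d_\kappa$-distance $\kappa^n$, they are not each governed by a single level-$(n-1)$ Gaussian, and the dependence problem you flag as the ``main obstacle'' is largely an artifact of this choice. The correct choice --- the one the paper makes --- is $i$ \emph{even}: $s=2k\cdot 2^{-n}$, $t=(2k+1)2^{-n}$. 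Then $s$ and $t$ share their first $n-1$ digits, the increment $Z(t)-Z(s)$ involves only those $N_\vartheta$ with $\vartheta$ in the subtree rooted at the length-$(n-1)$ word representing $k$, and these subtrees are disjoint for distinct $k$. Hence the $2^{n-1}$ increments are \emph{exactly} independent, each centered Gaussian with variance $\gamma\kappa^{j(s,t)}$ by \eqref{rn4}, and there is no residual tail $\sum_{\ell\ge n}D_\ell$ to control separately. The standard lower estimate for the maximum of independent Gaussians together with Borel--Cantelli (the failure probabilities are bounded by $\exp(-2^{n-1}\Prob{A_{0,n}})$ with $2^{n-1}\Prob{A_{0,n}}\to\infty$ exponentially fast, hence summable) then yields the constant $\sqrt{2\gamma\log 2/\log(1/\kappa)}$. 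With this correction your plan coincides with the paper's proof.
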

\begin{proof}
 We start with the upper bound.
First, let  $\varepsilon > 0$ and $K_1$  be large enough such that $\sum_{j=0}^\infty \kappa^{j/2} \sqrt{1+j/K_1} \leq (1+\varepsilon)/(1-\sqrt{\kappa})$.  Next, let $c > \sqrt{2 \log 2}$ and choose $\omega \in A$ and
$K \geq K_1$ as in Lemma \ref{lembrown}. Let  $h = \kappa^L$ with $L \geq K$. Then, for $t,s \in [0,1]$ with $d_\kappa(t,s) = h$, it follows
\begin{align*}
|Z(t) - Z(s)| & \leq 2c \sum_{j = L}^\infty  \kappa^{j/2} \sqrt{j}  \leq \frac{2c(1+\varepsilon)}{\sqrt{\log (1/\kappa)}(1-\sqrt{\kappa})} \sqrt{h \log (1/h)}.
\end{align*}
Lower bounds follow analogously as for Brownian Motion.
Let $0<v < \sqrt{2 \gamma \log 2}$. For $n \in \N$ and $0 \leq k \leq 2^{n-1}-1$ let
$$A_{k,n} = \{ |Z((2k+1)2^{-n}) - Z(2k\cdot 2^{-n})| > v \sqrt{n} \kappa^{n/2}\}.$$
By construction, for fixed $n \in \N$, the family of events $A_{k,n}, 0 \leq k \leq 2^{n-1}-1$ is independent. Moreover,
\begin{align*}
\Prob{A_{k,n}} = \Prob{|N| \geq \frac{v\sqrt{n}}{ \sqrt{\gamma}} } \ge \Omega\left(\frac{1}{\sqrt{n}}\right)\exp\left(-\frac{v^2 n}{2\gamma}\right).
 \end{align*}
Thus, $2^{n-1} \Prob{A_{k,n}} \to \infty$ as $n \to \infty$. By independence,
$$\Prob{\bigcap_{k=0}^{2^{n-1}-1} A_{k,n}^c} \leq e^{-2^{n-1} \Prob{A_{1,n}}} \to 0.$$
This yields the assertion upon choosing $h = \kappa^n$ for $n$ sufficiently large (and random). 
\end{proof}
Moduli of continuity of the order $\sqrt{h \log(1/h)}$ can also be obtained from general results on Gaussian processes. First, by Theorem 4.6 in \cite{adler90}, which relies on deep results from Talagrand \cite{tala87}, a modulus of continuity is given by
\begin{align*}
\E{\sup_{d_\kappa(s,t) \leq h} (Z_s - Z_t)} & = \sqrt{h} \E{\max \{S_1^*, \ldots, S_{2^n}^*\}}
\end{align*}
where $h = \kappa^n$ and $S_1^*, \ldots, S_{2^n}^*$ are independent random variables, each having the distribution of $\sup_{s,t \in [0,1]} Z(t) - Z(s)$.
An upper bound for  the right hand side in the latter display by use of the bound \eqref{tail_S} leads to a constant
$$\frac{4 \sqrt{2 \log 2}}{\sqrt{1-\kappa}\sqrt{\log 1/\kappa}},$$
which is slightly worse than the upper bound stated in Theorem \ref{thm:modulus}.
Second, the approach towards path continuity relying on the so-called metric entropy of $[0,1]$ with respect to $d_\kappa$ leads to a modulus of continuity of the same order with a random constant, see e.g.\ \cite[Corollary 2.3]{dudley73}.
 \begin{thm} [H{\"o}lder continuity] \label{rn_thm_ho} For any $\beta < 1/2$, almost surely, the paths of $Z$ are H{\"o}lder continuous with exponent $\beta$ with respect to $d_\kappa$.
 For any $\beta > 1/2$, almost surely, the paths of $Z$ are
 nowhere pointwise H{\"o}lder continuous with exponent $\beta$ with respect to $d_\kappa$.
\end{thm}
\begin{proof}
The result for $\beta < 1/2$ follows immediately from the upper bound on the modulus of continuity. Thus, we consider the case $\beta > 1/2$.
We only treat the interval $[0,1)$, the proof for $t=1$ being easier. We adopt the proof of the corresponding statement for the Brownian Motion from \cite{mope}, Theorem 1.30. As explained there, it is sufficient to show that, for any $M> 0$, the event
$$A = \left \{ \exists t \in [0,1), \varepsilon > 0 : \sup_{s \in [t,1], d_\kappa(t,s) < \varepsilon} |Z(s) - Z(t)| \leq M d_\kappa(t,s)^\beta \right\}$$
is a null event. We fix an integer $L > 4$ whose precise value will be specified later. For any $n > 3L$ let
$$R_n = \{ 0 \leq k \leq 2^n - 3L : d_\kappa( (k+3L)2^{-n}, k 2^{-n}) \leq \kappa^{n-L} \}.$$
For $t \in [0,1)$ and $n \in \N$, let $k_n(t) = \lfloor 2^n t \rfloor \in \N$
which satisfies $k_n(t)2^{-n} \leq t < (k_n(t) +1)2^{-n}$. Then, with $t=(t_1, t_2, \ldots)$, choose $m \in \N$ with $t_m = 0$ and set $n = n(m) = m + \lceil \log_2 3L \rceil$.
(Note that there are infinitely many $m$ with this property since $t\ne 1$.)
Then, $d_\kappa( (k_n(t)+3L)2^{-n}, k_n(t) 2^{-n}) \leq \kappa^{n - \lceil \log_2 3L \rceil}$. Hence, as $t \neq 1$, we have $k_n(t) \in R_n$  for infinitely many $n$.
Moreover,  for $k \in R_n$, we also have
$d_\kappa((k+x)2^{-n}, k 2^{-n}) \leq \kappa^{n-L}$ for $0 \leq x \leq 3L$ by monotonicity. Next, let
$$S_{n,k} =\{ |Z((k+3i)2^{-n}) - Z((k+3i-1)2^{-n})| \leq 2 M  \kappa^{(n-L)\beta} \: \forall \: 1 \leq i \leq L\}, \quad S_n = \bigcup_{k \in R_n}S_{n,k}. $$
Assume that $\omega \in A$ and $(t_0, \varepsilon_0) = (t_0(\omega), \varepsilon_0(\omega))$ satisfies the statement in the event $A$. 
Then, if $k_n(t_0) \in R_n$ and $n > 3L$ is large enough such that $\kappa^{n-L} < \varepsilon$, we infer
\begin{align*}
& |Z((k_n(t_0)+3i)2^{-n}) - Z((k_n(t_0)+3i-1)2^{-n})| \\ & \leq |Z((k_n(t_0)+3i)2^{-n}) - Z(t_0)| + |Z((k_n(t_0)+3i-1)2^{-n}) - Z(t_0)| \\
& \leq M (d_\kappa(t_0,(k_n(t_0)+3i)2^{-n})^\beta + d_\kappa(t_0,(k_n(t_0)+3i-1)2^{-n})^\beta) \\
&  \leq M (d_\kappa(k_n(t_0)2^{-n},(k_n(t_0)+3i)2^{-n})^\beta + d_\kappa(k_n(t_0)2^{-n},(k_n(t_0)+3i-1)2^{-n})^\beta) \leq 2M \kappa^{(n-L)\beta}
\end{align*}
for all $1 \leq i \leq L$.  Hence, $\omega \in S_{n,k_n(t_0)}$. As $k_n(t_0) \in R_n$ for infinitely many $n$, we can deduce that also $\omega \in S_n$ for infinitely many $n$, that is $A \subseteq \liminf S_n$. We finish the proof by showing that $\Prob{\liminf S_n} = 0$. For $k \in R_n$,  we have
\begin{align*}
& \Prob{S_{n,k}} = \Prob{|Z((k+3i)2^{-n}) - Z((k+3i-1)2^{-n})|  \leq 2 M  \kappa^{(n-L)\beta}  \: \forall \:1 \leq i \leq L} \\ & = \prod_{i=1}^L \Prob{\gamma^{1/2} (d_\kappa((k+3i)2^{-n}, (k+3i-1)2^{-n}))^{1/2} | N| \leq
 2 M \kappa^{(n-L)\beta}} \\
 & \leq \left( \Prob{\gamma^{1/2} \kappa^{(n-L)/2} |N|\leq 2 M \kappa^{(n-L)\beta}}\right)^{L}
\end{align*}
As the density of $|N|$ is bounded by 2, we have
$$\Prob{S_{n,k}} \leq (4M \kappa^{-L(\beta-1/2)} \gamma^{-1/2})^{L} \kappa^{nL(\beta-1/2)}.$$
Hence, as $|R_n| \leq 2^n$, by an application of the union bound, we see that the sequence $\Prob{S_n}$ is summable upon choosing $L>\max (4, 2/((2-\alpha)(2\beta-1)))$. Thus, $\Prob{\liminf S_n} = 0$ as desired.
\end{proof}

\section{Appendix}

\subsection{Refined information on the mean $\E{X^{(2)}_n(\ell)}$ and $\E{X^{(3)}_n(\ell)}$}
We denote
\begin{align}\label{rn_def_cn}
c_n^{(2)}(\ell) := \E{X^{(2)}_n(\ell)},\quad
c_n^{(3)}(\ell) := \E{X_n^{(3)}(\ell)}, \quad n\ge 1, 1\le \ell \le n.
\end{align}
The following result is sufficient to handle the difference between $2$- and $3$-version of the algorithm. Again, we assume $c = 1$.
\begin{lem} \label{lem3}
For $c_n^{(2)}$ and $c_n^{(3)}$ defined in (\ref{rn_def_cn}) we have $c_n^{(3)}(\ell) \leq c_n^{(2)}(\ell)$ for all $1\le \ell \le n$ and,
as  $n \to \infty$,
\begin{align*}
 \sup_{1\le \ell\le n} c_n^{(2)}(\ell) = \bo(n),\quad
0\le \sup_{1\le \ell\le n}(c_n^{(2)}(\ell) - c_n^{(3)}(\ell)) = \bo\left(n^\alpha \sqrt{\log n} \right).
\end{align*}
\end{lem}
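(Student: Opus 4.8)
The plan is to pass from the stochastic recurrences (\ref{rec2ver}) and (\ref{rec3ver}) to deterministic recurrences for the means by taking expectations and conditioning on $I_n$. Writing $\phi_n:=n-k(n)+\Ec{T_n}$, which satisfies $\phi_n=n+\bo(n^\alpha)$ by (\ref{rn_con_med}) with $p=1$, one obtains for $n\ge n_0$
\[
c_n^{(2)}(\ell)=\phi_n+\Ec{\I{\ell\le I_n}\,c_{I_n}^{(2)}(\ell)+\I{\ell> I_n}\,c_{n-I_n}^{(2)}(\ell-I_n)},
\]
\[
c_n^{(3)}(\ell)=\phi_n+\Ec{\I{\ell< I_n}\,c_{I_n-1}^{(3)}(\ell)+\I{\ell> I_n}\,c_{n-I_n}^{(3)}(\ell-I_n)},
\]
the only structural difference being that on a left turn the $3$-version recurses on size $I_n-1$ rather than $I_n$ and that the $3$-version performs no recursive call when $\ell=I_n$. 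For $\sup_\ell c_n^{(2)}(\ell)=\bo(n)$ I would induct with the monotone majorant $a_n:=\max_{m\le n}\sup_\ell c_m^{(2)}(\ell)$: the first recurrence gives $\sup_\ell c_n^{(2)}(\ell)\le\Ec{a_{\max(I_n,n-I_n)}}+\phi_n$, and since $\max(I_n,n-I_n)=n/2+|I_n-n/2|$ with $\Ec{|I_n-n/2|}=\bo(n^{1-\alpha/2})$ by Lemma \ref{lem:split}, the ansatz $a_m\le Cm$ closes (the recursive part contributes $C(n/2+\bo(n^{1-\alpha/2}))$, and the factor $1/2<1$ leaves room for the linear toll). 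For $c_n^{(3)}(\ell)\le c_n^{(2)}(\ell)$, valid for all $n$, I would run both versions on the same data with coupled pivots: the $3$-version halts at the first pivot of rank $\ell$ while the $2$-version carries on, and until then the $2$-version merely also drags along the maximal ``old pivot'' elements produced by its left turns --- which, because $k(\cdot)\ge3$, never become pivots and only add comparisons --- so it performs at least the comparisons of the $3$-version.

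For the quantitative bound, set $\Delta_n(\ell):=c_n^{(2)}(\ell)-c_n^{(3)}(\ell)\ge0$ and $D_n:=\sup_\ell\Delta_n(\ell)$ with monotone majorant $\bar D_n:=\max_{m\le n}D_m$. Subtracting the recurrences,
\[
\Delta_n(\ell)=\Prob{I_n=\ell}\,c_\ell^{(2)}(\ell)+\Ec{\I{\ell< I_n}\big(c_{I_n}^{(2)}(\ell)-c_{I_n-1}^{(3)}(\ell)\big)}+\Ec{\I{\ell> I_n}\,\Delta_{n-I_n}(\ell-I_n)}.
\]
I would bound the three terms. First, $\Prob{I_n=\ell}\le\max_i\Prob{I_n=i}=\bo(n^{\alpha/2-1})$: from $\Law(I_n)=\Law(\tfrac{k+1}{2}+\text{Bin}(n-k,M_n))$ and the identity $\int_0^1\Prob{\text{Bin}(N,p)=j}\,dp=\tfrac1{N+1}$ one gets $\Prob{I_n=i}\le\|f_{M_n}\|_\infty/(n-k+1)$, and $\|f_{M_n}\|_\infty=f_{M_n}(1/2)=\Theta(\sqrt k)=\Theta(n^{\alpha/2})$ by Stirling; with $c_\ell^{(2)}(\ell)=\bo(n)$ this term is $\bo(n^{\alpha/2})$. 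Second, writing $c_{I_n}^{(2)}(\ell)-c_{I_n-1}^{(3)}(\ell)=\Delta_{I_n}(\ell)+\big(c_{I_n}^{(3)}(\ell)-c_{I_n-1}^{(3)}(\ell)\big)$ and invoking the increment bound $\sup_\ell\big(c_m^{(3)}(\ell)-c_{m-1}^{(3)}(\ell)\big)=\bo(m^\alpha)$ established below, this term is at most $\Ec{\I{\ell<I_n}\Delta_{I_n}(\ell)}+\bo(n^\alpha)$. Third, as at most one of $\{\ell<I_n\}$, $\{\ell>I_n\}$ occurs, the last two contributions together are at most $\Ec{\max(\bar D_{I_n},\bar D_{n-I_n})}+\bo(n^\alpha)+\bo(n^{\alpha/2})=\Ec{\bar D_{\max(I_n,n-I_n)}}+\bo(n^\alpha)$. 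Taking the supremum over $\ell$ gives $D_n\le\Ec{\bar D_{\max(I_n,n-I_n)}}+\bo(n^\alpha)$, and since $\max(I_n,n-I_n)=n/2+|I_n-n/2|$, the ansatz $\bar D_m\le A\,m^\alpha\sqrt{\log m}$ closes the induction: by concavity of $x\mapsto x^\alpha$ and $\Ec{|I_n-n/2|}=\bo(n^{1-\alpha/2})$ the main term is $\le 2^{-\alpha}A\,n^\alpha\sqrt{\log n}\,(1+o(1))$, and since $2^{-\alpha}<1$ while $\sqrt{\log n}\to\infty$, the additive $\bo(n^\alpha)$ is absorbed for $A$ and $n$ large, the finitely many remaining $n$ being handled by enlarging $A$. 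This yields $\sup_\ell\Delta_n(\ell)=\bo(n^\alpha\sqrt{\log n})$.

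It remains --- and this is the crux --- to prove $\sup_\ell\big(c_m^{(3)}(\ell)-c_{m-1}^{(3)}(\ell)\big)=\bo(m^\alpha)$. I would obtain it from a coupling: run the $3$-version on a uniform size-$m$ data set $S$ and on $S$ with its maximum $s_m$ removed (a uniform size-$(m-1)$ set whose rank-$\ell$ element, for $\ell\le m-1$, coincides with that of $S$), using the same pivots whenever $s_m$ is not involved. Being the global maximum, $s_m$ is never the sought element and, because $k(\cdot)\ge3$, is never the median of a sub-presample; it is merely compared once per recursion level while it lies in the current sublist and is always routed into the right sublist; and, being maximal, its presence in a presample does not alter that presample's median, so the pivots of the two runs can genuinely be coupled. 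Hence the excess cost of the larger run is at most one comparison per level $s_m$ survives --- at most the number of recursive calls, of expectation $\bo(\log m)$ for median-of-$k$ with $k\to\infty$ --- plus the discrepancy in the median-selection cost $T$ over the $\bo(\log m)$ levels at which $s_m$ enters a presample or at which $k(s)$ and $k(s-1)$ differ (they differ by $0$ or $2$); the latter is $\bo\big(\sum_j(\|T_{s_j}\|_1+\|T_{s_j-1}\|_1)\big)=\bo\big(\sum_j s_j^\alpha\big)=\bo(m^\alpha)$ in expectation by (\ref{rn_con_med}), the sublist sizes $s_j$ decaying geometrically in expectation. Both estimates are uniform in $\ell$, giving $\bo(\log m)+\bo(m^\alpha)=\bo(m^\alpha)$. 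The genuinely delicate part of the whole argument is the bookkeeping in this coupling --- in particular reconciling $k(s)$ with $k(s-1)$ and coupling the random median-selection cost $T$; the contraction argument of the second paragraph and the hit-probability estimate are routine once the increment bound is in hand.
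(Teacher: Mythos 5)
Your reduction to the mean recurrences, the $\bo(n)$ bound, the hit--probability estimate $\sup_i\Prob{I_n=i}=\bo(n^{\alpha/2-1})$, and the contraction step are all sound, and the first two essentially coincide with the paper's argument. The divergence is in how the index mismatch between $c^{(2)}_{I_n}(\ell)$ and $c^{(3)}_{I_n-1}(\ell)$ is absorbed. The paper shifts the index in the \emph{probability weights} rather than in the cost: after reindexing, the extra term is $\sum_{i\ge\ell}|\Prob{I_n=i+1}-\Prob{I_n=i}|\,c_i^{(3)}(\ell)$, and everything reduces to the analytic estimate $\Prob{I_n=i+1}-\Prob{I_n=i}=\bo(n^{\alpha-2}\sqrt{\log n})$ uniformly in $i$, proved from the explicit ratio of the weights \eqref{weights} together with Stirling and Bernstein-type concentration; this is where the $\sqrt{\log n}$ actually originates. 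You instead shift the index in the cost and therefore need the increment bound $\sup_\ell\bigl(c^{(3)}_m(\ell)-c^{(3)}_{m-1}(\ell)\bigr)=\bo(m^\alpha)$, on which your whole quantitative argument rests.

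That increment bound is where there is a genuine gap. First, the claim that the maximum's ``presence in a presample does not alter that presample's median'' is false: under any natural coupling of a $k$-subset of $S$ with a $k$-subset of $S\setminus\{\max\}$, when the maximum is in the presample (probability $k/m=\Theta(m^{\alpha-1})$) the median shifts to an adjacent order statistic with probability about $1/2$, so the pivots of the two runs do \emph{not} coincide. This defect is repairable (a pivot mismatch costs $\bo(m)$ in conditional expectation, contributing $\bo(m^\alpha)$, and the argument can be closed by induction on $m$). The second defect is not merely bookkeeping: when $k(m)=k(m-1)+2$, the two runs draw presamples of \emph{different sizes}, and the medians of a $k$-sample and a $(k+2)$-sample of essentially the same population have ranks differing typically by $\Theta(m/\sqrt{k})=\Theta(m^{1-\alpha/2})$, not by $\bo(1)$. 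The coupled runs then recurse on sublists whose sizes differ by $\Theta(m^{1-\alpha/2})$, and controlling that requires exactly the uniform-in-$\ell$ stability of $m\mapsto c^{(3)}_m(\ell)$ you are trying to establish --- iterating your own increment bound across such a gap gives $\bo(m^{1+\alpha/2})$, which is useless. Such exceptional $m$ have density $\Theta(m^{\alpha-1})$ near $n/2$, where $\Prob{I_n=i}=\Theta(n^{\alpha/2-1})$, so they cannot be discarded by a crude $\bo(n)$ bound either. As it stands the key lemma of your proof is unproved; the paper's route through the discrete derivative of the splitting distribution avoids the issue entirely.
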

The proof of the lemma makes use of a tail bound for the distribution of $M_n$ given in Lemma \ref{lem:binbeta}. It relies on standard concentration results for sums of independent
random variables. The following simplified version of Bernstein's inequality, see e.g., Theorem 2.8 in \cite{boluma13} is sufficient: For a sequence of independent random variables $X_1, \ldots, X_n$ with $0 \leq X_i \leq 1$ for all $i =1, \ldots, n$, we have
\begin{align}\Prob{\left|\sum_{i=1}^n  X_i - \E{X_i} \right| \geq t} \leq 2 \exp \left( - \frac{t^2}{2 \sum_{i=1}^n \E{X_i} + 2t/3} \right), \label{bernstein}
\end{align}
for all $t > 0$ and $n \in \N$.
\begin{lem} \label{lem:binbeta}
Let $k  \sim n^\alpha$ be odd. There exists a constant $C>0$ such that for all  $y > 0$ and $n\ge n_0$ we have
$$\Prob{M_n - \frac 1 2  > yn^{-\alpha/2}} \le C\exp\left(-\frac{y^2}{4}\right).$$
\end{lem}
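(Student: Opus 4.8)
The plan is to turn the tail event for $M_n$ into a binomial tail via the Beta--Binomial identity \eqref{verb.binbeta} and then apply Bernstein's inequality \eqref{bernstein}. First, if $yn^{-\alpha/2}\geq 1/2$, then $\{M_n-1/2>yn^{-\alpha/2}\}\subseteq\{M_n>1\}$ is a null event, since $M_n$ has the $\text{Beta}((k+1)/2,(k+1)/2)$ distribution and hence admits a density; in this range the asserted bound holds trivially with any $C\geq 2$. So I may assume $0<y<n^{\alpha/2}/2$ and set $x:=1/2+yn^{-\alpha/2}\in(1/2,1)$. Since $k$ is odd, $(k+1)/2\in\N$, and \eqref{verb.binbeta} with $a=b=(k+1)/2$ together with the continuity of $M_n$ gives
$$\Prob{M_n>x}=1-\Prob{\text{Bin}(k,x)\geq\tfrac{k+1}{2}}=\Prob{\text{Bin}(k,x)\leq\tfrac{k-1}{2}}.$$
Complementing each Bernoulli trial, $\text{Bin}(k,x)\stackrel{d}{=}k-\text{Bin}(k,1-x)$, so the last probability equals $\Prob{\text{Bin}(k,p)\geq(k+1)/2}$ with $p:=1-x=1/2-yn^{-\alpha/2}\in(0,1/2)$.

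Next I would write $\text{Bin}(k,p)=\sum_{i=1}^k X_i$ with $X_i$ i.i.d.\ Bernoulli$(p)$, note $\sum_{i=1}^k\E{X_i}=kp$, and observe that the event in question is $\{\sum_{i=1}^k X_i-kp\geq t\}$ with $t:=\tfrac{k+1}{2}-kp=\tfrac12+kyn^{-\alpha/2}>0$. Applying \eqref{bernstein},
$$\Prob{M_n>x}\leq\Prob{\Big|\sum_{i=1}^k X_i-kp\Big|\geq t}\leq 2\exp\!\left(-\frac{t^2}{2kp+2t/3}\right).$$
For the denominator one computes $2kp+2t/3=k+\tfrac13-\tfrac43 kyn^{-\alpha/2}\leq k+\tfrac13$, and for the numerator $t^2\geq(kyn^{-\alpha/2})^2=k^2y^2n^{-\alpha}$, so the exponent is at least
$$\frac{k^2y^2n^{-\alpha}}{k+1/3}=\frac{(kn^{-\alpha})\,y^2}{1+1/(3k)}.$$
Since $k=k(n)\sim n^\alpha$ and $k\geq 3$ for $n\geq n_0$, we have $kn^{-\alpha}\to 1$; after enlarging $n_0$ if necessary so that $kn^{-\alpha}\geq 3/4$ and $k\geq 4$, the last expression is at least $\tfrac{(3/4)y^2}{13/12}=\tfrac{9}{13}y^2\geq y^2/4$. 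This yields the lemma with $C=2$.

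I do not expect a genuine obstacle here; the computation is routine once the identity \eqref{verb.binbeta} is invoked. The one point that needs a little care is to complement the binomial and work with $p=1-x<1/2$ rather than with $x>1/2$, so that the variance proxy $2kp\leq k$ in the denominator of Bernstein's bound stays small enough to keep the constant in the exponent comfortably above $1/4$ (any fixed constant below $1$ would in fact suffice, given the slack between the target $y^2/4$ and the true Gaussian rate $\approx 2y^2$); and to dispose first of the range $y\geq n^{\alpha/2}/2$, where the probability is simply zero.
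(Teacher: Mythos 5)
Your proof is correct and follows essentially the same route as the paper's: convert the tail of $M_n$ into a binomial tail via the Beta--Binomial identity \eqref{verb.binbeta} and then apply Bernstein's inequality \eqref{bernstein}. The only (harmless) difference is that you complement the binomial to work with success probability $p=1-x<1/2$, which gives the cleaner variance proxy $2kp+2t/3\le k+1/3$ and an explicit constant $9/13>1/4$ in the exponent, whereas the paper applies Bernstein directly to $\mathrm{Bin}(k,x)$ with $x>1/2$ and leaves the final constant-chasing implicit.
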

\begin{proof}
Using the connection \eqref{verb.binbeta} with $x = 1/2 + y n^{-1/4}$ and $Y_n = \text{Bin}(k,x)$ we infer
$$\Prob{M_n - \frac 1 2  > yn^{-\alpha/2}} \leq \Prob{Y_n - \E{Y_n} \leq \frac 1 2 - k yn^{-\alpha/2}}.$$ We may assume that
$1 \leq y \leq n^{\alpha/2}$. Using Bernstein's inequality \eqref{bernstein}, we can deduce that for all $n$ sufficiently large,
\begin{align*}
\Prob{M_n - \frac 1 2  > yn^{-1/4}} \leq 2 \exp \left( - \frac{(ky n^{-\alpha/2}- \frac 1 2)^2}{k + 8kyn^{-\alpha/2}/3 -1/3} \right).
 \end{align*}
From here, the result follows easily.
\end{proof}

\begin{proof}[Proof of Lemma \ref{lem3}]
The claim $c_n^{(3)}(\ell) \leq c_n^{(2)}(\ell)$ is clear.
Note that \eqref{rec2ver} implies
\begin{align} & c^{(2)}_n(\ell) =  \sum_{i=\ell }^n \Prob{I_n = i} c^{(2)}_{i}(\ell)  +  \sum_{i= 1}^{\ell-1} \Prob{I_n = i}  c^{(2)}_{n-i}(\ell - i)
+  n-k + \E{T_n}  \label{rec:mean2} \end{align}
for all $n \geq n_0$.
Assuming that $c_i^{(2)}(\ell) \leq C i$ for all $1 \leq \ell \leq i$ and $i \leq n-1$, it  follows that
$$c^{(2)}_n(\ell) \leq  C \E{\max(I_n,n-I_n)} +  n-k + \E{T_n}.$$
Choosing $n$ and $C$ large enough, the right hand side is bounded by $Cn$ as $\E{T_n} = \bo(n^\alpha)$ which proves $\sup_{1\le \ell\le n} c_n^{(2)}(\ell) = \bo(n)$.

Much in the same way as \eqref{rec:mean2} follows from  \eqref{rec2ver}, the following recurrence follows from  \eqref{rec3ver}:
\begin{align*} c^{(3)}_n(\ell) =  \sum_{i=\ell + 1}^n \Prob{I_n = i} c^{(3)}_{i-1}(\ell)  +  \sum_{i= 1}^{\ell-1} \Prob{I_n = i}
 c^{(3)}_{n-i}(\ell - i) + n-k + \E{T_n},
  \end{align*}
for $n \geq n_0$.
We proceed recursively and assume that $|c^{(2)}_i(\ell) - c^{(3)}_i(\ell)|   \leq C  i^\alpha \sqrt{\log i}$ for all $1 \leq \ell \leq i$, $i \leq n-1$. Then, denoting $I^{*}_n = \max(I_n, n-I_n)$,
\begin{align}
|c^{(2)}_n(\ell) - c^{(3)}_n(\ell)| &\leq  \sum_{i = \ell}^{n-1} |\Prob{I_n = i+1}  - \Prob{I_n = i}\!| c_i^{(3)}(\ell) \nonumber\\
 &\quad ~+ \Prob{I_n = n} c_n^{(2)}(\ell) + C \E{(I^*_n)^\alpha \sqrt{\log I^*_n}} \nonumber  \\
& =  \sum_{i = \ell}^{n-1} |\Prob{I_n = i+1}  - \Prob{I_n = i}| c_i^{(3)}(\ell) + C \E{(I^*_n)^\alpha \sqrt{\log I^*_n}} \label{step1}
\end{align}
For now, let us assume (a proof given below) that
\begin{align} \label{splitdiff} \Prob{I_n = i+1}  - \Prob{I_n = i} = \bo\left(n^{\alpha-2} \sqrt{\log n}\right)
\end{align}
uniformly in $1 \leq i \leq n$. By the results obtained so far,  we have
\begin{align*} 
c_n^{(3)}(\ell) \leq  c_n^{(2)}(\ell) =\bo(n)
\end{align*}
uniformly in $1 \leq \ell \leq n$. Using these two bounds, it follows from \eqref{step1} that\begin{align*}
|c^{(2)}_n(\ell) - c^{(3)}_n(\ell)| &\leq  \left(C^* n^\alpha + C \E{(I^*_n)^\alpha} \right)\sqrt{\log n}
\end{align*}
for some universal constant $C^* > 0$. Let $\varepsilon > 0$ be sufficiently small and assume $n$ was chosen large enough such that
$\E{ (I^{*}_m/m)^\alpha} \leq 1- \varepsilon$ for all $m \geq n$. Then $|c^{(2)}_n(\ell) - c^{(3)}_n(\ell)| \leq C n^\alpha \sqrt{\log n}$
follows upon choosing $C \geq C^*/\varepsilon$.\\

It remains to prove \eqref{splitdiff}:
First, observe that we can write
\begin{align}
\Prob{I_n = i+1}  - \Prob{I_n = i} = \Prob{I_n=i} \left(\frac{(k-1) (n-2i)}{(n-i)(2i-(k-1))}\right). \label{diffbesser}
\end{align}
By symmetry, it is enough to consider the case $i \leq \lfloor n/2 \rfloor $. Moreover, again by symmetry, $\Prob{I_n = i}$ is maximal for $i = \lfloor n/2 \rfloor$. An application of Stirling's formula in \eqref{weights} shows that
$$\sup_{1 \leq i \leq n} \Prob{I_n = i} = \Prob{I_n = \lfloor n/2 \rfloor} \sim \sqrt{\frac{2}{\pi}} n^{\alpha/2-1}.$$
In particular, $\sup_{1 \leq i \leq n} \Prob{I_n = i} = \bo(n^{\alpha/2-1})$, which is suggested by the limit law in Lemma \ref{lem:split}. Now,
let $C_1 > 0$ (to be specified later) and $\gamma_n = n/2 - C_1 \sqrt{\log n} n^{1-\alpha/2}$. For $i \geq \gamma_n$
it is easy to see that the second factor on the right hand side of \eqref{diffbesser} is uniformly bounded by a constant multiple of
$\sqrt{\log n} n^{\alpha/2-1}$. Thus,
$$\sup_{\gamma_n \leq i \leq \lfloor n/2 \rfloor } \Prob{I_n = i+1}  - \Prob{I_n = i} = \bo(n^{\alpha-2} \sqrt{\log n}).$$
To treat small values of $i$, consider the event $A_n := \{M_n \geq \frac{1}{2} - C_2 \sqrt{\log n} n^{-\alpha/2}\}$ with $0<C_2<C_1$. We have
\begin{align*}
\Prob{I_n < \gamma_n, A_n} &\leq \Prob{\text{Bin}(n-k,  1/2 - C_2 \sqrt{\log n} n^{-\alpha/2}) < \gamma_n - (k+1)/2} \\
& \leq \Prob{|Y_n - \E{Y_n}| \geq (C_1-C_2) \sqrt{\log n} n^{1-\alpha/2}}
\end{align*}
where $Y_n = \text{Bin}(n-k, 1/2 - C_2 \sqrt{\log n} n^{-\alpha/2})$ and $n$ is assumed to be sufficiently large.
Using Bernstein's inequality \eqref{bernstein}, the latter display is bounded by a multiple of $\exp(- (C_1-C_2)^2 n^{1-\alpha} \log n /2)$.
By Lemma \ref{lem:binbeta}, $\Prob{A_n^c} =  \bo(n^{-C_2^2/4})$ which finally shows that
$$\sup_{1 \leq i \leq \gamma_n } \Prob{I_n = i}  = \bo\left(\max\left\{n^{-C_2^2/4},\exp\left(-\frac{1}{2} (C_1-C_2)^2 n^{1-\alpha} \log n \right)\right\}\right).$$
This finishes the proof of the lemma by choosing, e.g., $C_2 = 3$ and $C_1=4$.  \end{proof}

\subsection{Proof of Lemma \ref{lem:helppq}}
By \eqref{vwz} we have  $\sum_{t \in N_f} |\Delta f(t)|^p =: M < \infty$. Let $\delta, \varepsilon > 0$. There exists a number $N \in \N$ and a
set $N_f' = \{\sigma_i: 1 \leq i \leq N\} \subseteq N_f$ with $0 < \sigma_1 < \ldots < \sigma_N \leq 1$
such that, first,
\begin{align*}
 |f(t)-f(s)| \leq \delta \quad & \text{for all} \:  s,t \in [0,1], 1 \leq i \leq N \:\text{with} \: s,t \in [\sigma_i, \sigma_{i+1}) \\
& \text{and for all} \: s,t \in [0, \sigma_1) \: \text{or} \: s,t \in [\sigma_N, 1],
\end{align*}
and second, $\sum_{t \in N_f \backslash N_f'}
|\Delta f(t)|^q < \varepsilon$. For the remainder of the proof  we consider  $t=1$.
Let $\pi \in \Pi(1)$ with $\text{mesh}(\pi) < \min_{0 \leq i \leq N-1} |\sigma_{i+1} - \sigma_i|$ (where $\sigma_0 := 0$) and, if $\sigma_N < 1$, additionally
$\text{mesh}(\pi) < 1-\sigma_N$. For $\tau = \tau_i \in \pi$ with $1 \leq i \leq |\pi|-1$ let $\tau^* = \tau_{i+1}$ be its successor.
For $1 \leq i \leq N-1$, let $\tau_i^- \in \pi$ be the largest element strictly smaller than $\sigma_i$ and $\tau_i^+ = (\tau_i^-)^*$. Then, we have
\begin{align}
& \left|\sum_{i = 0}^{|\pi|-1} \left|f\left(\tau_{i+1}\right) - f\left(\tau_{i}\right)\right|^q - [f]^{(q)}_1 \right| \nonumber \\
& \leq  \left| \sum_{i=1}^{N} \left|f\left(\tau_i^{+}\right) - f\left(\tau_i^{-}\right)|^q - |\Delta f\left(\sigma_i\right)\right|^q \right|
 \label{firstsum}\\
&\quad~+ \sum_{s \in N_f \backslash N_f'} |\Delta f(s)|^q  + \sum_{i=1}^{N} \sum_{\tau \in \pi \cap [\sigma_i, \sigma_{i+1})
\atop \text{with}\  \tau^* \in (\sigma_i, \sigma_{i+1}]} |f(\tau^*) - f(\tau)|^q \label{secsum}
\end{align}
By definition, the first summand in \eqref{secsum} does not exceed $\varepsilon$. Moreover,
\begin{align*}
   \sum_{i=1}^{N} \sum_{\tau \in \pi \cap [\sigma_i, \sigma_{i+1})
\atop \text{with}  \tau^* \in (\sigma_i, \sigma_{i+1}]} |f(\tau^*) - f(\tau)|^q
  \leq \delta^{q-p} \sum_{i=1}^{N} \sum_{\tau \in \pi \cap [\sigma_i, \sigma_{i+1})
\atop \text{with}  \tau^* \in (\sigma_i, \sigma_{i+1}]} |f(\tau^*) - f(\tau)|^p
 \leq \delta^{q-p} V_p(f).
\end{align*}
To treat the term in \eqref{firstsum}, note
that, for all $x , y \in \R$, we have the elementary inequality $$||x+y|^q - |x|^q| \leq |y|^q + 2^q(|x|^{q-1}|y| + |x| |y|^{q-1}).$$
Applying this inequality to the $i$-th summand of \eqref{firstsum} where $x = \Delta  f(\sigma_i)$ and $y = f(\tau_i^{+}) - f(\sigma_i) + f(\sigma_i-) - f(\tau_i^{-})$, the $i$-th summand is bounded from above by
\begin{align*}
\lefteqn{ |\delta_i^+ + \delta_i^-|^q + 2^q(|\Delta  f(\sigma_i)|^{q-1} |\delta_i^+ + \delta_i^-| + |\Delta  f(\sigma_i)| |\delta_i^+ + \delta_i^-|^{q-1})
} \\ & \leq 2^q (|\delta_i^+|^q + |\delta_i^-|^q) + 2^q(|\Delta  f(\sigma_i)|^{q-1} (|\delta_i^+| + |\delta_i^-|))\\
 &\quad~+ 4^{q} |\Delta  f(\sigma_i)| (|\delta_i^+|^{q-1} + |\delta_i^-|^{q-1}),
\end{align*}
where we have set $\delta_{i}^+ = f(\tau_i^{+}) - f(\sigma_i)$ and $\delta_{i}^- =  f(\sigma_i-)- f(\tau_i^{-}).$ It is now
straightforward to show that the sum (over $1 \leq i \leq N$) of the last display is bounded from above by $C \delta^\alpha$ for some $0 < \alpha  = \alpha(p,q) < 1$ and
$C = C(p,q) > 0$.
This finishes the proof of \eqref{assertionlemma} as $\delta$ and $\varepsilon$ were chosen arbitrarily. Exemplarily, we pick one of the terms. We have
\begin{align*}
\sum_{i=1}^{N} |\Delta  f(\sigma_i)|^{q-1} |\delta_i^+| & \leq \left(\sum_{i=1}^{N} |\Delta  f(\sigma_i)|^{q}\right)^{1-1/q}
\left(\sum_{i=1}^{N} |\delta_i^+|^q \right)^{1/q} \\
& \leq \delta^{1-p/q} \max_{t \in N_f} |\Delta f(t)|^{(q-p)(1-1/q)} M^{1-1/q} V_p^{1/q}(f).
\end{align*}
The regularity of $t \mapsto [f]_t^{(q)}$ and the characterization of its jumps follow immediately.


\begin{thebibliography}{99}

\bibitem{adler90}
Adler, R.J. (1990) An introduction to continuity, extrema, and related topics for general Gaussian processes. Institute of Mathematical Statistics Lecture Notes-Monograph Series, 12. {\em Institute of Mathematical Statistics, Hayward, CA. x+160 pp.}

\bibitem{anbr92}
Anderson, D.H. and Brown, R. (1992)
Combinatorial aspects of C.A.R.~Hoare's FIND algorithm.
{\em Australas. J. Combin.} {\bf 5}, 109--119.

\bibitem{Billingsley1999}
Billingsley, P. (1999) Convergence of probability measures. Second edition.
Wiley Series in Probability and Statistics: Probability and Statistics. A Wiley-Interscience Publication.  {\em John Wiley \& Sons, Inc., New York.}

\bibitem{blsi11}
Blanchet, J.H. and Sigman, K. (2011)
On exact sampling of stochastic perpetuities.
{\em J. Appl. Probab.} {\bf 48A}, 165--182.


\bibitem{boluma13}
Boucheron, S.,  Lugosi, G.~and Massart, P. (2013),
Concentration Inequalities: A Nonasymptotic Theory of Independence.
{\em Oxford University Press}.

\bibitem{dane14}
Dadoun, B.~and Neininger, R. (2014)
A statistical view on exchanges in Quickselect.
{\em Analytic Algorithmics and Combinatorics (ANALCO14)}, to appear.

\bibitem{de84}
Devroye, L. (1984)
Exponential bounds for the running time of a selection algorithm.
{\em J. Comput. System Sci.} {\bf 29}, 1--7.

\bibitem{de01a}
Devroye, L. (2001a)
Simulating perpetuities.
{\em Methodol. Comput. Appl. Probab.} {\bf 3}, 97--115.

\bibitem{de01b}
Devroye, L. (2001b)
On the probabilistic worst-case time of ``find''.
{\em Algorithmica} {\bf 31}, 291--303.

\bibitem{defa10}
Devroye, L. and Fawzi, O. (2010)
Simulating the Dickman distribution.
{\em Statist. Probab. Lett.} {\bf 80}, 242--247.

\bibitem{deja11}
Devroye, L. and James, L. (2011)
The double CFTP method.
{\em ACM Trans. Model. Comput. Simul.} {\bf 21}, 1--20.

\bibitem{DrJaNe08}
Drmota, M., Janson, S. and Neininger, R. (2008)
 A functional limit theorem for the profile of search trees.
{\em Ann. Appl. Probab.} {\bf 18}, 288--333.

\bibitem{dudley73}
Dudley, R.M. (1973)
Sample functions of the Gaussian process.
{\em Ann. Probab.} {\bf 1}, 66--103.


\bibitem{EiRu07}
Eickmeyer, K. and R\"uschendorf, L. (2007)
A limit theorem for recursively defined processes in $L^{p}$.
{\em Statist. Decisions} {\bf 25}, 217--235.

\bibitem{fihu10}
Fill, J.A. and Huber, M.L. (2010)
Perfect simulation of Vervaat perpetuities.
{\em Electron. J. Probab.} {\bf 15}, 96--109.

\bibitem{fina10}
Fill, J.A. and Nakama, T. (2010)
Analysis of the expected number of bit comparisons required by Quickselect.
{\em Algorithmica} {\bf 58}, 730--769.

\bibitem{fina12}
Fill, J.A. and Nakama, T. (2013)
Distributional Convergence for the Number of Symbol Comparisons Used by QuickSelect. {\em Adv. Appl. Probab.} {\bf 45}, 425--450.

\bibitem{grpr08}
Grabner, P. and Prodinger, H. (2008)
On a constant arising in the analysis of bit comparisons in quickselect.
{\em Quaest. Math.} {\bf 31}, 303--306.


\bibitem{gr98}
Gr\"ubel, R. (1998)
Hoare's selection algorithm: a Markov chain approach.
{\em J. Appl. Probab.} {\bf 35}, 36--45.

\bibitem{gr99}
Gr\"ubel, R. (1999)
On the median-of-k version of Hoare's selection algorithm.
{\em Theor. Inform. Appl.} {\bf 33}, 177--192.

\bibitem{grro96}
Gr\"ubel, R. and R\"osler, U. (1996)
Asymptotic distribution theory for Hoare's selection algorithm.
{\em Adv. in Appl. Probab.} {\bf 28}, 252--269.

\bibitem{ho61}
Hoare, C.A.R. (1961)
Algorithm 65, FIND.
{\em Comm. Assoc. Comput. Mach.} {\bf 4}, 321--322.

\bibitem{hwts02}
Hwang, H.-K. and Tsai, T.-H. (2002)
Quickselect and the Dickman function.
{\em Combin. Probab. Comput.} {\bf 11}, 353--371.

\bibitem{kipr98}
Kirschenhofer, P. and Prodinger, H. (1998)
Comparisons in Hoare's Find algorithm.
{\em Combin. Probab. Comput.} {\bf 7}, 111--120.

\bibitem{kiprma97}
Kirschenhofer, P., Prodinger, H. and  Mart{\'{\i}}nez, C.  (1997)
Analysis of Hoare's FIND algorithm with median-of-three partition. {\em Random Structures Algorithms} {\bf  10}, 143--156.

\bibitem{knne08}
Knape, M. and Neininger, R. (2008)
Approximating perpetuities.
{\em Methodol. Comput. Appl. Probab.} {\bf 10}, 507--529.

\bibitem{knne12}
Knape, M. and Neininger, R. (2013)
Appendix to ``Approximating perpetuities''.
{\em Methodol. Comput. Appl. Probab.} {\bf 15}, 707--712.

\bibitem{kn07}
Knof, D.  (2007)
Struktur von Fixpunkten aus Proze\ss gleichungen. Dissertation, Christian-Albrechts-Universit\"at zu Kiel.
Electronically available via urn:nbn:de:gbv:8-diss-23169.




\bibitem{knro12}
Knof, D. and R\"osler, U. (2012)
The analysis of find and versions of it.
{\em Discrete Math. Theor. Comput. Sci.} {\bf 14}, 129--154.

\bibitem{kn72}
Knuth, D.E. (1972)
Mathematical analysis of algorithms. {\em Information processing 71 (Proc. IFIP Congress, Ljubljana, 1971), Vol. 1: Foundations and systems}, pp. 19--27. {\em North-Holland, Amsterdam}.

\bibitem{komo97}
Kodaj, B. and M{\'o}ri, T.F. (1997)
On the number of comparisons in Hoare's algorithm ``FIND''.
{\em Studia Sci. Math. Hungar.} {\bf 33}, 185--207.

\bibitem{ma09}
Mahmoud, H.M. (2009) Average-case Analysis of Moves in Quick Select.
{\em Proceedings of the Sixth Workshop on Analytic Algorithmics and
Combinatorics (ANALCO)}, 35--40.


\bibitem{ma10}
Mahmoud, H.M. (2010)
Distributional analysis of swaps in Quick Select.
{\em Theoret. Comput. Sci.} {\bf 411}, 1763--1769.


\bibitem{momosm95}
Mahmoud, H.M., Modarres, R. and Smythe, R.T. (1995)
Analysis of QUICKSELECT: an algorithm for order statistics.
{\em RAIRO Inform. Th\'eor. Appl.} {\bf 29}, 255--276.

\bibitem{maro01}
Mart{\'{\i}}nez, C. and Roura, S. (2001/02)
Optimal sampling strategies in quicksort and quickselect.
{\em SIAM J. Comput.} {\bf 31}, 683--705.

\bibitem{mapavi10}
Mart{\'{\i}}nez, C., Panario, D. and Viola, A. (2010)
Adaptive sampling strategies for quickselect.
{\em ACM Trans. Algorithms} {\bf 6}, Art.~53, 46 pp.

\bibitem{mope}
M{\"o}rters, P. and Peres, Y. (2010)
Brownian motion.
With an appendix by Oded Schramm and Wendelin Werner. Cambridge Series in Statistical and Probabilistic Mathematics. {\em Cambridge University Press, Cambridge}.

\bibitem{NeRu04}
Neininger, R. and R\"uschendorf, L. (2004)
A general limit theorem for recursive algorithms and combinatorial structures.
{\em Ann. Appl. Probab.} {\bf 14}, 378--418.

\bibitem{neru05}
Neininger, R. and R\"uschendorf, L. (2005)
Analysis of algorithms by the contraction method: additive and max-recursive sequences.
{\em Interacting Stochastic Systems}, 435--450, Springer.

\bibitem{nesu12}
Neininger, R. and Sulzbach, H. (2012)
On a functional contraction method.
Preprint available via
\url{http://arxiv.org/abs/1202.1370}

\bibitem{pa97}
Paulsen, V. (1997)
The moments of FIND.
{\em J. Appl. Probab.}  {\bf 34}, 1079--1082.

\bibitem{ra11}
Ragab, M.  (2011)
Partial Quicksort and weighted branching processes. Dissertation, Christian-Albrechts-Universit\"at zu Kiel.
Electronically available via urn:nbn:de:gbv:8-diss-73701.

\bibitem{raro13}
Ragab, M. and R\"osler, U. (2013)
The Quicksort process. {\em to appear in Stochastic Processes Appl.} Preprint available via
\url{http://arxiv.org/abs/1302.3770}

\bibitem{ro91}
R\"osler, U. (1991)
A limit theorem for ``Quicksort''. {\em RAIRO Inform. Th\'eor. Appl.} {\bf 25}, 85--100.


\bibitem{ro04}
R\"osler, U. (2004)
Quickselect revisited.
{\em J. Iranian Stat. Soc.} {\bf 3}, 271--296.

\bibitem{RoRu01}
R\"osler, U. and R\"uschendorf, L. (2001) The contraction method for recursive
algorithms. {\em Algorithmica} {\bf 29}, 3--33.


\bibitem{Ru06}
R\"uschendorf, L. (2006)  On stochastic recursive equations of sum and max type.
{\em J. Appl. Probab} {\bf 33(3)}, 687--703.

\bibitem{SuDiss}
Sulzbach, H.  (2012)
On a Functional Contraction Method.
Dissertation, Universit\"at Frankfurt.
Electronically available via urn:nbn:de:hebis:30:3-248587.

\bibitem{tala87}
Talagrand, M. (1987) Regularity of Gaussian processes.
{\em Acta Math.} {\bf 159}, 99--149.

\bibitem{vaclfifl09}
Vall{\'e}e, B., Cl{\'e}ment, J., Fill, J.A. and Flajolet, P. (2009)
The number of symbol comparisons in QuickSort and QuickSelect. {\em  Automata, languages and programming. Part I}, 750--763,
Lecture Notes in Comput. Sci., 5555, {\em Springer, Berlin}.

\bibitem{winema13}
Wild, S., Nebel, M.E. and Mahmoud, H.M. (2013)
Analysis of Quickselect under Yaroslavskiy's Dual-Pivoting Algorithm.
Preprint available via {\tt http://arxiv.org/abs/1306.3819}

\end{thebibliography}
\end{document}